\newtheorem{thm}{Theorem}[section]
\newtheorem{lem}[thm]{Lemma}
\newtheorem{prp}[thm]{Proposition}
\newtheorem{cor}[thm]{Corollary}
\newtheoremstyle{roman} 
    {8.0pt plus 2.0pt minus 4.0pt}                    
    {8.0pt plus 2.0pt minus 4.0pt}                    
    {\normalfont}                
    {}                           
    {\bfseries}                  
    {.}                          
    {5pt plus 1pt minus 1pt}     
    {}  
\theoremstyle{roman}
\newtheorem{construction}[thm]{Construction}
\newtheorem{example}[thm]{Example}
\newtheorem{remark}[thm]{Remark}
\theoremstyle{plain}
\newcommand{\Step}[1]{\noindent {\it Step #1.}} 
\newcommand{\rem}[1]{}
\newcommand{\N}{\mathbb{N}}
\newcommand{\Z}{\mathbb{Z}}
\newcommand{\HH}{{\mathrm{H}}}
\newcommand{\fraka}{{\mathfrak{a}}}
\newcommand{\frakb}{{\mathfrak{b}}}
\newcommand{\frakm}{{\mathfrak{m}}}
\newcommand{\frakp}{{\mathfrak{p}}}
\newcommand{\frakq}{{\mathfrak{q}}}
\newcommand{\frakt}{{\mathfrak{t}}}
\newcommand{\calA}{{\mathcal{A}}}
\newcommand{\calB}{{\mathcal{B}}}
\newcommand{\calC}{{\mathcal{C}}}
\newcommand{\calD}{{\mathcal{D}}}
\newcommand{\calH}{{\mathcal{H}}}
\newcommand{\calK}{{\mathcal{K}}}
\newcommand{\calL}{{\mathcal{L}}}
\newcommand{\calM}{{\mathcal{M}}}
\newcommand{\calP}{{\mathcal{P}}}
\newcommand{\calT}{{\mathcal{T}}}
\newcommand{\bfG}{{\mathbf{G}}}
\newcommand{\catC}{{\mathscr{C}}}
\newcommand{\catD}{{\mathscr{D}}}
\newcommand{\what}[1]{\widehat{#1}}
\newcommand{\veps}{\varepsilon}
\newcommand{\vphi}{\varphi}
\newcommand{\onto}{\rightarrow\mathrel{\mkern-14mu}\rightarrow} 
\newcommand{\embeds}{\hookrightarrow}
\newcommand{\suchthat}{\,:\,}
\newcommand{\where}{\,|\,}
\newcommand{\quo}[1]{\overline{#1}}
\DeclareMathOperator{\Cent}{Cent} %
\DeclareMathOperator{\coker}{coker} %
\DeclareMathOperator{\End}{End} %
\DeclareMathOperator{\Ext}{Ext} %
\DeclareMathOperator{\hgt}{hgt} %
\DeclareMathOperator{\Hom}{Hom} %
\DeclareMathOperator{\id}{id} %
\DeclareMathOperator{\im}{im} %
\DeclareMathOperator{\ind}{ind} %
\newcommand{\op}{\mathrm{op}} %
\DeclareMathOperator{\Spec}{Spec} %
\newcommand{\et}{\mathrm{\acute{e}t}}
\newcommand{\units}[1]{{#1^\times}}
\numberwithin{equation}{section} 
\newcommand{\bDer}[2][]{\calD^{\mathrm{b}}_{#1}(#2)} 
\newcommand{\bHot}[2][]{\calK^{\mathrm{b}}_{#1}(#2)} 
\newcommand{\rMod}[1]{\calM({#1})}  
\newcommand{\rmod}[1]{\calM_{\mathrm{f}}(#1)}  
\newcommand{\fl}[1]{\calM_{\mathrm{fl}}(#1)} 
\newcommand{\rproj}[1]{{\calP({#1})}}              
\newcommand{\Herm}[2][]{{\mathcal{H}}^{#1}(#2)} 
\newcommand{\tHerm}[2][]{{\tilde{\mathcal{H}}}^{#1}(#2)} 
\DeclareMathOperator{\loc}{loc} 
\DeclareMathOperator{\res}{res} 
\newcommand{\dd}{{\mathrm{d}}} 
\newcommand{\aGW}[1]{{\mathcal{GW}}^{#1}_+} 
\newcommand{\GW}[1]{{\mathcal{GW}}^{#1}} 
\newcommand{\iaGW}[1]{{   \mathcal{B}}^{#1}_+} 
\newcommand{\iGW}[1]{{  \mathcal{B}}^{#1}} 
\DeclareMathOperator{\Tot}{Tot} 
\newcommand{\RD}{\Delta} 
\title[On The Gersten--Witt Complex]{On The Gersten--Witt Complex of an Azumaya Algebra with Involution}
\author{Uriya A.\ First$^*$}
\address{$^*$Department of Mathematics, University of Haifa}
\email{uriya.first@gmail.com}
\begin{document}

\maketitle

\begin{abstract}
Let $(A,\sigma)$ be an Azumaya algebra with involution over a regular ring $R$.
We prove that  the Gersten--Witt complex
of   $(A,\sigma)$ defined by Gille is isomorphic to the Gersten--Witt complex
of $(A,\sigma)$ defined by  Bayer-Fluckiger, Parimala and the author.
Advantages of both constructions are used to show that the Gersten--Witt complex
is exact when $\dim R\leq 3$, $\ind A\leq 2$ and $\sigma$ is orthogonal or symplectic.
This means that the Grothendieck--Serre conjecture holds 
for the   group $R$-scheme
of $\sigma$-unitary elements in $A$ under the same hypotheses;  
$R$ is not required to contain a field.
\end{abstract}

\section{Introduction}

Let $R$ be a regular   domain with fraction field $K$
such that $2\in\units{R}$.  
The Gersten conjecture for Witt groups, suggested by
Pardon  \cite{Pardon_1982_Gersten_conjecture}, predicts the existence  of   
a cochain complex of Witt groups
of the form
\[
0\to W(R)\xrightarrow{\dd_{-1}} W(K)
\xrightarrow{\dd_0} \bigoplus_{\frakp\in R^{(1)}} W(k(\frakp))
\xrightarrow{\dd_1} \bigoplus_{\frakp\in R^{(2)}} W(k(\frakp))
\xrightarrow{\dd_2} \cdots,
\]
called a \emph{Gersten--Witt  complex} for $R$, which is exact when $R$ is local. 
Here, $R^{(e)}$ is
the set of height-$e$ primes in $R$ and
$k(\frakp)$ is the fraction field of $R/\frakp$.
The map $\dd_{-1}$ is   base-change from $R$ to $K$,
and the $\frakp$-component of $\dd_1$ is   a \emph{second residue map}
$W(K)\to W(k(\frakp))$ in the sense of \cite[p.~209]{Scharlau_1985_quadratic_and_hermitian_forms}.

Pardon \cite{Pardon_1982_relation_between_Witt_groups_zero_cycles} gave a construction of a Gersten--Witt complex for any regular $R$, and proved
its exactess if $R$ is local of dimension $4$ or less.
Balmer and Walter \cite{Balmer_2002_Gersten_Witt_complex} later gave another construction
using \emph{triangulated hermitian categories}, and also proved its exactness
if $R$ is local of dimension $\leq 4$. 
Their approach revealed a spectral sequence underlying the Gersten--Witt complex, which
led to further positive results about the exactness:
It was established when   $R$ is semilocal and contains a field by 
Balmer, Gille, Panin and Walter \cite{Balmer_2002_Gersten_conj_equicharacteristic}
(see also \cite{Balmer_2001_Witt_cohomology}),  
when $R$ is semilocal of dimension $\leq 4$ by Balmer and Preeti \cite{Balmer_2005_shifted_Witt_groups_semilocal},
and when $R$ is local and unramified of mixed characteristic by 
Jacobson \cite{Jacobson_2018_cohomological_invariants}.
Another construction, applying
if $R$ is essentially of finite type over a field,
appears in  Schmid \cite{Schmid_1997_PhD}.

Let $(A,\sigma)$ be an Azumaya algebra with involution
over $R$ 
(see~\ref{subsec:Azumaya}) and let $\veps\in\{\pm 1\}$.
By introducing a modification to the Balmer--Walter construction,
Gille \cite{Gille_2007_hermitian_GW_complex_I}, \cite{Gille_2009_hermitian_GW_complex_II}
defined a Gersten--Witt complex    of  Witt groups of $\veps$-hemritian forms over $(A,\sigma)$,
and proved its exactness when $R$ is  local and contains a field \cite[Theorem~7.7]{Gille_2013_coherent_herm_Witt_grps}. Broadly speaking, the modification
to \cite{Balmer_2002_Gersten_Witt_complex} 
consisted of replacing the bounded derived category of finite projective $R$-modules and the duality
$\Hom(-,R)$
with
a suitable
subcategory of the bounded derived category of quasi-coherent sheaves  over $\Spec R$ and the duality induced by a dualizing complex.
When $(A,\sigma,\veps)=(R,\id_R,1)$, Gille's Gersten--Witt complex
is isomorphic to the one defined by Balmer and Walter.

Recently, Bayer-Fluckiger, Parimala and the author \cite{Bayer_2019_Gersten_Witt_complex_prerprint} introduced another construction
of a Gersten--Witt complex of $\veps$-hemritian forms over $(A,\sigma)$,
and proved it is exact when $R$ is semilocal and  $\dim R\leq 2$ ($R$ is not required to contain  a field).
In contrast with \cite{Balmer_2002_Gersten_Witt_complex}, \cite{Gille_2007_hermitian_GW_complex_I}, \cite{Gille_2009_hermitian_GW_complex_II}, this 
construction 
does not require the use of triangulated hermitian categories.

The main purpose of this work is to prove that the Gersten--Witt complexes
defined by Gille and by Bayer-Fluckiger, Parimala and the author are in fact isomorphic.
In the process, we  streamline  
the Balmer--Walter construction of the Gersten--Witt complex, particularly the \emph{d\'evissage}, 
and explain how it can be applied to Azumaya algebras with involution
without using a dualizing complex as in \cite{Gille_2007_hermitian_GW_complex_I}, \cite{Gille_2009_hermitian_GW_complex_II}.
This is made possible thanks to a new method of transfer in hermitian categories, which is of independent interest;
see Section~\ref{sec:transfer}.

The isomorphism between the constructions means that   tools
developed for individual constructions can be applied together.
Specifically, it allows us to use
the spectral sequence underlying Gille's Gersten--Witt complex
\cite{Gille_2007_hermitian_GW_complex_I}, \cite{Gille_2009_hermitian_GW_complex_II}
together with the $8$-periodic exact sequence of \cite{First_2022_octagon},
which is compatible with the Gersten--Witt complex of   \cite{Bayer_2019_Gersten_Witt_complex_prerprint}.
By putting both of these together, we prove that the Gersten--Witt complex
of $(A,\sigma)$ is exact when $R$ is semilocal of dimension $\leq 3$,
the index of $A$ is at most $2$
and $\sigma$ is orthogonal or symplectic; see Theorem~\ref{TH:exactness-dim-three-ind-two}.

This in turn establishes some open  cases of the famous \emph{Grothendieck--Serre conjecture},
which predicts that the map $\HH^1_{\et}(R,\bfG)\to \HH^1_{\et}(K,\bfG)$
has trivial kernel for every regular local ring $R$ and every reductive (connected) group scheme
$\bfG\to \Spec R$. See \cite[\S5]{Panin_2018_Grothendieck_Serre_conj_survey} 
and \cite{Cesnavicius_2020_Grothendieck_Serre_split_unramified_preprint} for surveys
of the many works discussing this conjecture. Our Theorem~\ref{TH:exactness-dim-three-ind-two}
and \cite[Proposition~8.7]{First_2022_octagon} imply:

\begin{thm}
	Let $R$ be a regular semilocal domain such that $\dim R\leq 3$
	and $2\in\units{R}$,
	and let $K$ be the fraction field of $R$.
	Let $(A,\sigma)$ be an Azumaya $R$-algebra with involution
	such that $\ind A\leq 2$ and $\sigma$ is orthogonal or symplectic.
	Let $\bfG$ denote the neutral component of the group $R$-scheme of elements
	$a\in A$ satisfying $a^\sigma a=1$. 
	Then $\HH^1_{\et}(R,\bfG)\to \HH^1_{\et}(K,\bfG)$
	is injective.
\end{thm}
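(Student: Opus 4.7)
The plan is to assemble the statement from two ingredients that the paper has already isolated: Theorem~\ref{TH:exactness-dim-three-ind-two}, which asserts exactness of the Gersten--Witt complex of $(A,\sigma)$ under exactly the present dimension, index, and involution-type hypotheses, and \cite[Proposition~8.7]{First_2022_octagon}, which is tailored precisely to convert such an exactness statement into injectivity of $\HH^1_{\et}(R,\bfG)\to\HH^1_{\et}(K,\bfG)$ for $\bfG$ the neutral component of the $\sigma$-unitary group. The bulk of the work is thus already contained in those two results, and what remains is essentially a translation.

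First I would record the torsor--form dictionary. The $R$-group scheme whose points are $a\in A$ with $a^\sigma a=1$ is a reductive $R$-group scheme, and its first \'etale cohomology classifies $\veps$-hermitian forms of a fixed rank over $(A,\sigma)$, with $\veps=1$ or $-1$ according to the type of $\sigma$. Its neutral component $\bfG$ is cut out by an appropriate discriminant or Pfaffian condition, so that $\HH^1_{\et}(R,\bfG)$ parameterizes such hermitian forms with trivial invariant, and the map to $\HH^1_{\et}(K,\bfG)$ corresponds to generic-fibre base change of the form. Injectivity of this map is therefore the statement that a hermitian form over $(A,\sigma)$ with trivial invariant is determined by its generic fibre.

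Next I would apply Theorem~\ref{TH:exactness-dim-three-ind-two} to every localization $R_\frakp$ at a prime $\frakp$ of $R$, concluding that the Gersten--Witt complex of $(A_\frakp,\sigma_\frakp)$ is exact. This is the only step that consumes the hypotheses $2\in\units{R}$, $\dim R\leq 3$, $\ind A\leq 2$, and $\sigma$ orthogonal or symplectic. In particular, the restriction map on $\veps$-hermitian Witt groups from $R$ to $K$ is injective with image the unramified classes. Feeding this exactness into \cite[Proposition~8.7]{First_2022_octagon} then yields the desired triviality of the kernel of $\HH^1_{\et}(R,\bfG)\to\HH^1_{\et}(K,\bfG)$.

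The genuine obstacle is therefore Theorem~\ref{TH:exactness-dim-three-ind-two}, whose proof rests on the identification of the two Gersten--Witt constructions discussed in the main body of the paper, combined with Gille's spectral sequence and the $8$-periodic exact sequence of \cite{First_2022_octagon}. Given that, the present theorem is a formal consequence: one merely applies \cite[Proposition~8.7]{First_2022_octagon} in the semilocal setting, taking care that the neutral-component reduction and the passage from the full unitary group to $\bfG$ is routine in both the orthogonal and symplectic cases.
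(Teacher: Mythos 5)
Your proposal is correct and is essentially the paper's own argument: the theorem is deduced by combining Theorem~\ref{TH:exactness-dim-three-ind-two} with \cite[Proposition~8.7]{First_2022_octagon}, exactly as you describe (the paper gives no further detail). The only superfluous step is passing to the localizations $R_\frakp$ — since $R$ is already semilocal of dimension $\leq 3$, Theorem~\ref{TH:exactness-dim-three-ind-two} applies to $R$ directly and Proposition~8.7 is invoked in that semilocal setting.
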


The paper is organized as follows:
Section~\ref{sec:Prelim} recalls   hermitian categories,
Azumaya algebras with involution
and some facts about $\Ext$-groups.
In Section~\ref{sec:GW-second-res} we recall the Gersten--Witt complex 
defined by Bayer-Fluckiger, Parimala and the author,
and in Section~\ref{sec:GW-Balmer} we recall the constructions
of Balmer--Walter and Gille. The proof that the constructions 
yield  isomorphic cochain complexes is given in Sections~\ref{sec:transfer}--\ref{sec:proof-of-dd-desc}:
In Section~\ref{sec:transfer}, we introduce a variant of transfer in hermitian categories,
which is applied in Section~\ref{sec:GW-propositions}
to define an isomorphism (called \emph{d\'evissage}) between the respective 
terms of both Gersten--Witt complexes. This is shown to be an isomorphism of cohain complexes
in Section~\ref{sec:proof-of-dd-desc}.
Finally, in Section~\ref{sec:exactness-more}, we apply
the isomorphism to establish the exactness of the Gersten--Witt complex
when $\ind A$ and $\dim R$ are small.

\medskip

We thank Stefan Gille for his comments on an earlier version  of this paper.
We also thank the anonymous referee for their comments.

\subsection*{Notation}

Throughout this paper, a ring means a commutative (unital) ring. 
Algebras are unital, but not necessarily commutative. 
{\it We assume
that $2$ is invertible in all rings an algebras.}

Unless otherwise indicated, $R$ denotes a ring. 
Unadorned $\Hom$-groups, $\Ext$-groups and tensors are taken over $R$.
An $R$-ring means a commutative
$R$-algebra. An $R$-algebra with involution   is a pair $(A,\sigma)$
such that $A$ is an $R$-algebra and $\sigma:A\to A$ is an \emph{$R$-linear}
involution. An invertible $R$-module
is a rank-$1$ projective $R$-module.
Given $\frakp\in \Spec R$, we write $k(\frakp)$ for
the fraction ring of $R/\frakp$, and set
$M(\frakp)= M\otimes k(\frakp)$ for any $R$-module $M$.
If $f:M\to N$ is an $R$-module homomorphism, let $f(\frakp)$ denote
$f\otimes\id_{k(\frakp)}:M(\frakp)\to N(\frakp)$.
We also let $\mu_2(R)=\{r\in R\suchthat r^2=1\}$.

Let $A$ be an $R$-algebra.
The center and the group of units of $A$ are denoted $\Cent(A)$ 
and $\units{A}$, respectively.
The category of all (resp.\ finite, finite projective, finite length) right $A$-modules
is denoted $\rMod{A}$ (resp.\ $\rmod{A}$, $\rproj{A}$, $\fl{A}$). Here, 
an $A$-module is said to be finite if it is finitely generated.
The bounded derived category of $\rproj{A}$ is denoted $\bDer{A}$.
Throughout, $\Hom$-groups and $\Ext$-groups taken over $A$
are  taken in the category of \emph{right} $A$-modules.
We write $A_A$ to denote $A$ when regarded as right module over itself.

\section{Preliminaries}
\label{sec:Prelim}

We recall some facts about hermitian categories,
Azumaya algebras with involution and $\Ext$-groups,
setting  notation along the way.
For an extensive discussion,
see \cite[Chapter~II]{Knus_1991_quadratic_hermitian_forms}
and \cite[\S1]{First_2022_octagon}, for instance.

\subsection{Hermitian categories}
\label{subsec:herm-cat}

As usual, a hermitian $R$-category is a triple $(\catC,*,\omega)$
such that $\catC$ is an additive $R$-category, $*:\catC\to \catC$
is a contravariant $R$-linear functor,
and $\omega:\id \to **$ is a natural isomorphism
satisfying $\omega_P^*\circ \omega_{P^*}=\id_{P^*}$ for all $P\in\catC$.
We also say that $(*,\omega)$ is a hermitian structure on $\catC$.

Given $\veps\in \mu_2(R)$,
an $\veps$-hermitian space over $(\catC,*,\omega)$
is a pair $(P,f)$, where $P\in\catC$
and $f:P\to P^*$
is a morphism satisfying $f=\veps f^*\circ \omega_P $.
We say that $(P,f)$ is unimodular if $f$ is an isomorphism.
The category of unimodular $\veps$-hermitian spaces over
$(\catC,*,\omega)$ with isometries as morphisms is denoted
$\Herm[\veps]{\catC,*,\omega}$.

Suppose in addition that $\catC$ is an exact category.
We say that $(\catC,*,\omega)$ is
an \emph{exact hermitian category},
or that $(*,\omega)$ is an \emph{exact hermitian structure} on $\catC$,  if 
$*:\catC\to \catC$ is exact.
In this case, we   define the Witt group of $\veps$-hermitian forms over $(\catC,*,\omega)$, 
denoted $W_\veps(\catC,*,\omega)$ or $W_\veps(\catC)$, as in
\cite[\S1.1]{Balmer_2005_Witt_groups}. That is,
$W_\veps(\catC)$ is the quotient of
the Grothendieck group of $\Herm[\veps]{\catC,*,\omega}$ (relative to orthogonal sum)
by the subgroup generated by \emph{metabolic} hermitian spaces. Here,
a unimodular $\veps$-hermitian space $(P,f)$ is called metabolic if there exists
a short exact sequence $L\embeds P\onto M$ in $\catC$
such that   $L\embeds P$ is the kernel of the composition $P\xrightarrow{f} P^*\onto L^*$. 
The   element of $W_\veps(\catC)$ represented by $(P,f)\in\Herm[\veps]{\catC}$
is  the \emph{Witt class}  of $(P,f)$ and 
denoted $[P,f]$ or $[f]$.

\begin{example}\label{EX:line-bundle}
	Let $(A,\sigma)$ be an $R$-algebra with involution and let $M$ be an invertible
	$R$-module. 
	Then $\sigma$ and $M$  induce  an exact
	hermitian structure $(*,\omega)=(*_{\sigma,M},\omega_{\sigma,M})$
	on $\rproj{A}$  as follows:
	The functor $*$ sends an object $P\in\rproj{A}$ to   $\Hom_A(P, A \otimes M)$ endowed with the \emph{right} 
	$A$-module structure given by $(\phi a)x=a^\sigma(\phi x)$ ($\phi\in P^*$, $a\in A$, $x\in P$),
	and   a morphism $\vphi$ to $\Hom_A(\vphi,A \otimes M)$.
	The isomorphism $\omega_P:P\to P^{**}$
	is given by   $(\omega_Px)\phi=(\phi x)^{\sigma\otimes \id_M}$ 
	($x\in P$, $\phi\in P^*$). 
	The pair $(*,\omega)$ is an exact hermitian structure
	by \cite[Example~1.2]{Bayer_2019_Gersten_Witt_complex_prerprint}.

	Hermitian spaces over $(\rproj{A},*,\omega)$
	and $(A\otimes M,\sigma\otimes\id_M)$-valued hermitian spaces over $(A,\sigma)$ 
	in the sense of \cite[\S1B]{Bayer_2019_Gersten_Witt_complex_prerprint}
	are essentially the same thing. 
	Recall that the latter are pairs 
	$(P,\hat{f})$ consisting of $P\in \rproj{A}$ and a biadditive
	map $\hat{f}:P\times P\to A\otimes M$
	satisfying $\hat{f}(xa,x'a')=a^\sigma \hat{f}(x,x')a'$ and $\hat{f}(x,x')=\veps \hat{f}(x',x)^{\sigma\otimes\id_M}$
	for all $x,x'\in P$, $a,a'\in A$.
	The map corresponding to $f:P\to P^*$ is
	$\hat{f}:P\times P\to A\otimes M$ given by
	$
	\hat{f}(x,y)=(fx)y$. 

	In the sequel, the category $\Herm[\veps]{\catC,*,\omega}$ and the Witt group
	$W_\veps(\catC,*,\omega)$ will be denoted as
	$\Herm[\veps]{A,\sigma;M}$ and $W_\veps(A,\sigma;M)$, respectively.
	We abbreviate
	this to $\Herm[\veps]{A,\sigma}$ and $W_{\veps}(A,\sigma)$ when $M=R$.
\end{example}

Let   $(\catD,\#,\eta)$ be another    hermitian $R$-category  and let $\gamma \in \mu_2(R)$.
Recall that a $\gamma$-hermitian functor from $(\catC,*,\omega)$ to $(\catD,\#,\eta)$
consists of a pair $(F,i)$, where   $F:\catC\to \catD$
is an   additive $R$-functor
and $i: F*\to \# F$ is a natural isomorphism
satisfying
$i_{P^*}\circ F\omega_P = \gamma\cdot i_P^\# \circ \eta_{FP}$
for all $P\in\catC$.
This induces a functor
$
F=\Herm{F,i}:\Herm[\veps]{\catC}\to\Herm[\gamma\veps]{\catD}
$
given by $F(P,f)=(FP,i_P\circ Ff)$ on objects and acting as $F$ on morphisms.
If $F:\catC\to \catD$ is an equivalence, then so is $\Herm{F,i}$,
and $(F,i)$ is called a \emph{$\gamma$-hermitian equivalence}.
If $(\catC,*,\omega)$ and $(\catD,\#,\eta)$   are  exact   hermitian $R$-categories
and $F$ is exact, then we also have an induced group homomorphism
$
F=W(F,i):W_\veps(\catC)\to W_{\gamma\veps}(\catD) 
$,
which is an isomorphism when $F$ is an equivalence of exact categories.

\subsection{Azumaya algebras with involution}
\label{subsec:Azumaya}

Following \cite[\S1.1]{First_2022_octagon},
an $R$-algebra $A$ is called \emph{separable projective} if it is separable
as an $R$-algebra and projective as $R$-module, or equivalently,
if $A$ is Azumaya over its center $\Cent(A)$  and $\Cent(A)$
is finite \'etale over $R$. An $R$-algebra with involution $(A,\sigma)$
is \emph{Azumaya}  if $A$ is separable projective over $R$,
and the structure map $R\to \Cent(A)^{\{\sigma\}}:=\{a\in \Cent(A)\suchthat {a^\sigma=a}\}$
is an isomorphism. 
See \cite[\S1]{First_2022_octagon} for further details.

We will work with separable projective $R$-algebras with involution
throughout, rather than the more restricted class of Azumaya $R$-algebras with involution.
This makes little difference in practice, because if $(A,\sigma)$ is a separable
projective $R$-algebra with involution, then $(A,\sigma)$ is Azumaya over
$\Cent(A)^{\{\sigma\}}$ and $\Cent(A)^{\{\sigma\}}$ is finite \'etale over $R$
\cite[Example~1.20]{First_2022_octagon}.

\medskip

If $A$ is a separable projective $R$-algebra,
then a right $A$-module $M$ is projective if and only if it is projective
as an $R$-module \cite[Proposition~2.14]{Saltman_1999_lectures_on_div_alg}.
This fact, together with Schanuel's lemma 
\cite[Corollary~5.6]{Lam_1999_lectures_on_modules_rings}, imply the following lemma.

\begin{lem}\label{LM:global-dim-of-Az}
	Let $A$ be a separable projective $R$-algebra
	and let $M\in \rMod{A}$.
	Then the projective dimension of $M$ as an $A$-module
	is at most the projective dimension of $M$ as an $R$-module.
	In particular, the right global dimension of $A$ is at most the global dimension of $R$.
\end{lem}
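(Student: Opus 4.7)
The plan is to reduce the $A$-module claim to the $R$-module claim by truncating a projective resolution over $A$ and applying the Saltman fact (cited just above the lemma) to its $n$-th syzygy.

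First I would set $n=\mathrm{pd}_R(M)$, assuming $n<\infty$ (else there is nothing to prove), and choose a projective resolution
\[
\cdots \to P_2 \to P_1 \to P_0 \to M \to 0
\]
in $\rMod{A}$. Let $K=\ker(P_{n-1}\to P_{n-2})$, so that $0\to K\to P_{n-1}\to\cdots\to P_0\to M\to 0$ is exact in $\rMod{A}$, and hence also in the category of $R$-modules. Because $A$ is projective as an $R$-module by hypothesis, every projective right $A$-module is a summand of a free $R$-module, hence projective over $R$; thus each $P_i$ is $R$-projective.

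Next I would invoke the standard consequence of Schanuel's lemma cited in the statement: if $M$ has projective dimension $\leq n$ over $R$, then the $n$-th syzygy in \emph{any} partial projective resolution of length $n$ is itself $R$-projective. Applying this to the sequence above yields that $K$ is projective as an $R$-module. Then, by the Saltman result recalled in the paragraph above the lemma (a right $A$-module is $A$-projective iff it is $R$-projective, when $A$ is separable projective over $R$), $K$ is projective as an $A$-module. This exhibits a projective resolution of $M$ in $\rMod{A}$ of length at most $n$, so $\mathrm{pd}_A(M)\leq n$, as desired.

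For the final sentence, I would just take the supremum over all $M\in\rMod{A}$:
\[
\mathrm{r.gl.dim}(A)=\sup_M \mathrm{pd}_A(M)\leq \sup_M \mathrm{pd}_R(M)\leq \mathrm{gl.dim}(R),
\]
where the first inequality is the first part of the lemma and the second follows since each right $A$-module is, in particular, an $R$-module. There is no real obstacle here; the only subtle point is invoking the Schanuel-type characterization of projective dimension correctly, but this is purely a standard homological-algebra step, so the proof should be a few lines.
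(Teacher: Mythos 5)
Your argument is correct and is exactly the argument the paper intends: the paper derives the lemma from the Saltman fact ($A$-projective $\Leftrightarrow$ $R$-projective) combined with (generalized) Schanuel's lemma, which is precisely your truncation-of-the-$A$-resolution-and-test-the-$n$-th-syzygy argument. The only cosmetic issue is the indexing of the syzygy in the degenerate cases $n=0,1$, which does not affect the substance.
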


\begin{cor}\label{CR:bDer-iso}
	Let $A$ be a separable projective algebra over a regular ring $R$,
	and let $\bDer{\rmod{A}}$ be the bounded derived
	category of $\rmod{A}$. Then the natural
	functor $\bDer{A}:=\bDer{\rproj{A}}\to \bDer{\rmod{A}}$
	is an equivalence.
\end{cor}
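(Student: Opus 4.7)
The plan rests on the finite right global dimension of $A$. Since $R$ is regular, its global dimension $d$ is finite (in all applications in this paper $R$ will be of finite Krull dimension), and Lemma~\ref{LM:global-dim-of-Az} therefore bounds $\rgd A$ by $d$. Moreover, because $\Cent(A)$ is finite \'etale over $R$ and $A$ is Azumaya over $\Cent(A)$, the algebra $A$ is finite as an $R$-module and in particular right Noetherian. The first step is to combine these two facts to show that every $M \in \rmod{A}$ admits a bounded projective resolution
\[
0 \to P_d \to P_{d-1} \to \cdots \to P_0 \to M \to 0
\]
with every $P_i \in \rproj{A}$: surject a finite free $A$-module onto $M$, take the (still finitely generated, by Noetherianness) kernel, surject a finite free onto it, and continue; after $d$ steps the remaining syzygy is automatically projective, and finite projective.

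With this resolution theorem in hand, the equivalence is a standard consequence of derived-category formalism, which I would split into two verifications. For \emph{full faithfulness}, I would observe that a bounded complex of objects of $\rproj{A}$ is $K$-projective as a complex of $A$-modules, so that the morphisms in $\bDer{\rmod{A}}$ between two such complexes coincide with homotopy classes of chain maps; since any quasi-isomorphism between bounded complexes of projectives is a homotopy equivalence, this in turn agrees with the Hom set in $\bDer{\rproj{A}}$. For \emph{essential surjectivity}, I would proceed by induction on the cohomological amplitude of $X^\bullet \in \bDer{\rmod{A}}$, using the canonical truncation triangles to peel off one cohomology group at a time and replacing each by a finite projective resolution provided by the first step.

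I expect the only (mild) obstacle to be in the essential-surjectivity step, namely controlling the length of the resulting complex of projectives so that it stays bounded on the left. This is exactly what the uniform bound $d$ on projective dimension ensures: although the inductive procedure a priori extends the complex leftward at every stage, after at most $d$ further syzygies one encounters a projective module and the construction stabilizes. The remaining points—well-definedness of the functor $\bDer{\rproj{A}} \to \bDer{\rmod{A}}$ and compatibility with the triangulated structure—are routine, as the inclusion $\rproj{A} \hookrightarrow \rmod{A}$ is exact.
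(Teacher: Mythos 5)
Your proposal is correct and takes essentially the same route as the paper: the whole point is that every finite right $A$-module has finite projective dimension over $A$ (regularity of $R$ plus Lemma~\ref{LM:global-dim-of-Az}), after which the equivalence $\bDer{A}\to\bDer{\rmod{A}}$ is the standard homological fact that the paper simply cites (Stacks Tags 0646, 064B) and you sketch by hand via K-projectivity and induction on amplitude. One small correction: a regular ring need not have finite global dimension (that requires finite Krull dimension), but your argument never actually needs a uniform bound $d$ --- finiteness of the projective dimension of each individual finite module suffices both for the resolution step and for the truncation induction, and that is exactly what regularity of $R$ together with Lemma~\ref{LM:global-dim-of-Az} provides, as in the paper's proof.
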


\begin{proof}
	It is enough to show that all  finite 
	right $A$-modules have finite projective dimension
	(consult \cite[Tags~\href{https://stacks.math.columbia.edu/tag/0646}{0646},
	\href{https://stacks.math.columbia.edu/tag/064B}{064B}]{stacks_project}).
	Since $R$ is a regular, all finite $R$-modules have finite projective
	dimension \cite[Theorem~5.94]{Lam_1999_lectures_on_modules_rings}, and 
	Lemma~\ref{LM:global-dim-of-Az} completes the proof.
\end{proof}

\subsection{Homological Algebra}
\label{subsec:homological}

Let $A$ be a  right noetherian $R$-algebra. Write $\calK $ for the homotopy
category of chain complexes in $\rmod{A}$,
and $\calD =\bDer{\rmod{A}}$ for  the derived category of $\rmod{A}$.
We denote the shift-to-the-left-and-negate-the-differential functor $\calD\to \calD$ by
$T$, that is, given a chain complex
$P=(P_i,d_i)_{i\in\Z} = (\cdots\xrightarrow{d_3} P_2\xrightarrow{d_2} P_1\xrightarrow{d_1} P_0 \xrightarrow{d_0} \cdots)$,
we have $TP = (P_{i-1},-d_{i-1})_{i\in\Z}$.

As usual,
given $M\in\rmod{A}$, a  projective resolution of $M$
is a chain complex $P$ of objects in $\rproj{A}$ supported in non-negative degrees
together with a map $\alpha=\alpha_P:P_0\to M$ such that
$\cdots \to P_1\to P_0\to M\to 0$ is exact. We use $\alpha$ to
freely identify $\HH_0(P)$ with $M$.

Let $M,N\in \rmod{A}$ and $e\in\N\cup\{0\}$.
Following \cite[Tag \href{https://stacks.math.columbia.edu/tag/06XQ}{06XQ}]{stacks_project},
we define   $\Ext^e_A (M,N)$ as $\Hom_{\calD }(M,T^eN)$, where $M$ and $N$ are regarded
as chain complexes concentrated in degree $0$.
Note that this definition does not require choosing a projective resolution of $M$,
or an injective resolution of $N$.
Rather, 
if $P$ is a projective resolution of $M$,
then the map $\alpha_P:P_0\to M$ defines a quasi-isomorphism $P\to M$
in $\calD $, which gives rise to an isomorphism 
$\Ext^e_R(M,N)\to \Hom_{\calD }(P,T^eN) =\Hom_{\calK }(P,T^eN)$
(see \cite[Tags \href{https://stacks.math.columbia.edu/tag/05TG}{05TG}, \href{https://stacks.math.columbia.edu/tag/06XR}{06XR}]{stacks_project}).
It is straightforward to see that $\Hom_{\calK }(P,T^eN)$ is $-e$-th homology
group of the chain complex of $R$-modules
\[\Hom_A(P,N):=(\cdots \to \Hom_A(P_{-1},N)\to \Hom_A(P_0,N)\to \Hom_A(P_{1},N)\to\cdots)\]
with $\Hom_A(P_0,N)$ occurring in degree $0$. Thus, we have an $R$-module isomorphism
\[
u:\Ext^e_A (M,N)\to \HH_{-e}(\Hom_A(P,N)),
\]
which we usually suppress.
If $P'$ is another projective resolution of $M$, then there is a unique
morphism $f:P\to P'$ in $\calD $  such that 
$\HH_0(f)=\id_M$ \cite[Theorem~12.4]{Buhler_2010_exact_categories}. 
The morphism $f$ induces an isomorphism $\HH_{-e}( \Hom(f,N)):\HH_{-e}(\Hom_A(P',N))\to \HH_{-e}(\Hom_A( P,N))$
and one readily checks that the following diagram commutes:
\[
\xymatrix{
\Ext^e_A(M,N) \ar[r]^-u \ar[rd]_{u'} &
\HH_{-e}(\Hom_A(P,N))  \\
&
\HH_{-e}(\Hom_A(P',N)) \ar[u]_{\HH_{-e}( \Hom(f,N))}
}
\]

Likewise, if $M'\in\rmod{A}$, $\phi\in\Hom_A(M,M')$ and $P'$ is a projective resolution
of $M'$, then there is a unique $f:P\to P'$ with $\HH_0(f)=\phi$
and the induced morphism $\HH_{-e}(\Hom_A(f,N)):\HH_{-e}(\Hom_A(P',N))\to\HH_{-e}(\Hom_A(P,N))$
coincides
with $\Ext_A^e(f,N):\Ext_A^e(M',N)\to \Ext_A^e(M,N)$ upon identifying
$\Ext_A^e(M',N)$ and  $\Ext_A^e(M,N)$ with $\HH_{-e}(\Hom_A(P',N))$ and $\HH_{-e}(\Hom_A(P,N))$,
respectively.

\begin{lem}\label{LM:derived-cat-base-change}
	Let $A$ and $B$ be a noetherian $R$-algebras.
	Let $\calD$ be the bounded derived category of $\rmod{A}$ and let $X,Y\in \calD$.
	If $B$ is flat over $R$, then the base-change map $\Hom_{\calD}(X,Y)\otimes B\to
	\Hom_{\bDer{\rmod{A\otimes B}}}(X\otimes B, Y\otimes B)$ is an isomorphism.
\end{lem}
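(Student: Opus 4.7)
The plan is a two-variable d\'evissage that reduces the lemma to the corresponding statement for $\Ext$-groups of finite right $A$-modules, which I would then settle by choosing a resolution of the first argument by finitely generated projective $A$-modules.

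Write $\calD' = \bDer{\rmod{A\otimes B}}$. Flatness of $B$ over $R$ makes $-\otimes B:\rMod{A}\to \rMod{A\otimes B}$ an exact functor that preserves finite generation and boundedness, so it descends to a triangulated functor $\calD\to\calD'$. For fixed $Y\in\calD$, both $X\mapsto \Hom_{\calD}(X,Y)\otimes B$ and $X\mapsto \Hom_{\calD'}(X\otimes B, Y\otimes B)$ are cohomological functors on $\calD^{\op}$ (the first because tensoring by the flat module $B$ is exact on abelian groups), and the base-change map of the lemma is a natural transformation between them. Applying the five-lemma to the truncation triangles $\tau_{<n}X\to X\to \tau_{\geq n}X$ and inducting on the length of $X$ reduces to $X = M\in\rmod{A}$ in degree $0$; the symmetric d\'evissage in $Y$ then reduces to $Y = N\in\rmod{A}$ in degree $0$. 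What remains to be shown is: for every $e\geq 0$, the natural map $\Ext^e_A(M,N)\otimes B \to \Ext^e_{A\otimes B}(M\otimes B, N\otimes B)$ is an isomorphism.

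Since $A$ is right noetherian and $M$ is finite, $M$ admits a projective resolution $P\to M$ with each $P_i\in\rproj{A}$ finitely generated. Flatness of $B$ over $R$ yields flatness of $A\otimes B$ over $A$, so $P\otimes B\to M\otimes B$ is a projective resolution by finitely generated projective $A\otimes B$-modules. For every finitely generated projective $Q\in\rproj{A}$ the canonical map $\Hom_A(Q,N)\otimes B\to \Hom_{A\otimes B}(Q\otimes B, N\otimes B)$ is an isomorphism---trivial for $Q = A$, hence for finite free $Q$, and for arbitrary $Q$ by passage to direct summands. Taking $Q = P_i$ and assembling term-by-term gives an isomorphism of $R$-cochain complexes $\Hom_A(P,N)\otimes B\cong \Hom_{A\otimes B}(P\otimes B, N\otimes B)$; flatness of $B$ over $R$ lets $-\otimes B$ commute with cohomology, so
\[
\Ext^e_A(M,N)\otimes B = \HH_{-e}(\Hom_A(P,N))\otimes B \cong \HH_{-e}(\Hom_{A\otimes B}(P\otimes B, N\otimes B)) = \Ext^e_{A\otimes B}(M\otimes B, N\otimes B),
\]
using the identifications recalled in Subsection~\ref{subsec:homological}.

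The genuine work beyond bookkeeping is the naturality check: one must verify that the isomorphism produced above agrees with the base-change map appearing in the statement of the lemma and is independent of the chosen projective resolution. Both points reduce to the commutative diagram recalled at the end of Subsection~\ref{subsec:homological}, applied to any comparison morphism between projective resolutions. The d\'evissage step is otherwise routine, modulo checking that $-\otimes B:\calD\to\calD'$ is indeed triangulated, which follows directly from flatness of $B$ over $R$.
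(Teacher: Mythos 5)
Your proposal is correct and follows essentially the same route as the paper: a d\'evissage on the lengths of $X$ and $Y$ via distinguished triangles and the Five Lemma, reducing to the case of two modules, where the statement is the classical flat base change for $\Ext$-groups of a finite module over a right noetherian algebra. The only cosmetic difference is that the paper simply cites this base case (Reiner, Theorem~2.39), whereas you reprove it by the standard argument with a resolution by finitely generated projectives.
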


\begin{proof}
	We abbreviate $X\otimes B$ to $X_B$, $A\otimes B$ to $A_B$, and so on.
	Suppose that $X$ is concentrated in degrees $n,\dots,n+r$
	and $Y$ is concentrated in degrees $m,\dots,m+s$.
	We prove the lemma by induction on $r+s$. The case $r=s=0$
	is the famous fact that the natural map $\Ext^e_A(M,N)\otimes B\to \Ext^e_{A_B}(M_B,N_B)$
	is an isomorphism when $B$ is flat over $R$, and $M$ and $N$
	are right $M$-modules such that $N$ is finitely presented;
	see \cite[Theorem~2.39]{Reiner_2003_maximal_orders_reprint}, for instance.
	If $s>0$, let $K=\ker(d_n:X_n\to X_{n+1})$
	and let $X'=(0\to X_{n-1}/\im(d_n)\to X_{n-2}\to \cdots)$.
	Then we have a distinguished triangle $T^n K\to X\to X'\to T^{n+1}K$
	in $\calD$. Since $B$ is flat over $R$,
	$T^n K_B\to X_B\to X'_B\to T^{n+1}K_B$ is distinguished in 
	$ \bDer{\rmod{A_B}}$.
	These distinguished triangles give rise to a commutative diagram
	\[
	\xymatrixcolsep{0.8pc}\xymatrix{
	\cdots \ar[r]
	&
	\Hom(T^{n+1}K,Y)_B  \ar[r]\ar[d] &
	\Hom(X',Y)_B  \ar[r]\ar[d] & 
	\Hom(X,Y)_B	\ar[r]\ar[d]^{(*)} &
	\Hom(T^nX,Y)_B \ar[r]\ar[d] &
	\cdots
	\\
	\cdots \ar[r]
	&
	\Hom(T^{n+1}K_B,Y_B) \ar[r] &
	\Hom(X'_B,Y_B) \ar[r] & 
	\Hom(X_B,Y_B)	\ar[r]  &
	\Hom(T^nX_B,Y_B)  \ar[r]  &
	\cdots
	}
	\]
	in which the top and bottom row are exact \cite[\href{https://stacks.math.columbia.edu/tag/0149}{Tag 0149}]{stacks_project};
	the $\Hom$-groups are taken in $\calD$ or $\bDer{\rmod{A_B}}$. 
	Since $X'$ is concentrated in degrees
	$n+1,\dots,n+r$, the induction hypothesis and the Five Lemma
	imply that $(*)$ is an isomorphism. The case $s>0$ is handled similarly.
\end{proof}

\section{The Gersten--Witt Complex via Second-Residue Maps}
\label{sec:GW-second-res}

Let $R$ be a regular   ring,   
let $(A,\sigma)$ be
a separable projective $R$-algebra with involution,
and let $\veps\in \mu_2(R)$.
We now recall the definition of the (augmented) 
Gersten--Witt complex of $\veps$-hermitan over $(A,\sigma)$
constructed in \cite{Bayer_2019_Gersten_Witt_complex_prerprint} using generalized
second residue maps.
This complex is denoted $\aGW{A/R,\sigma,\veps}$, or just
$\aGW{A,\sigma,\veps}$, and takes the form
\[
0\to W_{\veps}(A,\sigma)
\xrightarrow{\dd_{-1}} \bigoplus_{\frakp\in R^{(0)}}\tilde{W}_\veps(A(\frakp) )
\xrightarrow{\dd_0} \bigoplus_{\frakp\in R^{(1)}}\tilde{W}_\veps(A(\frakp) )
\xrightarrow{\dd_1} \bigoplus_{\frakp\in R^{(2)}}\tilde{W}_\veps(A(\frakp) )
\to\cdots,
\]
where $\tilde{W}_\veps(A(\frakp) )$ stands
for $W_\veps(A(\frakp),\sigma(\frakp);\tilde{k}(\frakp))$ (notation as in Example~\ref{EX:line-bundle})
and 
\[
\tilde{k}(\frakp):=\Ext^{\hgt \frakp}_{R_\frakp}(k(\frakp),R_{\frakp})\cong
\Hom_R({\textstyle\bigwedge}_{R_\frakp}^{\hgt \frakp}(\frakp_\frakp/\frakp_\frakp^2),k(\frakp)) 
\]
(see \cite[Proposition~1.9(iii)]{Bayer_2019_Gersten_Witt_complex_prerprint}
regarding the  isomorphism).
Note that $\tilde{k}(\frakp)\cong k(\frakp)$ as $R$-modules, and thus
$\tilde{W}_\veps(A(\frakp))\cong W_{\veps}(A(\frakp),\sigma(\frakp))$, but this
isomorphism is not canonical unless $\hgt \frakp=0$.
We shall write $\GW{A,\sigma,\veps}$ for the complex obtained from $\aGW{A,\sigma,\veps}$
by removing the $-1$-term.

The $\frakp$-component of the differential $\dd_{-1}$ is given by localizing at $\frakp$.
For $e\geq 0$, the differential $\dd_e$ is 
defined to be $\sum_{\frakp,\frakq}\partial_{\frakp,\frakq}$,
where the sum ranges over all $\frakp\in R^{(e)}$ and $\frakq\in R^{(e+1)}$
with $\frakp\subseteq \frakq$, and $\partial_{\frakp,\frakq}$ is defined
as follows.

We base-change from $R$ to $R_\frakq$ in order to assume    
that $R$ is a regular local ring of dimension $e+1$
and $\frakq$ is its maximal ideal.
Set $S=R/\frakp$, $\frakm=\frakq/\frakp$, $\tilde{S}=\Ext^e (S,R)$ and $\tilde{\frakm}^{-1}=
\Ext^e(\frakm,R)$. As explained in \cite[\S2.1]{Bayer_2019_Gersten_Witt_complex_prerprint},
the localization-at-$\frakp$ maps
from $A\otimes \tilde{S}$ and $A\otimes \tilde{\frakm}^{-1}$ to $A\otimes \tilde{k}(\frakp)$
are injective, and when regarding
$A\otimes \tilde{S}$ and $A\otimes \tilde{\frakm}^{-1}$ as submodules of $A\otimes \tilde{k}(\frakp)$,
we have
\begin{equation}\label{EQ:Amtilde-property}
A\otimes\tilde{\frakm}^{-1} = \{x\in A\otimes\tilde{k}(\frakp)\where  x\frakm\subseteq
A\otimes \tilde{S}\}.
\end{equation}
Furthermore, we have $\Ext^{e+1}_R(S,R)=0$ and
$\Ext^e_R(k(\frakq),R)=0$, and therefore
a    short
exact sequence 
$ \tilde{S}\embeds  \tilde{\frakm}^{-1} \onto  \tilde{k}(\frakq)$,
induced by 
$\frakm\embeds S\onto k(\frakq)$. 
Since $A$
is flat over $R$, the induced sequence
\begin{equation}\label{EQ:fundamental-exact-seq}
0\to A\otimes \tilde{S}\to A\otimes\tilde{\frakm}^{-1} \xrightarrow{\calT} A\otimes \tilde{k}(\frakq) 
\to 0
\end{equation}
is also exact.
The map indicated by $\calT$ will be denoted as
$\calT_{\frakp,\frakq}$ or $\calT_{\frakp,\frakq,A}$ when
there is a risk of confusion.

Let $(V,f)\in \tHerm[\veps]{A(\frakp)}:=\Herm[\veps]{A(\frakp),\sigma(\frakp);\tilde{k}(\frakp)}$.
An \emph{$A$-lattice} in $V$ is a finitely generated $A\otimes S$-submodule $U$ of $V$
such that $U\cdot k(\frakp)=V$. Given an $A$-lattice $U$ in $V$, write
\[
U^f=\{x\in V\suchthat \hat{f}(U,x)\subseteq A\otimes \tilde{S}\}.
\]
Then $U^f$ is also an $A$-lattice in $V$ and $U^{ff}=U$ \cite[Lemma~2.1]{Bayer_2019_Gersten_Witt_complex_prerprint}.
Moreover, this source also tells us that there exists an $A$-lattice
$U$ in $V$ such that $U^f\frakm\subseteq U\subseteq U^f$.
Choosing an $A$-lattice $U$ in $V$
with $U^f\frakm\subseteq U\subseteq U^f$,
\eqref{EQ:Amtilde-property} allows us to define $\what{\partial f}: U^f/U\times U^f/U\to A\otimes \tilde{k}(\frakq)$
by
\[
\what{\partial f}(x+U,y+U)=\calT_{\frakp,\frakq}(f(x,y)).
\]
Write $(U^f/U,\partial f)$ for the corresponding hermitian space
in $\tHerm{A(\frakq)}$ and set
\[
\partial_{\frakp,\frakq}[V,f]=[U^f/U,\partial f].
\]
This is independent of the choice of $U$ \cite[Lemma~2.3]{Bayer_2019_Gersten_Witt_complex_prerprint}.
 
\section{The Balmer--Walter Construction of The Gersten--Witt Complex}
\label{sec:GW-Balmer}

Let $R,A,\sigma,\veps$ be as in Section~\ref{sec:GW-second-res}.
We proceed with recalling   the Balmer--Walter construction
of the Gersten--Witt complex of $R$, extending
it to $\veps$-hermitian forms over $(A,\sigma)$ in the process.
We also recall Gille's Gersten--Witt complex.

We refer the reader to \cite{Balmer_2000_Triangular_Witt_I}
and \cite{Balmer_2001_Triangular_Witt_II}
for all the necessary definitions concerning triangulated
hermitian categories (also called triangulated categories with duality). 
A concise
treatment is given in \cite[\S\S1--2]{Balmer_2002_Gersten_Witt_complex}.

\medskip

Let $(*,\omega)$ denote the hermitian structure induced by $\sigma$
on $\rproj{A}$ (Example~\ref{EX:line-bundle} with $M=R$).
As explained in \cite[\S2.6, \S2.8]{Balmer_2001_Triangular_Witt_II},
the bounded derived category
of $\rproj{A}$, denoted $\bDer{A}$, inherits
a triangulated hermitian structure, which we denote by $(*,1,\omega)$.
Specifically, if $P=(P_i,d_i)_{i\in \Z}=(\cdots\to P_1
\xrightarrow{d_1} P_0 \xrightarrow{d_0} P_{-1} \to \cdots)$,
then $P^*=(P_{-i}^*,d_{1-i}^*)_{i\in \Z}=(\cdots\to P_{-1}^*
\to P_0^* \to P_{ 1}^* \to \cdots)$, and $\omega_P=(\omega_{P_i})_{i\in \Z}$.
The corresponding $n$-th shifted hermitian structure (see \cite[p.~131]{Balmer_2002_Gersten_Witt_complex})
is
$(D_n,\delta_n,\omega_n):=(T^n\circ *,   (-1)^n,  (-1)^{n(n+1)/2} \omega)$,
and we write $W_\veps^n(A,\sigma)$ or $W_\veps^n(\bDer{A},*,1,\omega)$
or $W_\veps^n(\bDer{A})$
for the Witt group of $(\bDer{A},D_n,\delta_n,\veps\omega_n)$.

For every $e\geq 0 $, let $\calD_e=\bDer[e]{A}$ denote the full   subcategory 
of $\bDer{A}$ consisting of chain complexes   with homology $R$-supported in codimension $ e$.
Then   we have a filtration, 
\begin{align} \label{EQ:codim-filtration}
\bDer{A}&=\bDer[0]{A}\supseteq \bDer[1]{A}\supseteq 
\bDer[2]{A}\supseteq  \dots
\end{align}
in which
every term is a full subcategory of $\bDer{A}$   closed under shifts, mapping cones, isomorphisms, direct summands 
and $*$. (Note that $\bDer[e]{A}=0$ if $e>\dim R$.) 
Balmer \cite[Theorem~6.2]{Balmer_2000_Triangular_Witt_I}
(see also \cite[Theorem~7.1]{Balmer_2002_Gersten_Witt_complex})
showed the that the localization sequence  $\calD_{e+1}\to \calD_e\to \calD_e/\calD_{e+1}$ 
gives rise to a long exact sequence of 
Witt groups,
\begin{align*} 
\cdots&\to 
W^n_\veps(\calD_{e+1})\to
W^n_\veps(\calD_e)\to
W^n_\veps(\calD_e/\calD_{e+1})\xrightarrow{\partial_n}
W^{n+1}_\veps(\calD_{e+1})
\to\cdots,
\end{align*}
in which the left and middle maps are induced by the  evident functors,
and the right arrow is given by taking cones in the sense of 
\cite[Definitions~5.16, 2.10]{Balmer_2000_Triangular_Witt_I}. 
Writing $\dd_{-1}$ for the natural map
$W^0_\veps(\calD_0)\to W^0_\veps(\calD_0/\calD_1)$
and $\dd_e$ ($e\geq 0$) for the composition
$
W^e_\veps(\calD_e/\calD_{e+1})\xrightarrow{\partial_e}
W^{e+1}_\veps(\calD_{e+1})\to
W^{e+1}_\veps(\calD_{e+1}/\calD_{e+2}) 
$,
we get a cochain complex
\begin{align*} 
&0\to
W^0_\veps(\calD_0)\xrightarrow{\dd_{-1}}
W^0_\veps(\calD_0/\calD_1)\xrightarrow{\dd_0}
W^1_\veps(\calD_1/\calD_2)\xrightarrow{\dd_1}
W^2_\veps(\calD_2/\calD_3)\xrightarrow{\dd_2}
\cdots
,
\end{align*}
which we denote by $\iaGW{A/R,\sigma,\veps}$, 
or simply $\iaGW{A,\sigma,\veps}$. 
This is the (augmented) Gersten--Witt complex of $(A,\sigma,\veps)$ \`a la Balmer and Walter
\cite{Balmer_2002_Gersten_Witt_complex}.
We write $\iGW{A,\sigma,\veps}$ for the complex obtained from $\iaGW{A,\sigma,\veps}$
by removing the $-1$-term.

In the case
$(A,\sigma,\veps)=(R,\id_R,1)$,
Balmer and Walter \cite[\S\S6--7]{Balmer_2002_Gersten_Witt_complex}
defined a canonical isomorphism 
--- called \emph{d\'evissage} ---
between
the respective terms of $\iaGW{A,\sigma,\veps}$ and $\aGW{A,\sigma,\veps}$.
Bayer-Fluckiger, Parimala and the author
\cite[Proposition~2.7]{Bayer_2019_orders_locally_iso} showed that this isomorphism also 
respects the differentials.
Here we shall generalize this to all $A,\sigma,\veps$:

\begin{thm}\label{TH:GW-well-defined}
	There is a canonical isomorphism
	$s:\iaGW{A,\sigma,\veps}\to \aGW{A,\sigma,\veps}$.
\end{thm}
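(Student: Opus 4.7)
The plan is to prove Theorem~\ref{TH:GW-well-defined} in two stages: first, construct a degree-wise isomorphism $s_e$ between the corresponding terms of the two complexes (the \emph{d\'evissage}), and then verify that these isomorphisms commute with the differentials so that $s=(s_e)$ is a morphism of cochain complexes.

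For the d\'evissage, the case $e=-1$ is formal: $\tilde{k}(\frakp)$ is canonically isomorphic to $k(\frakp)=R_\frakp$ when $\hgt\frakp=0$, so the required map $W_\veps(A,\sigma)=W^0_\veps(\calD_0)\to\bigoplus_{\frakp\in R^{(0)}}\tilde{W}_\veps(A(\frakp))$ is the sum of localizations. For $e\geq 0$, I proceed in two substeps. A support-decomposition argument (in the spirit of \cite[\S6]{Balmer_2002_Gersten_Witt_complex}) splits $\calD_e/\calD_{e+1}$ as a hermitian product of subcategories $\calD_e^\frakp/\calD_{e+1}^\frakp$ indexed by height-$e$ primes, yielding $W^e_\veps(\calD_e/\calD_{e+1})\cong\bigoplus_{\frakp\in R^{(e)}}W^e_\veps(\calD_e^\frakp/\calD_{e+1}^\frakp)$. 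Fixing $\frakp$ and base-changing to $R_\frakp$, I identify $\calD_e^\frakp/\calD_{e+1}^\frakp$ with the bounded derived category of finite-length $A_\frakp$-modules placed in degree~$0$; by Lemma~\ref{LM:global-dim-of-Az} such modules have projective dimension exactly $e$, so the shifted duality $T^e\circ *$ is concentrated in degree~$0$ and reduces to $\Ext^e_{A_\frakp}(-,A_\frakp)$.

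The core computation is then to produce a natural identification $\Ext^e_{A_\frakp}(M,A_\frakp)\cong\Hom_{A(\frakp)}(M,A(\frakp)\otimes\tilde{k}(\frakp))$ for finite-length $A(\frakp)$-modules $M$, and to show that under this identification the shifted hermitian structure on $\bDer{A_\frakp}$ corresponds to $(*_{\sigma(\frakp),\tilde{k}(\frakp)},\omega_{\sigma(\frakp),\tilde{k}(\frakp)})$ on $\rproj{A(\frakp)}$, up to the appropriate sign conventions. This is precisely where the transfer technique developed in Section~\ref{sec:transfer} enters: applied to the separable projective extension $R_\frakp\to A_\frakp$, it furnishes a hermitian equivalence between the relevant subcategory of $\bDer{A_\frakp}$ and $\rproj{A(\frakp)}$ with its twisted duality. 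Applying $W_\veps$ yields the local d\'evissage $W^e_\veps(\calD_e^\frakp/\calD_{e+1}^\frakp)\cong\tilde{W}_\veps(A(\frakp))$, and assembling over $\frakp$ gives $s_e$.

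Compatibility with $\dd_{-1}$ is immediate, since both sides are localization. For $\dd_e$ with $e\geq 0$, the strategy is to represent a Witt class on the right-hand side by an $A$-lattice $U$ satisfying $U^f\frakm\subseteq U\subseteq U^f$ as in Section~\ref{sec:GW-second-res}: the short exact sequence $U\embeds U^f\onto U^f/U$ in $\calD_{e+1}^\frakq$ provides an explicit cone realizing Balmer's connecting morphism $\partial_e$, while the induced pairing on the cone is precisely $\what{\partial f}$ thanks to the characterization \eqref{EQ:Amtilde-property} of $A\otimes\tilde{\frakm}^{-1}$ and the exactness of \eqref{EQ:fundamental-exact-seq}. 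The main obstacle lies here: one must simultaneously track the hermitian structures of the triangulated and module-theoretic pictures through the d\'evissage while following the cone construction. The commutative case $(A,\sigma)=(R,\id_R)$ was carried out in \cite[Proposition~2.7]{Bayer_2019_orders_locally_iso}, but the generalization to an Azumaya algebra with involution requires the new transfer construction of Section~\ref{sec:transfer} to avoid the dualizing-complex machinery of \cite{Gille_2007_hermitian_GW_complex_I,Gille_2009_hermitian_GW_complex_II}; this is also what permits the argument to proceed without assuming $R$ contains a field.
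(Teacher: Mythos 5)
Your outline follows essentially the same route as the paper: for the term-wise isomorphism, localization over the height-$e$ primes (Proposition~\ref{PR:derived-localization}), reduction to complexes with homology concentrated in degree~$0$, i.e.\ to finite-length $A_\frakp$-modules with the duality $\Ext^e_{A_\frakp}(-,A_\frakp)$ (Proposition~\ref{PR:GW-reduction-I}), passage to the semisimple subcategory $\calC^0(A_\frakp)$ (Proposition~\ref{PR:GW-reduction-II}; this step must be made explicit, since $\Ext^e_{A_\frakp}(M,A_\frakp)$ carries an $A(\frakp)$-structure only when $\frakp$ kills $M$), and then the form-free transfer of Section~\ref{sec:transfer}, taken with respect to the object $K_A=A\otimes K$ --- not to the ring extension $R_\frakp\to A_\frakp$ --- whose endomorphism ring is $A(\frakp)$, followed by identifying the twisting bimodule $\Hom(K_A,D_eK_A)$ with $A\otimes\tilde{k}(\frakp)$ (this needs the Koszul sign computation of Lemma~\ref{LM:Phi-P-Q-dfn}) and checking independence of the resolution $K$. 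One correction to your $e=-1$ discussion: $s_{-1}$ is not a localization map but Balmer's embedding-in-degree-zero isomorphism $W_\veps(A,\sigma)\to W^0_\veps(\bDer{A})$; what you describe is the differential $\dd_{-1}$ of $\aGW{A,\sigma,\veps}$, not the comparison map.

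The genuine gap is the compatibility with the differentials, which you state as a strategy and then defer as ``the main obstacle''. This is exactly the content of the theorem that goes beyond what was known (a term-wise isomorphism was already available; the open point was the differentials), so it cannot be left unexecuted. Concretely: (i) you do not treat the components with $\frakp\nsubseteq\frakq$, which must be shown to vanish (the paper does this by comparing with $\iGW{A_\frakq,\sigma_\frakq,\veps}$); (ii) $U\embeds U^f$ is a map of $A\otimes S$-modules, not of complexes, so before invoking Balmer's symmetric cone one must realize $(U,f|_U)$ as the transfer along $L_A=A\otimes L$ ($L$ a resolution of $S=R/\frakp$) of a symmetric morphism $(Y,g)$ in an exact subcategory $\calB$ of $\bDer{A}$; since $L_A$ is not a projective generator there, Section~\ref{sec:transfer} does not apply verbatim, and the paper proves this equivalence separately (Lemma~\ref{LM:equiv-of-L-and-B}) and then carries out a multi-step identification of the form induced on $\Hom(K_A,Z)$, with $Z$ the cone of $g$, against $\what{\partial f}$, using the commutative diagram of Lemma~\ref{LM:A-tilde-S-diag}; (iii) that identification produces a sign, $\hat{h}_1=-\what{\partial f}_2$, which is why the paper sets $s_e=(-1)^e s'_e$; your claim that the induced pairing ``is precisely $\what{\partial f}$'' is precisely the statement that requires this bookkeeping and, with the unnormalized dévissage, is off by that sign.
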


\begin{remark}
Gille's Gersten--Witt complex
\cite{Gille_2007_hermitian_GW_complex_I}, \cite{Gille_2009_hermitian_GW_complex_II}
is defined when $\dim R$ is finite, and 
is isomorphic to $\iaGW{A,\sigma,\veps}$. This is explained in 
\cite[\S2.10, Example~4.4]{Gille_2007_hermitian_GW_complex_I}
and \cite[pp.~349--350]{Gille_2009_hermitian_GW_complex_II}. Briefly,
let 
$\bDer[c]{\rMod{A}}$ denote the full subcategory
of $\bDer{\rMod{A}}$ consisting of chain complexes with finitely generated homology.
Fix an injective resolution $I_\bullet\in \bDer[c]{\rMod{R}}$  of $R$
and set $J_\bullet =A\otimes_R I_\bullet$. 
For every $F_\bullet \in \bDer[c]{\rMod{A}}$, let $F_\bullet^\#$ denote
the $\Hom$-chain complex ${\calH om}_A(F_\bullet,I_\bullet)$, regarded as complex of \emph{right} $A$-modules
by twisting via $\sigma$. There is a natural isomorphism $\eta: F\to F^{\#\#}$
making $(\bDer[c]{\rMod{A}},\#,1,\eta)$ into a triangularted hermitian category,
see \cite[\S3.7]{Gille_2009_hermitian_GW_complex_II}. Moreover,
the isomorphism $A_A\cong J_\bullet$ in $ \bDer[c]{\rMod{A}}$
induces a   natural isomorphism   $i:P_\bullet^*\to P_\bullet^\#$ for all $P_\bullet\in \bDer{A}=\bDer{\rproj{A}}$
such that $(\id,i):(\bDer{A},*,1,\omega)\to(\bDer[c]{\rMod{A}},\#,1,\eta)$ is
a $1$-hermitian $1$-exact functor (in the sense of \cite[\S4]{Balmer_2002_Gersten_Witt_complex}).
Gille's Gersten Witt complex is defined exactly as $\iaGW{A,\sigma,\veps}$
with the difference that $(\bDer{A},*,1,\omega)$ is 
replaced with $(\bDer[c]{\rMod{A}},\#,1,\eta)$.
By Corollary~\ref{CR:bDer-iso},
$(\id,i)$ is  an equivalence of triangulated hermitian categories.
This equivalence   respects the codimension filtration
\eqref{EQ:codim-filtration} on both $\bDer{A}$ and $\bDer[c]{\rMod{A}}$, 
so Gille's Gersten--Witt complex is isomorphic
to $\iaGW{A,\sigma,\veps}$.

Gille also showed that the respective terms of
$\iaGW{A,\sigma,\veps}$ and $\aGW{A,\sigma,\veps}$ are isomorphic for all $A,\sigma,\veps$,
but it is not clear that  this isomorphism 
  respects  the differentials. 
\end{remark}

Theorem~\ref{TH:GW-well-defined} affords
a simple proof of  the following proposition, which appears
as Theorem~2.9 in \cite{Bayer_2019_Gersten_Witt_complex_prerprint}.

\begin{prp}\label{PR:base-of-GW-does-not-matter}
	Let $R'$ be another regular ring and suppose that
	$A$ is equipped with an $R'$-algebra structure
	such  
	that $(A,\sigma)$ is a separable projective
	$R'$-algebra with involution.
	Then $\aGW{A/R,\sigma,\veps}\cong \aGW{A/R',\sigma,\veps}$.
\end{prp}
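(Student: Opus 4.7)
The plan is to pass through the Balmer--Walter complex and exploit that it is essentially intrinsic to $(A,\sigma)$. Applying Theorem~\ref{TH:GW-well-defined} once over $R$ and once over $R'$ gives canonical isomorphisms $\aGW{A/R,\sigma,\veps}\cong \iaGW{A/R,\sigma,\veps}$ and $\aGW{A/R',\sigma,\veps}\cong \iaGW{A/R',\sigma,\veps}$. It therefore suffices to check that the two Balmer--Walter complexes on the right coincide as cochain complexes.

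Inspecting Section~\ref{sec:GW-Balmer}, the triangulated hermitian category $(\bDer{A},*,1,\omega)$ underlying the construction is built from $\rproj{A}$, the duality $P\mapsto\Hom_A(P,A)$ twisted by $\sigma$, and the canonical evaluation $\omega$, none of which refers to the base ring. Hence the only part of the Balmer--Walter data that can differ between $\iaGW{A/R,\sigma,\veps}$ and $\iaGW{A/R',\sigma,\veps}$ is the codimension filtration $\calD_0\supseteq \calD_1\supseteq\cdots$, which uses $R$-supports. The task is thus to show that a chain complex in $\bDer{A}$ has homology $R$-supported in codimension $\geq e$ if and only if the same holds for $R'$-supports.

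The key step, which I expect to be the main point, is the codimension identity
\[
\mathrm{codim}_R\,\supp_R M \;=\; \mathrm{codim}_{R'}\,\supp_{R'} M
\]
for every finitely generated right $A$-module $M$. To prove it, set $Z=\Cent(A)^{\{\sigma\}}$; by Subsection~\ref{subsec:Azumaya}, $Z$ is finite \'etale over both $R$ and $R'$, and $M$ is naturally a $Z$-module. A finite \'etale morphism of Noetherian rings preserves the heights of primes (combining going-down from flatness with going-up from finiteness), so for every $\frakq\in \Spec Z$ lying over $\frakp\in \Spec R$ one has $\hgt\frakq=\hgt\frakp$, and similarly over $\Spec R'$. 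Since $\supp_R M$ is the image of $\supp_Z M$ under $\Spec Z\to \Spec R$, both sides of the displayed equation equal $\mathrm{codim}_Z\,\supp_Z M$.

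This identity gives an equality of the two codimension filtrations, hence an equality of the entire Balmer--Walter complexes. Composing the devissage isomorphisms,
\[
\aGW{A/R,\sigma,\veps}\cong \iaGW{A/R,\sigma,\veps}=\iaGW{A/R',\sigma,\veps}\cong \aGW{A/R',\sigma,\veps},
\]
yields the desired isomorphism. The only real work is the codimension identity, which reduces to the standard behavior of finite \'etale extensions; the rest is formal intrinsicity of the triangulated hermitian structure.
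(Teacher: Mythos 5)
Your proof is correct and follows essentially the same route as the paper: both pass through Theorem~\ref{TH:GW-well-defined} and reduce the statement to the coincidence of the codimension filtrations on $\bDer{A}$, which is established via the finite \'etale extension $\Cent(A)^{\{\sigma\}}$ (the paper phrases this as a reduction to $R'=\Cent(A)^{\{\sigma\}}$ and cites going-down together with incomparability). One small remark: for the inequality $\hgt\frakq\le\hgt\frakp$ the relevant consequence of finiteness is incomparability rather than going-up, though your height-preservation claim for finite \'etale maps is of course correct.
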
 

\begin{proof}
	Let $R_1=\Cent(A)^{\{\sigma\}}$.
	Recall from~\ref{subsec:Azumaya}
	that $(A,\sigma)$ is Azumaya over $R_1$ and $R_1$ is 
	finite \'etale over $R$, hence regular. Since the structure
	morphism $R'\to \Cent(A)$ factors via $R_1$, it is enough
	to prove the proposition when $R'=R_1$.
	Now, 
	\emph{going down} and \emph{incomparability} for prime ideals
	\cite[Tags \href{https://stacks.math.columbia.edu/tag/00HS}{00HS},
	\href{https://stacks.math.columbia.edu/tag/00GT}{00GT}]{stacks_project}
	imply that an $R'$-module is $R'$-supported in codimension
	$e$ if and only if it is $R$-supported in codimension $e$.
	Consequently,	
	the filtration \eqref{EQ:codim-filtration} of $\bDer{A}$  is the same for $R$ and $R'$,
	so $\iaGW{A/R,\sigma,\veps}\cong
	\iaGW{A/R',\sigma,\veps}$.
	Theorem~\ref{TH:GW-well-defined} completes the proof.
\end{proof}

The proof of Theorem~\ref{TH:GW-well-defined} is given  in the next three sections:
Section~\ref{sec:transfer}  establishes a preliminary result about transfer in hermitian categories.
This is used in the construction of   $s=(s_n)_{n\geq -1}$, 
which   is given in Section~\ref{sec:GW-propositions}.
Finally,  Theorem~\ref{TH:GW-well-defined} is proved in Section~\ref{sec:proof-of-dd-desc}.

\section{Transfer in Hermitian Categories}
\label{sec:transfer}

We introduce a variation of \emph{transfer into the endomorphism ring} in hermitian categories,
which will play a key role in the definition of the isomorphism
$s$ of Theorem~\ref{TH:GW-well-defined}.
To that end, we first recall hermitian forms valued in bimodules with involution in
the sense of \cite{First_2015_general_bilinear_forms}. 
	
	Let $A$ be an $R$-algebra 
	(no involution on $A$ is assumed),
	and let $Z$ be an $(A^\op,A)$-progenerator,
	i.e., an $(A^\op,A)$-bimodule such that $Z_A$ is finite projective, $A_A$ is a summand of $Z^n$
	for some $n\in \N$, and $A^\op=\End_A(Z )$; see \cite[\S18B]{Lam_1999_lectures_on_modules_rings}
	for further details.
	Suppose further that  $r z=z r$ for all $r\in R$, $z \in Z$, 
	and let $\theta:Z\to Z$ be an $R$-automorphism
	(written exponentially)
	satisfying $(a^\op z b)^\theta=b^\op z^\theta a$ and $z^{\theta\theta}=z$
	for all $a,b\in A$, $z\in Z$. 
	Finally, let $\veps\in \mu_2(R)$.

	Following \cite[\S2]{First_2015_general_bilinear_forms},
	define a  \emph{$Z$-valued $\veps\theta$-hermitian} space\footnote{ 
		In  \cite{First_2015_general_bilinear_forms}, the name
		is ``(general) $\veps\theta$-symmetric bilinear space''.
	} over $A$ to be a pair $(P,\hat{f})$
	consisting of $P\in \rproj{A}$
	and a   biadditive map $\hat{f}:P\times P\to Z$
	satisfying $\hat{f}(xa,x'a')=a^\op \hat{f}(x,x')a'$
	and $\hat{f}(x,x')=\veps \hat{f}(x',x)^\theta$
	for all $a,a'\in A$, $x,x'\in P$.

	The pair $(Z,\theta)$ induces a hermitian structure $(*,\omega)$
	on $\rproj{A}$  such
	that  $\veps$-hermitian forms over $ \rproj{A} $ correspond to 
	$Z$-valued $\veps\theta$-hermitian forms: Given an object $P$
	and a morphism $\vphi$ in $\rproj{A}$,
	define
	$P^*$ to be $\Hom_A(P,Z )$ endowed with the right
	$A$-module structure given by $(\phi a)x=a^\op(\phi x)$ ($\phi\in P^*$, $a\in A$, $x\in P$),
	let $\vphi^*=\Hom_A(\vphi,Z )$
	and define $\omega_P:P\to P^{**}$ by $(\omega_P x)\phi=(\phi x)^\theta$ ($x\in P$, $\phi\in P^*$).
	Notice that $P^*\in \rproj{A}$ 
	and $\omega_P$ is an isomorphism because $Z$ is an $(A^\op,A)$-progenerator,
	see \cite[Lemmas~4.2, 4.4]{First_2015_morita_equiv_to_op}.
	If $(P,f)$ is an $\veps$-hermitian space over $(\rproj{A},*,\omega)$,
	then its corresponding $Z$-valued $\veps\theta$-hermitian form over $A$
	is $\hat{f}:P\times P\to Z$ given by $\hat{f}(x,y)=(fx)y$.

	\begin{example}
	Let $(A,\sigma)$ be an $R$-algebra with involution and let $M$ be an invertible $R$-module.
	Take    $Z= A\otimes M $, $\theta= \sigma\otimes \id_M $,
	and make $Z$ into an $(A^\op,A)$-bimodule
	by setting $a^\op\cdot z \cdot b:=a^\sigma z b$ ($a,b\in A$, $z \in Z$).
	Then $Z$-valued $\theta$-hermitian forms over $A$ are the same thing
	as $(A\otimes M,\sigma\otimes\id_M)$-valued $1$-hermitian forms over $(A,\sigma)$
	in the sense of Example~\ref{EX:line-bundle}.
	\end{example}

 	Let $(\catC,*,\omega)$ be any idempotent complete 
	(see \cite[\S6]{Buhler_2010_exact_categories}) hermitian $R$-category
	and let $P_0$
	be an object such that every object of $\catC$
	is a summand of $P_0^n$ for some $n\in\N$.
	By \cite[Lemmas~II.3.2.3, II.3.3.2]{Knus_1991_quadratic_hermitian_forms} (for instance),
	the functor 
	\begin{align*} 
	F=F_{P_0}:=\Hom_{\catC}(P_0,-) 
	\end{align*} 
	defines an equivalence from $\catC$ to $\rproj{E}$,
	where  $E:=\End_{\catC}(P_0)$. 
	Write $Z=F(P_0^*)=\Hom_\catC (P_0, P_0^*)$ and let $\theta:Z\to Z$
	be given by $\vphi^\theta =\vphi^*\circ \omega_{P_0}$.
	We make  $Z$ into an $(E^\op, E)$-bimodule by setting 
	$\alpha^\op \cdot \vphi \cdot \beta=\alpha^*\circ \vphi \circ \beta$.
	One readily checks that $\vphi^{\theta\theta}=\vphi$
	and $(\alpha^\op \vphi \beta)^\theta=\beta^\op \vphi^\theta \alpha$
	for all $\alpha,\beta\in E$, $\vphi\in Z$.
	Moreover, $Z$ is an   $(E^\op,E)$-progenerator.
	Indeed, $F  *:E^\op=\End_{\catC}(P_0)^\op\to \End_{E}(Z)$
	is an isomorphism, hence $\End_{E}(Z)=E^\op$,
	and since $P_0$ is isomorphic to  a summand of
	$(P_0^*)^n$ for some $n$, the right $E$-module $E =FP_0$
	is isomorphic to a summand of $Z^n=F(P_0^*)^n$ for the same $n$.
	
	Let 
	$(\tilde{*},\tilde{\omega})$ denote
	the hermitian structure on $\rproj{E}$ induced
	by $Z$ and $\theta$.
	There is an natural isomorphism $i:F*\to \tilde{*}F$ given  by
	\begin{align*}
	i_P: \Hom_\catC(P_0,P^*)&\to \Hom_E(\Hom_\catC(P_0,P),Z)\\ 
	\vphi&\mapsto 
	[\psi\mapsto \vphi^*\circ \omega_P\circ \psi]\nonumber
	\end{align*}
	(this is an isomorphism because
	$i_P=F\circ \Hom(\omega_P,P_0^*)\circ *$).
	It is straightforward to check that 
	\[(F,i): (\catC,*,\omega)\to (\calP(E),\tilde{*},\tilde{\omega})\]
	is a $1$-hermitian equivalence (see \ref{subsec:herm-cat}).
	We  call it the  
	\emph{$P_0$-transfer}. 
	One readily checks that if $(P,f)\in\Herm[\veps]{\catC}$ and $F(P,f)=(FP,f_1)$, then the induced
	$Z$-valued $\veps\theta$-hermitian form $\hat{f}_1:\Hom_{\catC}(P_0,P)\times \Hom_{\catC}(P_0,P)\to  \Hom_{\catC}(P_0,P_0^*)$
	is given by
	\begin{align*}
	\hat{f}_1(\vphi,\psi)= \vphi^*\circ \veps  f\circ \psi.
	\end{align*}

	\begin{remark}
		Ordinary transfer into the endomorphism
		ring, see \cite[Proposition~2.4]{Quebbemann_1979_hermitian_categories},
		requires one to specify  a unimodular $\gamma$-hermitian form $h_0$ on the object $P_0$.
		Given such a form,
		we can define an $R$-involution 
		$\sigma:E\to E$ by
		$\vphi^\sigma=  h_0^{-1}\vphi ^* h_0$
		and identify
		$Z=F(P_0^*)$ with 
		$E=FP_0$
		via 
		$Fh_0^{-1}$.
		Under this identification,
		$Z$-valued $\veps\theta$-hermitian forms
		become $ \gamma \veps$-hermitian forms over $(E,\sigma)$,
		and we get a $\gamma$-hermitian 
		equivalence $(\catC,*,\omega)\to (\rproj{E},*_{\sigma,R},\omega_{\sigma,R})$
		(notation as in Example~\ref{EX:line-bundle}),
		which  is precisely the transfer of  \cite[Proposition~2.4]{Quebbemann_1979_hermitian_categories}.
\end{remark}

\section{D\'evissage}
\label{sec:GW-propositions}

In this section, we
construct
the isomorphism $s=(s_e)_{e\geq -1}$
of Theorem~\ref{TH:GW-well-defined}.
We use the same notation as in Section~\ref{sec:GW-Balmer}.

When $e=-1$, we define $s_{-1}:W_\veps(A,\sigma)\to W^0_\veps(\bDer{A},*,1,\omega)$ 
to be the embedding-in-degree-$0$ homomorphism, 
which is an isomorphism by a theorem of Balmer \cite[Theorem~4.3]{Balmer_2001_Triangular_Witt_II}.
The case   $e\geq 0$ will occupy the rest of
this  section and  be concluded in Construction~\ref{CN:GW-isomorphism}.

\medskip

We begin by extending two results of Balmer and Walter \cite[\S\S6--7]{Balmer_2002_Gersten_Witt_complex}
from   $A=R$ to general $A$.

\begin{prp}\label{PR:derived-localization}
	There is an equivalence of triangulated hermitian  $R$-categories,
	\[
	\loc:\bDer[e]{A}/\bDer[e+1]{A}\xrightarrow{\sim} \coprod_{\frakp\in R^{(e)}}
	\bDer[e]{A_\frakp},
	\]
	the $\frakp$-component of which is given by localizing at $\frakp$.
	The $\frakp$-component of the implicit natural isomorphism  $\loc *\to *\loc$
	is the canonical isomorphism $(P^*)_\frakp\to (P_\frakp)^*$.
\end{prp}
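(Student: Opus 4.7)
The plan is to adapt the Balmer--Walter proof of the analogous statement for $A=R$ (\cite[Theorem~7.1]{Balmer_2002_Gersten_Witt_complex}) to the setting of an arbitrary separable projective $R$-algebra with involution, and to verify compatibility with the hermitian structure separately. The functor $\loc$ will be the direct sum of the ordinary triangulated localization functors $\loc_\frakp\colon \bDer[e]{A}\to \bDer[e]{A_\frakp}$, $X\mapsto X_\frakp$, with the comparison of dualities furnished by the canonical isomorphism $(P^*)_\frakp\to (P_\frakp)^*$ arising from flatness of $R\to R_\frakp$ and finite presentation of $P$.

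First I would check that $\loc$ is well-defined. For $X\in\bDer[e+1]{A}$ each homology $\HH_n(X)$ has underlying $R$-module supported at primes of height $\geq e+1$; no such prime is contained in a height-$e$ prime $\frakp$, so $X_\frakp\cong 0$ in $\bDer{A_\frakp}$. Hence each $\loc_\frakp$ kills $\bDer[e+1]{A}$ and descends to the Verdier quotient. Since the total homology of $X\in\bDer[e]{A}$ is a finitely generated $R$-module, its support meets $R^{(e)}$ in finitely many primes, so $\loc(X)$ lies in the coproduct.

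For the underlying triangulated equivalence, essential surjectivity amounts to the descent statement: given $Y\in\bDer[e]{A_\frakp}$, find $X\in \bDer[e]{A}$ with $X_\frakp\cong Y$ in $\bDer{A_\frakp}$. Using Corollary~\ref{CR:bDer-iso}, represent $Y$ by a bounded complex of finitely generated $A_\frakp$-modules; a standard Noetherian spreading-out argument (clearing denominators of generators, relations, and the projector idempotents defining the projective summand structure) produces $g\in R\setminus\frakp$ and a bounded complex $Y'$ of finitely generated projective $A_g$-modules whose $\frakp$-localization recovers $Y$. Lifting $Y'$ to a bounded complex $X$ of finitely generated $A$-modules with $X_g\cong Y'$, the ``extra'' homology of $X$ (at primes containing $g$ but not contained in $\frakp$) is supported in codimension $\geq e+1$ and hence vanishes in the Verdier quotient. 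For full faithfulness I would invoke Lemma~\ref{LM:derived-cat-base-change} to identify
\[\Hom_{\bDer{A_\frakp}}(X_\frakp,Y_\frakp)\cong \Hom_{\bDer{A}}(X,Y)\otimes R_\frakp\cong \varinjlim_{f\notin\frakp}\Hom_{\bDer{A_f}}(X_f,Y_f),\]
and match this with the calculus-of-fractions description of Verdier-quotient morphisms via the observation that, on the height-$e$ stratum, inverting morphisms with cone in $\bDer[e+1]{A}$ coincides with inverting elements of $R\setminus\frakp$.

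Finally, for the hermitian compatibility, the map $i_P\colon (P^*)_\frakp\to (P_\frakp)^*$, assigning $\phi\otimes 1$ to its $R_\frakp$-linear extension, is an isomorphism of right $A_\frakp$-modules (with the $\sigma$-twisted structure of Example~\ref{EX:line-bundle}) because $P$ is finitely presented and $R\to R_\frakp$ is flat; applied termwise it gives the required natural isomorphism $i\colon \loc *\to * \loc$, and the hermitian-functor axiom $i_{P^*}\circ \loc(\omega_P)=(i_P)^*\circ \omega_{\loc P}$ follows by unwinding the definition of $\omega$. I expect the main obstacle to be the spreading-out step in essential surjectivity, in particular the lifting of projector idempotents from $M_k(A_\frakp)$ to $M_k(A_g)$ and the verification that the resulting ``extra'' homology has the correct codimension; this is a standard Noetherian finite-presentation argument, eased here by $A$ being separable projective over $R$, so that a finite $A$-module is projective precisely when its underlying $R$-module is.
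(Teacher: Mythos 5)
Your overall strategy coincides with the paper's, which reduces the statement, via Corollary~\ref{CR:bDer-iso}, to the argument of \cite[Proposition~7.1]{Balmer_2002_Gersten_Witt_complex} (see also \cite[Theorem~5.2]{Gille_2007_graded_GW_complex}) and treats the hermitian compatibility as a routine check; your well-definedness step and the duality comparison via $(P^*)_\frakp\to(P_\frakp)^*$ are fine. The genuine gap is in your essential surjectivity argument: after spreading $Y$ out to $Y'$ over $A_g$ and clearing denominators to get a bounded complex $X$ of finite $A$-modules with $X_g\cong Y'$, it is \emph{not} true that the ``extra'' homology of $X$ is supported in codimension $\geq e+1$. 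It can sit anywhere on $V(g)$ --- for instance a stray direct summand $A/gA$ contributes support of codimension $1$ --- and, before shrinking $g$ further, even $\HH_\ast(Y')$ may have support components of codimension $<e$ that merely avoid $\frakp$. In particular the naive lift $X$ need not lie in $\bDer[e]{A}$ at all, so it does not define an object of the source of $\loc$ and ``vanishes in the Verdier quotient'' has no meaning for it. The standard repair (and what the cited proofs actually do) is a d\'evissage through homology rather than through complexes: a finite-length $A_\frakp$-module is the localization of a finite $A$-module annihilated by a power of $\frakp$ (take the image in it of a clearing-denominators model), hence $R$-supported on $V(\frakp)$, i.e.\ in codimension $\geq e$; one then reaches all of $\bDer[e]{A_\frakp}$ using the truncation triangles together with fullness.

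The second gap is in full faithfulness. Your appeal to ``the observation that, on the height-$e$ stratum, inverting morphisms with cone in $\bDer[e+1]{A}$ coincides with inverting elements of $R\setminus\frakp$'' is not an observation but essentially the proposition itself, and read literally it is false: the class of morphisms inverted in the Verdier quotient is far larger and is not attached to any single $\frakp$. The flat-base-change identification $\Hom_{\bDer{A_\frakp}}(X_\frakp,Y_\frakp)\cong\Hom_{\bDer{A}}(X,Y)_\frakp$ (Lemma~\ref{LM:derived-cat-base-change}, applicable thanks to Corollary~\ref{CR:bDer-iso}) is the right starting point and is exactly the paper's reduction, but the substantive step is missing: one must show that a morphism $X'\to Y$ in $\bDer[e]{A}$ whose localization at every $\frakp\in R^{(e)}$ vanishes --- equivalently, which generates a $\Hom$-module supported in codimension $\geq e+1$ --- is killed by precomposing with some $X''\to X'$ whose cone lies in $\bDer[e+1]{A}$ (a Koszul/parameter-type argument), and, dually, that every family of local morphisms is realized by a roof. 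That analysis of roofs is the heart of \cite[Proposition~7.1]{Balmer_2002_Gersten_Witt_complex} and of \cite[Theorem~5.2]{Gille_2007_graded_GW_complex}, and your outline does not supply it.
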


\begin{proof}
	It is routine to check that $\mathrm{loc}$ is a $1$-exact $1$-hermitian functor
	in the sense of \cite[\S4]{Balmer_2002_Gersten_Witt_complex}. 
	It remains  to check that
	$\loc$ is an equivalence of triangulated categories.
	Since the natural functor $\bDer{A}\to \bDer{\rmod{A}}$
	is an equivalence (Corollary~\ref{CR:bDer-iso}),
	this can be shown as in the proof of
	\cite[Proposition~7.1]{Balmer_2002_Gersten_Witt_complex}
	(see also the proof of \cite[Theorem~5.2]{Gille_2007_graded_GW_complex}). 
\end{proof}

Fix $\frakp\in R^{(e)}$ for the remainder
of the discussion.
Abusing the notation, we denote
the triangulated hermitian structure that $\sigma$
induces on $\bDer[e]{A_\frakp}$ by $(*,1,\omega)$
and write the corresponding shifted
hermitian structures
as $(D_n,\delta_n,\omega_n)_{n\in\Z}$.
We further define 
\[\calC(A_\frakp)\] to be the full subcategory of $\bDer[e]{A_\frakp}$
consisting of chain complexes $P $ supported in degrees
$0,\dots,e$
and satisfying  $\HH_i(P )=0$ for all $i\neq   0$.
Then $D_e$ restricts to a duality  on $\calC(A_\frakp)$,
because $\Ext^i_{A_\frakp}(M,A_\frakp)=0$ for any finite-length
$A_\frakp$-module $M$
and all $i\neq e$ \cite[Proposition~1.7(i)]{Bayer_2019_Gersten_Witt_complex_prerprint}.
Moreover, $\calC(A_\frakp)$ is abelian.
Indeed, $A_\frakp$ has global dimension at most $e=\dim R_\frakp$,
and so $\HH_0$
defines an equivalence from $\calC^0(A_\frakp)$ to $\fl{A_\frakp}$, the category
of finite-length $A_\frakp$-modules.
Thus, $(\calC(A_\frakp),D_e,\omega_e)$ is an exact hermitian category,
and   we may consider its Witt group.

\begin{prp}\label{PR:GW-reduction-I}
	There is
	an
	isomorphism 
	\[W_\veps(\calC(A_\frakp))\xrightarrow{\sim} W_\veps^0(\bDer[e]{A_\frakp},D_e,\delta_e,\omega_e)=
	W^e_\veps(\bDer[e]{A_\frakp})\]
	defined by regarding a Witt class in $W_\veps(\calC(A_\frakp))$
	as a Witt class in $W^e_\veps(\bDer[e]{A_\frakp})$.
\end{prp}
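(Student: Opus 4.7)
The plan is to adapt the argument of Balmer and Walter~\cite[\S6]{Balmer_2002_Gersten_Witt_complex}, which handles the case $(A,\sigma,\veps)=(R,\id_R,1)$. The only property of $R_\frakp$ used in their proof is the bound $\rgd(R_\frakp)\leq e=\dim R_\frakp$, and this persists for $A_\frakp$ by Lemma~\ref{LM:global-dim-of-Az}. Concretely, I will show that the inclusion $\iota\colon\calC(A_\frakp)\hookrightarrow \bDer[e]{A_\frakp}$ extends to a hermitian functor into the shifted triangulated hermitian category $(\bDer[e]{A_\frakp},D_e,\delta_e,\omega_e)$ and that it induces the desired isomorphism on Witt groups.

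First I would check well-definedness. The duality $D_e=T^e\circ *$ preserves $\calC(A_\frakp)$: for $P\in\calC(A_\frakp)$, $P$ is (quasi-isomorphic to) a projective resolution of the finite-length $A_\frakp$-module $\HH_0(P)$, the $\Ext$-vanishing $\Ext^i_{A_\frakp}(\HH_0(P),A_\frakp)=0$ for $i\neq e$ places the only nonzero homology of $P^*$ in degree $-e$, and so $T^eP^*$ lives in degrees $0,\dots,e$ with homology concentrated in degree $0$. Any metabolic $(P,f)\in\Herm[\veps]{\calC(A_\frakp)}$, witnessed by a short exact sequence $L\embeds P\onto L^{D_e}$, extends to a distinguished triangle in $\bDer[e]{A_\frakp}$ and is therefore metabolic in Balmer's triangulated sense. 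Hence $\iota$ descends to a well-defined group homomorphism $W_\veps(\calC(A_\frakp))\to W^e_\veps(\bDer[e]{A_\frakp})$.

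For surjectivity I would apply Balmer's sublagrangian reduction~\cite[Theorem~5.16]{Balmer_2000_Triangular_Witt_I} together with the canonical t-structure. Given a hermitian form $(P,f)\in \Herm[\veps]{\bDer[e]{A_\frakp}}$ with the shifted duality, the global dimension bound lets me replace $P$ by a quasi-isomorphic complex supported in degrees $0,\dots,e$; if the homology of $P$ is not concentrated in degree $0$, an appropriate canonical truncation will serve as a sublagrangian, and the reduction produces a Witt-equivalent hermitian form whose supporting complex has strictly narrower homological range. Iterating yields a representative in $\calC(A_\frakp)$. Injectivity is handled dually: a Lagrangian $L\to P$ in $\bDer[e]{A_\frakp}$ for $(P,f)\in\Herm[\veps]{\calC(A_\frakp)}$ can be truncated into $\calC(A_\frakp)$, where via the equivalence $\HH_0\colon\calC(A_\frakp)\xrightarrow{\sim}\fl{A_\frakp}$ it becomes an ordinary Lagrangian in the exact hermitian category. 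The main obstacle will be the careful choice of truncation degree and the sign bookkeeping needed to ensure that the truncated subcomplex is genuinely isotropic for $f$ at each step of the reduction; this is precisely the delicate computation already carried out for $A=R$ in~\cite[\S6]{Balmer_2002_Gersten_Witt_complex}, and I expect it to transfer essentially verbatim to our setting, as nothing in the argument depends on $A_\frakp$ being commutative.
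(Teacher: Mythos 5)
Your route is genuinely different from the paper's: the paper obtains the isomorphism by combining Balmer's agreement theorem \cite{Balmer_2001_Triangular_Witt_II} for the abelian category $\calC(A_\frakp)\simeq\fl{A_\frakp}$ with an explicit duality-preserving triangulated equivalence $\Tot:\bDer{\calC(A_\frakp)}\to\bDer[e]{A_\frakp}$ (resting on Keller's theorem and Corollary~\ref{CR:bDer-iso}), whereas you propose a direct reduction inside $\bDer[e]{A_\frakp}$. Your well-definedness and surjectivity steps are workable: for a symmetric space whose homology lies in degrees $-c,\dots,c$, the truncation $\tau_{\geq 1}$ is a sublagrangian and the Hom-vanishing hypotheses of Balmer's sub-Lagrangian reduction hold by t-structure arguments, since all the relevant Hom groups go from complexes with homology in degrees $\geq 1$ to complexes with homology in degrees $\leq -1$ (note, however, that 5.16 in \cite{Balmer_2000_Triangular_Witt_I} is the cone-construction definition, not the reduction theorem, and duality-preservation of $\calC(A_\frakp)$ uses the $\Ext$-concentration for finite-length modules, not only the global dimension bound).

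The genuine gap is in the injectivity step. First, $[P,f]=0$ in $W^e_\veps(\bDer[e]{A_\frakp})$ a priori only yields neutrality after stabilization by neutral spaces whose underlying complexes need not have homology concentrated in degree $0$; to start from an honest Lagrangian $\alpha:L\to P$ you must invoke (and verify the applicability of) Balmer's ``Witt-trivial if and only if neutral'' result, which your sketch does not address. Second, even given a Lagrangian triangle $L\to P\to D_eL\to TL$, the complex $L$ typically has homology in many degrees, paired by duality via $\HH_{-q}(D_eL)\cong\Ext^e_{A_\frakp}(\HH_q(L),A_\frakp)$; consequently ``truncating $L$ into $\calC(A_\frakp)$'' does not produce a Lagrangian --- the map $\HH_0(L)\to\HH_0(P)$ need not even be injective (its kernel is the image of $\Ext^e_{A_\frakp}(\HH_{-1}(L),A_\frakp)$). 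What one can prove is that the \emph{image} of $\HH_0(L)$ in $\HH_0(P)$ equals its own orthogonal, but this requires the long exact homology sequence of the triangle together with the degeneration of the duality spectral sequence, and none of this is the ``delicate computation already carried out for $A=R$'' that you defer to: the corresponding step of Balmer--Walter (their Lemma~6.4) is proved by the agreement-plus-derived-equivalence argument, i.e.\ essentially the paper's proof, not by truncating Lagrangians. As written, the injectivity half is therefore not a proof; either supply the missing reduction of stabilization and the $\HH_0$-image argument, or fall back on the agreement theorem as the paper does.
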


\begin{proof}
	The proof is in the spirit 
	of the proof of \cite[Lemma~6.4]{Balmer_2002_Gersten_Witt_complex}.

	Write $\calC:=\calC(A_\frakp)$.
	The hermitian structure $(D_e,\omega_e)$ on $\calC$
	induces triangulated hermitian structures
	on   $\bDer{\calC }$ and $\bHot{\calC}$ (the bounded homotopy category
	of $\calC$),
	both denoted  $(\tilde{D} ,1,\tilde{\omega} )$.  
	We represent objects of $\bDer{\calC }$ and $\bHot{\calC}$
	as bounded double chain complexes.
	Explicitly, the double complex $P_{\bullet,\bullet}=(P_{i,j},h_{i,j},v_{i,j})_{i,j\in \Z}$
	($h_{i,j}$ are the horizontal differentials and $v_{i,j}$
	are the vertical differentials) corresponds to
	$(\dots\to P_{1,\bullet}\to P_{0,\bullet}\to P_{-1,\bullet}\to\dots)$
	in $\bDer{\calC}$ or $\bHot{\calC}$. 
	Then
	$\tilde{D} P_{\bullet,\bullet} =(P^*_{-i,e-j},h_{1-i,e-j}^*,(-1)^e v_{-i,e+1-j}^*)_{i,j}$
	and $\tilde{\omega}_{P_{\bullet,\bullet}}=((-1)^{\frac{1}{2}e(e+1)}\omega_{P_{i,j}})_{i,j}$.

	It is routine to check that the total complex construction 
	\[\Tot: P_{\bullet,\bullet}\mapsto 
	\Big(\bigoplus_{i\in\Z} P_{i,n-i},\sum_i (h_{i,n-i}+(-1)^iv_{i,n-i})\Big)_{n\in\Z}\]
	defines a functor from $\bHot{\calC}$ to $\bDer[e]{A_\frakp}$.
	We claim that $\Tot$   extends via localization to $\bDer{\calC}$.
	To see this,
	let $H:\bDer{\calC}\to\bDer{\fl{A_\frakp}}$ denote the 
	exact functor induced by the equivalence $\HH_0:\calC\to \fl{A_\frakp}$, namely,
	$HP_{\bullet,\bullet}=(\HH_0(P_{i,\bullet}),\HH_0(h_{i,\bullet}))_{i\in \Z}$.
	Consider the natural transformation $t_P:\Tot P_{\bullet,\bullet}\to HP_{\bullet,\bullet}$
	given as the composition $(\Tot P_{\bullet,\bullet})_i\to P_{i,0}\to P_{i,0}/\im v_{i,1}=\HH_0(P_{i,\bullet})$
	in degree $i$.
	Since $\HH_j(P_{i,\bullet})=0$ for $j>0$,
	standard diagram chasing (e.g., as in the
	proof of \cite[\href{https://stacks.math.columbia.edu/tag/0E1R}{Tag 0E1R}]{stacks_project}) 
	shows that  $\HH_i(t_P):\HH_i(\Tot P_{\bullet,\bullet})\to \HH_i(HP_{\bullet,\bullet})$
	is an isomorphism.
	Thus, 
	$t_P:\Tot P_{\bullet,\bullet}\to HP_{\bullet,\bullet}$ is
	a natural quasi-isomorphism   in
	$\bHot{\rmod{A_\frakp}}$.
	Now, if $f :P_{\bullet,\bullet}\to P'_{\bullet,\bullet}$
	is a quasi-isomorphism in $\bHot{\calC}$,
	then $\HH_i (H f ):\HH_i( H P_{\bullet,\bullet})\to \HH_i (H P'_{\bullet,\bullet})$
	is an isomorphism
	in $\fl{A_\frakp}$,
	and thus, so is 
	$\HH_i(\Tot f ):\HH_i(\Tot P_{\bullet,\bullet})\to \HH_i(\Tot P'_{\bullet,\bullet})$. 
	This means that $\Tot f $ is a quasi-isomorphism,
	so  
	$\Tot$ extends via localization to a functor $\bDer{\calC }\to \bDer[e]{A_\frakp}$.

	Define 
	$i:\Tot   \tilde{D}\to D_e  \Tot$
	by 
	$i_{P_{\bullet,\bullet}}= (\bigoplus_{i\in\Z} (-1)^{ei}\id_{P_{i,e-n-i}^*} )_{n\in\Z}$.
	It is routine to check that 
	$i_{\tilde{D}P_{\bullet,\bullet}}\circ \Tot \tilde{\omega}_{P_{\bullet,\bullet}}=D_e i_{P_{\bullet,\bullet}}
	\circ \omega_{e,\Tot P_{\bullet,\bullet}}$
	and $(-1)^e Ti_{P_{\bullet,\bullet}}=   i_{T^{-1}P_{\bullet,\bullet}}$.
	Since $\Tot$ commutes with $T$ and preserves cones, 
	we conclude that
	\[
	(\Tot,i):(\bDer{\calC },\tilde{D},1,\tilde{\omega}) \to 
	(\bDer[e]{A_\frakp},D_e,\delta_e,\omega_e) .
	\]
	is   a $1$-exact $1$-hermitian functor in the sense of \cite[\S4]{Balmer_2002_Gersten_Witt_complex}.
	
	We claim that $\Tot: \bDer{\calC }
	\to \bDer[e]{A_\frakp}$ is an equivalence of triangulated
	categories.
	To see this, fix an inverse functor
	$F$ to $H:\bDer{\calC}\to \bDer{\fl{A_\frakp}}$,
	and let $G:\bDer[e]{A_\frakp}\to \bDer[e]{\rmod{A_\frakp}}$
	be the functor induced by the inclusion
	$\rproj{A_\frakp}\to \rmod{A_\frakp}$.
	Then $G$ is an equivalence
	by Corollary~\ref{CR:bDer-iso}.
	Given $M_\bullet\in \bDer{\fl{A}}$
	with $P_{\bullet,\bullet}:=FM_{\bullet}$,
	we observed above that there is a natural  isomorphism
	$\Tot FM_\bullet =\Tot P_{\bullet,\bullet}\to H P_{\bullet,\bullet}\cong M_\bullet$
	in $\bDer[e]{\rmod{A_\frakp}}$.
	Thus, $G\Tot F$ is equivalent to the natural
	functor $\bDer{\fl{A_\frakp}}\to \bDer[e]{\rmod{A_\frakp}}$.
	The latter is an equivalence of triangulated categories by 
	\cite[\S1.15, Lemma, Example~(b)]{Keller_1999_cyclic_homology},
	so  $\Tot$ is an equivalence as well.
	
 	To conclude, $(\Tot,i)$ 
 	induces an isomorphism
 	from $
 	W_\veps^0(\bDer{\calC },\tilde{D},1,\tilde{\omega})$ to 
 	$W_\veps^0(\bDer[e]{A_\frakp},D_e,\delta_e,\omega_e)
 	$.
 	In addition, by \cite[Theorem~4.3]{Balmer_2001_Triangular_Witt_II},
 	em\-bedding-\-in-\-deg\-ree-$0$ induces
 	an isomorphism
 	$W_\veps( \calC  ,D_e ,\omega_e)\to 
 	W_\veps^0(\bDer{\calC },\tilde{D},1,\tilde{\omega})$.
 	The composition of these two maps  is the map considered
 	in the proposition,  
	so we are done. 	
\end{proof}

Let $\calC^0(A_\frakp)$ denote the full subcategory
of semisimple objects in $\calC(A_\frakp)$;
it is   closed under $D_e$. Since the Jacobson radical of $A_\frakp$
is $A_\frakp   \frakp$ \cite[Lemma~1.5]{First_2022_octagon}, 
and since $A(\frakp)\cong A_\frakp/A_\frakp\frakp$ is semisimple artinian,
the full subcategory of semisimple objects in $\fl{A_\frakp}$ is
$\rproj{A(\frakp)}$.
As a result,
the equivalence $\HH_0:\calC(A_\frakp)\to \fl{A_\frakp}$
restricts to an equivalence
$\HH_0:\calC^0(A_\frakp)\to\rproj{A(\frakp)}$.

\begin{prp}\label{PR:GW-reduction-II}
	The inclusion $1$-hermitian functor $\calC^0(A_\frakp)\to \calC(A_\frakp)$
	induces an isomorphism
	$W_\veps(\calC^0(A_\frakp),D_e,\omega_e)\to W_\veps(\calC(A_\frakp),D_e,\omega_e)$.
\end{prp}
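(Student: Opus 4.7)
The plan is to establish separately that the induced map $W_\veps(\calC^0(A_\frakp)) \to W_\veps(\calC(A_\frakp))$ is injective and surjective.

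For injectivity, I would observe that $\calC^0(A_\frakp)$ is closed in $\calC(A_\frakp)$ under subobjects, quotients, and the duality $D_e$: subobjects and quotients of semisimple modules are semisimple, and closure under $D_e$ has already been noted. Consequently, if $(M,f) \in \Herm[\veps]{\calC^0(A_\frakp)}$ becomes metabolic in $\calC(A_\frakp)$ via a Lagrangian $L \hookrightarrow M$, then $L$, as a subobject of a semisimple object, already lies in $\calC^0(A_\frakp)$, so $(M,f)$ is metabolic in $\calC^0(A_\frakp)$.

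For surjectivity, I plan to use sublagrangian reduction in the abelian hermitian category equivalent to $(\calC(A_\frakp), D_e, \omega_e)$ via $\HH_0$. Transferring to $\fl{A_\frakp}$ with the $\sigma$-twisted duality $(-)^\vee := \Ext^e_{A_\frakp}(-, A_\frakp)$, the central identity to establish is $(M\frakp^j)^\perp = \ann_M(\frakp^j)$ for any unimodular form $f : M \to M^\vee$ and any $j \geq 0$. This comes from dualizing the short exact sequence $0 \to M\frakp^j \to M \to M/M\frakp^j \to 0$ to obtain $(M/M\frakp^j)^\vee = \ann_{M^\vee}(\frakp^j)$, combined with the fact that $f$ is $R_\frakp$-linear since $\sigma$ fixes the image of $R$ in $\Cent(A)$. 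Granting this, I would induct on the Loewy length $k$ of $M$, i.e., the smallest integer with $M\frakp^k = 0$. The base case $k \leq 1$ is immediate since then $M$ is semisimple. For $k \geq 2$, set $L = M\frakp^{k-1}$; then $L \cdot \frakp^{k-1} \subseteq M\frakp^k = 0$, so $L \subseteq \ann_M(\frakp^{k-1}) = L^\perp$, and $L$ is sublagrangian. The standard sublagrangian reduction lemma for Witt groups of abelian hermitian categories (see, e.g., \cite{Balmer_2005_Witt_groups}) gives $[M, f] = [L^\perp/L, \bar{f}]$ in $W_\veps(\fl{A_\frakp})$. Since $L^\perp \frakp^{k-1} = 0$, the quotient $L^\perp/L$ has Loewy length at most $k-1$, and the induction hypothesis applies.

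The main obstacle will be the careful verification of the identity $(M\frakp^j)^\perp = \ann_M(\frakp^j)$: one has to check that the $\sigma$-twisted duality $\Ext^e_{A_\frakp}(-, A_\frakp)$ really behaves as a Matlis-type duality on $\fl{A_\frakp}$ relative to the central ideal $\frakp$, and that the $\sigma$-twist does not interfere with the $R_\frakp$-action. Both facts ultimately reduce to $\sigma$ restricting to the identity on the image of $R$ in $\Cent(A)$, but the bookkeeping of module structures through the equivalence $\calC(A_\frakp) \simeq \fl{A_\frakp}$ must be done with care.
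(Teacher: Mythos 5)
Your surjectivity argument is sound and is in fact the standard d\'evissage mechanism: the identity $(M\frakp^{j})^{\perp}=\ann_M(\frakp^{j})$ does hold (the duality $\Ext^e_{A_\frakp}(-,A_\frakp)$ is exact on finite-length modules, is $R_\frakp$-linear because $\sigma$ fixes $R$, and exchanges the largest quotient killed by $\frakp^{j}$ with the largest submodule killed by $\frakp^{j}$), and induction on Loewy length via the sublagrangian $L=M\frakp^{k-1}$ works in the abelian category $\fl{A_\frakp}$, where sublagrangian reduction needs no admissibility hypotheses. Note that the paper itself does none of this: it simply quotes the d\'evissage theorem of Quebbemann, Scharlau and Schulte \cite{Quebbemann_1979_hermitian_categories}, so you are effectively re-proving (a special case of) that theorem.

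The genuine gap is in your injectivity step. Vanishing of $[M,f]$ in $W_\veps(\calC(A_\frakp))$ does \emph{not} mean that $(M,f)$ is metabolic; it means it is \emph{stably} metabolic: there exist metabolic spaces $(N_1,g_1),(N_2,g_2)$ in $\calC(A_\frakp)$ with $(M,f)\perp(N_1,g_1)\cong(N_2,g_2)$. The auxiliary objects $N_1,N_2$ need not be semisimple, so the observation that a Lagrangian of a semisimple object is semisimple does not apply, and in this category one cannot invoke ``Witt-trivial implies metabolic'' (Lagrangians here are typically not direct summands, and no cancellation theorem is available off the shelf). To close the gap you would have to do what Quebbemann--Scharlau--Schulte actually do: upgrade your radical-filtration reduction to a well-defined group homomorphism $W_\veps(\calC(A_\frakp))\to W_\veps(\calC^0(A_\frakp))$ --- i.e., show the reduction is independent of the choices made, additive on orthogonal sums, and sends metabolic forms to $0$ --- and then observe that it restricts to the identity on classes coming from $\calC^0(A_\frakp)$, which yields injectivity. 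That well-definedness (in particular, that the semisimple reduction of a metabolic form is Witt-trivial) is precisely the nontrivial content of the cited theorem and is missing from your proposal.
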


\begin{proof}
	This is a special case of   a 
	theorem of Quebbemann, Scharlau and Schulte \cite[Corollary~6.9, Theorem~6.10]{Quebbemann_1979_hermitian_categories}.
\end{proof}

We proceed with showing that $W_\veps(\calC^0(A_\frakp),D_e,\omega_e)$
is canonically isomorphic to $\tilde{W}(A(\frakp))=W_\veps(A(\frakp),\sigma(\frakp);\tilde{k}(\frakp))$, which will
finish the construction of $s_e$.
This was shown by Balmer and Walter \cite[Theorem~6.1]{Balmer_2002_Gersten_Witt_complex} in the case $A=R$,
but their proof does not extend to our more general situation.
We therefore take a different approach, which is based on 
the transfer hermitian functor of Section~\ref{sec:transfer}.
To that end, we introduce additional notation.

Let $K$  
denote  a minimal resolution of the $R_\frakp$-module $k(\frakp)$;
recall from \ref{subsec:homological} 
that $K=(K_i,d_i)_{i\in \Z}$ comes equipped with a morphism
$\alpha:K_0\to k(\frakp)$ which we use implicitly to identify
$\HH_0(K)$ with $k(\frakp)$.
The actual choice of $K$ will not matter in the end as long as $K$ is supported
in degrees $0,\dots,e$.
Write $K_A=A\otimes K$ and note that $\HH_0(K_A)=A(\frakp)$ and $K_A\in\calC^0(A_\frakp)$ by 
the flatness of $A$ over $R$.
We identify 
$\End_{\bDer{A_\frakp}}(K_A)$ with $A(\frakp)$  
via the isomorphism
\[
\End_{\calC(A_\frakp)}(K_A)\xrightarrow[\HH_0]{\sim} \End_A(A(\frakp))
\xrightarrow[\phi\mapsto \phi(1)]{\sim} A(\frakp).
\]
Now, as explained in Section~\ref{sec:transfer}, $ Z(K):=\Hom_{\bDer{A_\frakp}}(K_A,D_eK_A)$
is naturally an $(A(\frakp)^\op,A(\frakp))$-bi\-mo\-dule. Explicitly,
given $\overline{a},\overline{b}\in A(\frakp)$ which lift to
$a,b\in A$ and 
$\vphi\in Z(K)$,
we have $\quo{a}^\op \cdot \vphi\cdot \quo{b} = D_e(\ell_a\otimes \id_K) \circ\vphi\circ (\ell_b \otimes \id_K)$,
where $\ell_a :A_\frakp\to A_\frakp$ is left-multiplication by $a$.

\begin{prp}\label{PR:Koszul-transfer}
	Let $(*(K),\omega(K))$ denote
	the hermitian structure on $\rproj{A(\frakp)}$
	induced by the $(A(\frakp)^\op,A(\frakp))$-bi\-mo\-dule
	$Z(K)=\Hom_{\bDer{A_\frakp}}(K_A,D_eK_A)$ and
	the map $ \theta(K):Z(K)\to Z(K)$ given by $\vphi^{\theta(K)} = D_e \vphi \circ \omega_{e,K_A}$.
	Then $K_A$-transfer (see Section~\ref{sec:transfer})
	induces an exact $1$-hermitian equivalence 
	\[
	(F,j)=(F^{(K)},j^{(K)}):(\calC^0(A_\frakp),D_e,\omega_e)\to (\rproj{A(\frakp)},*(K),\omega(K)).
	\]
\end{prp}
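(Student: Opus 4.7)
The plan is to apply the generic $P_0$-transfer construction of Section~\ref{sec:transfer} to the hermitian category $(\calC^0(A_\frakp), D_e, \omega_e)$ with $P_0 := K_A$, and then verify that its output coincides on the nose with the data $(F^{(K)}, j^{(K)})$ described in the statement.

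First I would check that the hypotheses of Section~\ref{sec:transfer} are met. The category $\calC^0(A_\frakp)$ is abelian --- it is equivalent to $\rproj{A(\frakp)}$ via $\HH_0$, as recorded just before the proposition --- hence in particular idempotent complete. The duality $D_e$ restricts from $\calC(A_\frakp)$ to $\calC^0(A_\frakp)$: being an autoequivalence of the abelian category $\calC(A_\frakp)$, it preserves the subcategory of semisimple objects. The generation hypothesis that every object of $\calC^0(A_\frakp)$ is a summand of $K_A^n$ for some $n$ follows from $\HH_0(K_A) \cong A(\frakp)$ being a progenerator of $\rproj{A(\frakp)}$, together with the equivalence $\HH_0$.

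Next I would identify the outputs of the generic construction with those in the statement. The ring $E := \End_{\calC^0(A_\frakp)}(K_A)$ coincides with $\End_{\bDer{A_\frakp}}(K_A)$, since $\calC^0(A_\frakp)$ is a full subcategory of $\bDer{A_\frakp}$; this has already been identified with $A(\frakp)$ just before the proposition. The bimodule $\Hom_{\calC^0(A_\frakp)}(K_A, D_eK_A)$ from Section~\ref{sec:transfer} is literally $Z(K)$, and the twist $\vphi^\theta = D_e\vphi \circ \omega_{e, K_A}$ from that section is literally $\theta(K)$. The remaining check is that the bimodule formula $\alpha^\op \vphi \beta = D_e\alpha \circ \vphi \circ \beta$ of Section~\ref{sec:transfer} transports to the formula $\quo{a}^\op \cdot \vphi \cdot \quo{b} = D_e(\ell_a \otimes \id_K) \circ \vphi \circ (\ell_b \otimes \id_K)$ from the text; this reduces to observing that, under the identification $E \cong A(\frakp)$ via $\HH_0$ and evaluation at $1$, the class $\quo{a} \in A(\frakp)$ corresponds to the endomorphism $\ell_a \otimes \id_K$ of $K_A$, which follows from $\HH_0(\ell_a \otimes \id_K)(1) = \quo{a}$. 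Once these identifications are in place, the hermitian structures $(\tilde{*}, \tilde{\omega})$ and $(*(K), \omega(K))$ agree, and the output $(F, i)$ of the generic construction agrees with $(F^{(K)}, j^{(K)})$.

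Finally, exactness of $(F, j)$ is essentially automatic. The composition of $F = \Hom_{\calC^0(A_\frakp)}(K_A, -)$ with the identification $E = A(\frakp)$ is canonically isomorphic to $\HH_0: \calC^0(A_\frakp) \to \rproj{A(\frakp)}$ (via $\phi \mapsto \HH_0(\phi)(1)$), so $F$ is itself an equivalence of abelian categories, in particular exact. I do not foresee a serious obstacle; the main point requiring care is the routine but notationally involved matching of bimodule structures in the middle step.
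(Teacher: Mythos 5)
Your proposal is correct and takes essentially the same route as the paper: both reduce the statement to the generic transfer of Section~\ref{sec:transfer} applied with $P_0=K_A$, verifying that every object of $\calC^0(A_\frakp)$ is a summand of some $K_A^n$ (via the equivalence $\HH_0:\calC^0(A_\frakp)\to\rproj{A(\frakp)}$ and the fact that $A(\frakp)$ generates $\rproj{A(\frakp)}$) and that $F$ is exact. The only cosmetic difference is in the exactness step, which the paper gets from semisimplicity of the two abelian categories while you get it from the natural isomorphism $F\cong\HH_0$; both are valid, and your extra matching of the bimodule structures is just an unpacking of the identifications already made in the text preceding the proposition.
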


\begin{proof}
	This will follow from Section~\ref{sec:transfer}
	once we show that   every   object in   $\calC^0(A_\frakp)$ is a summand of
	$K_A^n$ for some $n\in\N$, and that $F$ is exact.
	The former is a consequence of the equivalence
	$\HH_0:\calC^0(A_\frakp)\to \rproj{A(\frakp)}$
	and the fact that every  module in $\rproj{A(\frakp)}$
	is a summand of $A(\frakp)^n$ for some $n\in\N$.  
	The exactness of $F$ is automatic
	because the categories $\calC^0(A_\frakp)$ and $\rproj{A(\frakp)}$
	are abelian and semisimple. 
\end{proof}

Now, in order to show that $W_\veps(\calC^0(A_\frakp),D_e,\omega_e)\cong \tilde{W}_\veps(A(\frakp))$,
it is enough to identify 
$(Z(K),\theta(K))$ with $(A\otimes \tilde{k}(\frakp),\sigma\otimes \id_{\tilde{k}(\frakp)})$. 
In the end, it will turn out that the resulting isomorphism
$W_\veps(\calC^0(A_\frakp))\to \tilde{W}_\veps(A_\frakp)$
is independent of $K$.

\medskip

We begin by applying the construction of $\calC^0(A_\frakp)$
with $(A,\sigma)=(R,\id_R)$ to form $\calC^0(R_\frakp)$,
which is equivalent to $\rproj{k(\frakp)}$ via $\HH_0$.
The   hermitian structure  of $\rproj{R_\frakp}$ is
denoted $(\vee,\zeta)$ and the induced
shifted hermitian structures
of  $\bDer{R_\frakp}$
are denoted $(\Delta_n,\delta_n,\zeta_n)_{n\in\Z}$. 
Base-changing along the structure homomorphism $R_\frakp\to A_\frakp$
induces a $1$-hermitian functor $(G,t):(\rproj{R_\frakp},\vee,\zeta)\to
(\rproj{A_\frakp},*,\omega)$. Explicitly,
for all $P\in \rproj{R_\frakp}$,
we have 
$GP=A\otimes P$ and 
$t_P:A\otimes P^\vee  \to (A\otimes P)^*$
is determined by
$(t_P(a\otimes \phi ))(a'\otimes x )=a^\sigma \cdot  (\phi x)\cdot   a'$
($\phi\in P^*$, $a,a'\in A$, $x\in P$).
This, in turn, induces a $1$-exact
$1$-hermitian functor $\bDer{R_\frakp}\to \bDer{A_\frakp}$,
which is also denoted $(G,t)$.
For $Q=(Q_i,d_i)_{i\in\Z}\in \bDer{R_\frakp}$, we write $GQ=A\otimes Q:=
(A\otimes Q_i,\id_A\otimes d_i)_{i\in \Z}$
as $Q_A$.

\begin{lem}\label{LM:Phi-P-Q-dfn}
	For every $P,Q\in \bDer{R_\frakp}$,
	define a natural transformation
	\[
	\Phi_{P,Q}:A\otimes \Hom_{\bDer{R_\frakp}}(P,\Delta_eQ) \to 
	\Hom_{\bDer{A_\frakp}}(P_A,D_eQ_A)
	\]
	by $\Phi_{P,Q}(a\otimes  u)=t_Q\circ (\ell_a \otimes u)$,
	where $\ell_a$ denotes left-multiplication   by $a$.
	Then:
	\begin{enumerate}[label=(\roman*)]
		\item 
		$\Phi_{P,Q}$ is an isomorphism.
		\item 
		$\Phi_{K,K}:A\otimes \Hom_{\bDer{R_\frakp}}(K,\Delta_e K)\to 
		\Hom_{\bDer{A_\frakp}}(K_A,D_eK_A)=Z(K)$
		is an isomorphism
		of $(A^\op,A)$-modules under which
		$\sigma\otimes \id $
		corresponds to
		$\theta(K)$ (see Proposition~\ref{PR:Koszul-transfer}).
		Here, we regard  $A\otimes \Hom_{\bDer{R_\frakp}}(K,\Delta_e K)$
		as an $(A^\op,A)$-bimodule
		by  setting $a^\op \cdot x\cdot a' = a^\sigma x a' $.
	\end{enumerate}
\end{lem}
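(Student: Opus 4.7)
The plan for part (i) is to recognize $\Phi_{P,Q}$ as the composition of two natural isomorphisms. Since $(A,\sigma)$ is a separable projective $R$-algebra, $A$ is projective over $R$, and hence $A_\frakp$ is flat over $R_\frakp$. Applying Lemma~\ref{LM:derived-cat-base-change} with the noetherian $R_\frakp$-algebras $R_\frakp$ and $A_\frakp$, and using Corollary~\ref{CR:bDer-iso} to identify $\bDer{R_\frakp}$ with $\bDer{\rmod{R_\frakp}}$ (and similarly for $A_\frakp$), produces a natural base-change isomorphism
\[
A\otimes\Hom_{\bDer{R_\frakp}}(P,\Delta_eQ)\xrightarrow{\sim}\Hom_{\bDer{A_\frakp}}\bigl(P_A,(\Delta_eQ)_A\bigr).
\]
The $1$-hermitian component $t$ of $(G,t)$, applied in the shifted setting, supplies a natural isomorphism $(\Delta_eQ)_A\xrightarrow{\sim}D_eQ_A$. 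Composing the two and unwinding the explicit formulas for the base-change map and for $t_Q$ recovers exactly the map $a\otimes u\mapsto t_Q\circ(\ell_a\otimes u)$, i.e.\ $\Phi_{P,Q}$.

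For part (ii), specialize to $P=Q=K$. The bimodule compatibility is a direct calculation: for $x=a_0\otimes u$ and $b,c\in A$ one has
\[
\Phi_{K,K}(b^\op\cdot x\cdot c)=t_K\circ(\ell_{b^\sigma a_0c}\otimes u),\qquad
b^\op\cdot\Phi_{K,K}(x)\cdot c=D_e(\ell_b\otimes\id_K)\circ t_K\circ(\ell_{a_0c}\otimes u),
\]
and these agree by the defining formula $(t_P(a\otimes\phi))(a'\otimes x)=a^\sigma(\phi x)a'$, using that $\phi x\in R\subseteq\Cent(A)$ commutes with $b$. For the involution compatibility $\Phi_{K,K}\circ(\sigma\otimes\id)=\theta(K)\circ\Phi_{K,K}$, unpacking the defining formula $\vphi^{\theta(K)}=D_e\vphi\circ\omega_{e,K_A}$ reduces the claim, for each $u:K\to\Delta_eK$ and $a\in A$, to the identity
\[
t_K\circ(\ell_{a^\sigma}\otimes u)=D_e\bigl(t_K\circ(\ell_a\otimes u)\bigr)\circ\omega_{e,K_A}
\]
in $\Hom_{\bDer{A_\frakp}}(K_A,D_eK_A)$. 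This follows by combining naturality of $t$ in the source with the shifted form of the $1$-hermitian identity for $(G,t)$, which asserts compatibility of $t$ with the double-dual natural transformations $\zeta_e$ and $\omega_e$.

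The main obstacle is bookkeeping: tracking the sign conventions in the shifted hermitian structures $(D_n,\delta_n,\omega_n)$ and $(\Delta_n,\delta_n,\zeta_n)$, the $\sigma$-twist on the first tensor factor, and the passage through the derived category. Both parts are essentially formal consequences of Lemma~\ref{LM:derived-cat-base-change} and the $1$-hermitian nature of $(G,t)$; the substance of the proof lies in verifying that the various natural isomorphisms match up to give the explicit formula $\Phi_{P,Q}(a\otimes u)=t_Q\circ(\ell_a\otimes u)$.
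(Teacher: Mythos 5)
Part (i) and the bimodule-compatibility computation in (ii) are fine and match the paper's argument (the paper checks right $A$-linearity directly and deduces left $A^\op$-linearity from the involution compatibility, but your direct check works too). The genuine gap is in the involution-compatibility step. The formal ingredients you invoke --- naturality of $t$ and the $1$-hermitian identity for $(G,t)$, which relates $t$ to $\zeta_e$ and $\omega_e$ --- do \emph{not} yield the identity
\[
t_K\circ(\ell_{a^\sigma}\otimes u)=D_e\bigl(t_K\circ(\ell_a\otimes u)\bigr)\circ\omega_{e,K_A};
\]
they only yield
\[
D_e\bigl(t_K\circ(\ell_a\otimes u)\bigr)\circ\omega_{e,K_A}
= t_K\circ\bigl(\ell_{a^\sigma}\otimes(\Delta_e u\circ\zeta_{e,K})\bigr).
\]
So after all the formal manipulations you are left with the claim that $\iota(u):=\Delta_e u\circ\zeta_{e,K}$ equals $u$ for every $u\in\Hom_{\bDer{R_\frakp}}(K,\Delta_e K)$, i.e.\ that every morphism $K\to\Delta_e K$ is symmetric for the shifted duality. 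This is not a formal consequence of $(G,t)$ being $1$-hermitian: it is a statement about the specific complex $K$ (a resolution of $k(\frakp)$) and the specific sign conventions in $(\Delta_e,\delta_e,\zeta_e)$, and a priori the answer could be $-u$, in which case the lemma as stated (and hence the d\'evissage) would fail by a sign.

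This is exactly where the paper's proof does real work: it observes that $\iota$ is an $R_\frakp$-linear involution of $\Hom_{\bDer{R_\frakp}}(K,\Delta_e K)\cong k(\frakp)$, hence $\iota=\pm\id$, and then settles the sign by replacing $K$ (using naturality of $\iota$ in $K$) with the Koszul complex of a regular sequence generating $\frakp_\frakp$ and exhibiting an explicit nonzero morphism $\vphi:K\to\Delta_e K$, given by $(\vphi_i k)k'=(-1)^{ei-\frac12 i(i-1)}\alpha(k\wedge k')$, which satisfies $\iota(\vphi)=\vphi$. Your proposal is missing this computation (or some substitute for it), so the involution part of (ii) --- the part that actually carries the sign information $(-1)^{e(e+1)/2}$ built into $\omega_e$ --- is not established.
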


\begin{proof}	
	(i) 
	The map $\Phi_{P,Q}$
	is a composition
	of the base-chance map $A\otimes \HH_0(P,\Delta_e Q)\to \Hom(A\otimes P,A\otimes \Delta_e Q)$
	and
	$\Hom(P_A,t_Q)$. The first map is an isomorphism
	by 	Lemma~\ref{LM:derived-cat-base-change} and the second is an isomorphism
	because $t_Q$ is.
	(Recall that
	$\bDer{R_\frakp}$ 
	and $\bDer{\rmod{R_\frakp}}$ are equivalent because $R_\frakp$ is regular,
	and $\bDer{A_\frakp}$ is equivalent to $\bDer{\rmod{A_\frakp}}$
	by Corollary~\ref{CR:bDer-iso}.)
	

	(ii) 
	The map $\Phi_{K,K}$ is an isomorphism by (i), because
	$K,\Delta_e K\in \calC(R_\frakp)$.
	It  is a  morphism of
	right $A$-modules
	because 
	$t_Q\circ ( \ell_{ab}\otimes u)=(t_Q\circ (  \ell_a \otimes u))\circ (  \ell_b\otimes \id)$
	for all $a,b\in A$.
	If we check that $\sigma\otimes \id$ corresponds to $\theta(K)$
	under $\Phi_{K,K}$, then this will imply that $\Phi_{K,K}$ is also 
	a  morphism of left $A^\op$-modules,
	because $a^\op \cdot z = (z^\theta a)^\theta$ for all $z\in Z$, $a\in A$,
	and likewise for $A\otimes \Hom_{\bDer{R_\frakp}}(K,\Delta_e K)$
	and $\sigma\otimes \id$.

	To see that $\sigma\otimes \id$ corresponds to $\theta(K)$
	under $\Phi_{K,K}$, define $\iota:\Hom_{\bDer{R_\frakp}}(K,\Delta_eK)\to
	\Hom_{\bDer{R_\frakp}}(K,\Delta_eK)$ by $\iota(u)=\Delta_e u\circ \zeta_{e,K}$.
	It is routine to check that 
	$\vphi\mapsto D_e\vphi\circ \omega_{e,K_A}$
	corresponds to $ \sigma  \otimes \iota$ under $\Phi_{K,K}$,
	so it is enough
	to show that $\iota$ is the identity.
	It is easy to check that $\iota$ is $R_\frakp$-linear
	and satisfies $\iota\circ \iota=\id$.
	Since  $\Hom_{\bDer{R_\frakp}}(K,\Delta_eK)\cong
	\Hom(\HH_0(K),\HH_0(\Delta_eK))=\Hom(k(\frakp),\Ext^e_{R_\frakp}(k(\frakp),R_\frakp))\cong k(\frakp)$,
	this means that $\iota\in \{\pm \id\}$.
	To finish, it remains
	to exhibit a nonzero
	$\vphi:K\to \Delta_eK$
	such that $\vphi= \iota(\vphi)$.	
	
	It is straightforward to check that $\iota$ is a natural transformation
	from the contravariant functor $K\mapsto \Hom_{\bDer{R_\frakp}}(K,\Delta_eK)$
	to itself, so we may replace $K$ with any 
	chain complex   quasi-isomorphic to it.
	In particular, we may assume that $K$ is
	the    Koszul complex $K(E,s)$  associated to
	a  regular
	sequence in $R_\frakp$ generating $\frakp_\frakp$; see~\cite[Proposition~1.9]{Bayer_2019_Gersten_Witt_complex_prerprint}
	and the preceding comment,
	for instance.
	Fix an isomorphism $\alpha:\bigwedge^e E\to R_\frakp$,
	and define $\vphi:K\to \RD_e K $ by $(\vphi_i k)k'=(-1)^{ei-\frac{1}{2}i(i-1)}\alpha({k\wedge k'})$
	for all $k\in  \bigwedge^i E$, $k'\in  \bigwedge^{e-i} E$.
	One readily checks that $\vphi\in\Hom_{\bDer{R_\frakp}}(K,\Delta_eK)$,
	$\vphi\neq 0$ and $\iota(\vphi)=\vphi$.
\end{proof}

Now, 
by the discussion in \ref{subsec:homological},
we have an isomorphism
$
\tilde{k}(\frakp) = \Ext_{R_\frakp}^e(k(\frakp),R_\frakp)\cong
\HH_0(\RD_e K )
$. This gives rise to an isomorphism
\begin{align}\label{EQ:k-frakp-iso}
\Hom_{\bDer{R}}(K,\RD_e K)\xrightarrow{\,\HH_0}\Hom(k(\frakp),\HH_0(\Delta_e K))
\xrightarrow{\phi\mapsto \phi(1) }\HH_0(\Delta_eK)\cong \tilde{k}(\frakp),
\end{align}
denoted $\beta^{(K)}$.
By Lemma~\ref{LM:Phi-P-Q-dfn}, 
this   induces an $(A^\op,A)$-module isomorphism
\begin{equation}
\label{EQ:Phi-P}
\Phi^{(K)}:=\Phi_{K,K}\circ (\id \otimes\beta^{(K)})^{-1}: 
A \otimes \tilde{k}(\frakp) 
\to
\Hom_{\bDer{A_\frakp}}(K_A,D_eK_A)=Z(K)
\end{equation}
under which
$\sigma \otimes\id_{\tilde{k}(\frakp)}$
corresponds to  
$\theta(K)$.
The map $(\Phi^{(K)})^{-1}$ is the desired
identification of $(Z(K),\theta(K))$
with $(A \otimes \tilde{k}(\frakp),\sigma\otimes\id_{\tilde{k}(\frakp)})$.
Combining this with Proposition~\ref{PR:Koszul-transfer},
we get an isomorphism
\begin{align}\label{EQ:dfn-of-Fp}
\tilde{F}^{(K)}:
W_\veps(\calC(A_\frakp),D_e,\omega_e)
\to W_\veps(A(\frakp),\sigma(\frakp);\tilde{k}(\frakp))= \tilde{W}_\veps(A(\frakp)).
\end{align}

\begin{prp}
	With the above notation, $\tilde{F}^{(K)}$ is independent of
	the projective resolution $K$ of $k(\frakp)$.
\end{prp}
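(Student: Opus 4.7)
The plan is to exhibit a hermitian natural isomorphism between the two composed hermitian equivalences $\calC^0(A_\frakp)\to \rproj{A(\frakp)}$ obtained from $K$ and $K'$; since such an isomorphism induces the same homomorphism on Witt groups, this would give $\tilde{F}^{(K)}=\tilde{F}^{(K')}$. The natural isomorphism will come from lifting $\id_{k(\frakp)}$ to a quasi-isomorphism $f:K\to K'$ and base-changing to $A_\frakp$.

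Concretely, since both $K$ and $K'$ are bounded complexes of finite projective $R_\frakp$-modules resolving $k(\frakp)$, Corollary~\ref{CR:bDer-iso} and standard homological algebra produce a morphism $f:K\to K'$ in $\bHot{\rproj{R_\frakp}}$ with $\HH_0(f)=\id_{k(\frakp)}$; any such $f$ is a homotopy equivalence. Applying $A\otimes_R-$ yields an isomorphism $f_A=\id_A\otimes f:K_A\to K'_A$ in $\calC^0(A_\frakp)$ with $\HH_0(f_A)=\id_{A(\frakp)}$. Under the $\HH_0$-identifications of $\End_{\bDer{A_\frakp}}(K_A)$ and $\End_{\bDer{A_\frakp}}(K'_A)$ with $A(\frakp)$, conjugation by $f_A$ is therefore the identity, so setting $\tau_P(\psi)=\psi\circ f_A$ defines an $A(\frakp)$-linear natural isomorphism $\tau:F^{(K')}\xrightarrow{\sim}F^{(K)}$ of transfer functors.

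The nontrivial part is verifying that $\tau$ is a hermitian natural isomorphism of the hermitian equivalences obtained by composing $(F^{(K)},j^{(K)})$ with $(\Phi^{(K)})^{-1}$ and $(F^{(K')},j^{(K')})$ with $(\Phi^{(K')})^{-1}$. Unwinding the transfer-form formula $\hat f_1(\vphi,\psi)=\vphi^*\circ\veps f\circ\psi$ from Section~\ref{sec:transfer}, this reduces to the identity $\alpha_f\circ\Phi^{(K')}=\Phi^{(K)}$ of $(A^\op,A)$-bimodule morphisms $A\otimes\tilde k(\frakp)\to Z(K)$, where $\alpha_f:Z(K')\to Z(K)$ sends $\phi$ to $D_ef_A\circ\phi\circ f_A$. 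Invoking the naturality of $t$ with respect to $f$ together with Lemma~\ref{LM:Phi-P-Q-dfn}, this in turn amounts to commutativity of the square
\[
\xymatrix{
\Hom_{\bDer{R_\frakp}}(K',\Delta_e K')\ar[r]^-{\beta^{(K')}}\ar[d]_-{\phi\,\mapsto\,\Delta_e f\circ\phi\circ f} & \tilde k(\frakp)\ar@{=}[d] \\
\Hom_{\bDer{R_\frakp}}(K,\Delta_e K)\ar[r]^-{\beta^{(K)}} & \tilde k(\frakp).
}
\]
Applying $\HH_0$ and recalling that the canonical identifications $\HH_0(\Delta_eK)\cong\Ext^e_{R_\frakp}(k(\frakp),R_\frakp)\cong\HH_0(\Delta_eK')$ used to define $\beta^{(K)}$ and $\beta^{(K')}$ are intertwined by $\HH_0(\Delta_ef)=\Ext^e_{R_\frakp}(\HH_0(f),R_\frakp)=\id$, together with $\HH_0(f)=\id_{k(\frakp)}$, the square commutes.

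The hardest step is this last bookkeeping through the several layers of definitions (the bimodule-valued transfer from Section~\ref{sec:transfer}, the base-change isomorphism $\Phi_{P,Q}$, and the canonical identification $\HH_0(\Delta_eK)\cong\Ext^e$), but once it is in place $\tau$ is a hermitian natural isomorphism of hermitian equivalences. The general fact that hermitian naturally isomorphic equivalences induce equal Witt-group maps then yields $\tilde{F}^{(K)}=\tilde{F}^{(K')}$, proving independence of the resolution.
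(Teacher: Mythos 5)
Your proposal is correct and follows essentially the same route as the paper: both lift $\id_{k(\frakp)}$ to an isomorphism $K\to K'$, show that precomposition with its base change to $A_\frakp$ gives an isometry (your hermitian natural isomorphism $\tau$), and reduce everything to the identity $\Hom_{\bDer{A_\frakp}}(\xi_A,D_e\xi_A)\circ \Phi^{(K')}=\Phi^{(K)}$, which is then checked via the naturality of $\Phi_{-,-}$ and the compatibility of $\beta^{(K)}$, $\beta^{(K')}$ under $\HH_0$. The only difference is cosmetic: you phrase the conclusion via the general fact that hermitian naturally isomorphic equivalences induce equal Witt-group maps, while the paper verifies the isometry directly on a representative hermitian space.
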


\begin{proof}
	Let $K'$ be another projective resolution of $k(\frakp)$ supported in degrees $0,\dots,e$,
	and let  $(P,f)\in\Herm[\veps]{\calC(A_\frakp)}$.
	We need to prove that $\tilde{F}^{(K)}[P,f]=\tilde{F}^{(K')}[P,f]$.
	
	By unfolding the definitions, we see that
	that $\tilde{F}^{(K)}[P,f]$ is represented by 
	the hermitian space $(\Hom_{\bDer{A_\frakp}}(K_A,P),g)$,
	where $\hat{g}$
	is the $A\otimes\tilde{k}(\frakp)$-valued
	pairing on $\Hom_{\bDer{A_\frakp}}(K_A,P)$ given by
	\[
	\hat{g}(\vphi,\psi)= (\Phi^{(K)})^{-1}(D_e\vphi\circ \veps f\circ \psi).
	\]
	Likewise, $\tilde{F}^{(K')}[P,f]$ is represented by
	$(\Hom_{\bDer{A_\frakp}}(K'_A,P),g')$ with $\hat{g}'$ given by
	$
	\hat{g}'(\vphi,\psi)= (\Phi^{(K')})^{-1}(D_e\vphi\circ \veps f\circ \psi)
	$ for all $\vphi,\psi\in \Hom_{\bDer{A_\frakp}}(K'_A,P)$.
	
	By 	\cite[Theorem~12.4]{Buhler_2010_exact_categories}
	there exists an isomorphism $\xi:K\to K'$ in $\bDer{R_\frakp}$
	such that $\HH_0(\xi)=\id_{k(\frakp)}$. 
	Then $\xi_A:K_A\to K'_A$ is an isomorphism
	in $\bDer{A_\frakp}$. 
	We will show that $\Hom_{\bDer{A_\frakp}}(\xi_A,P)$ is an isometry
	from $g'$ to $g$, thus proving that $\tilde{F}^{(K)}[P,f]=\tilde{F}^{(K')}[P,f]$.
	By the formulas for $g$ and $g'$, this amounts to showing
	that for all $\vphi,\psi\in\Hom_{\bDer{A_\frakp}}(K'_A,P)$, we have
	\begin{align*}
	(\Phi^{(K')})^{-1}(D_e\vphi\circ \veps f\circ \psi)
	&=
	(\Phi^{(K)})^{-1}(D_e(\vphi \circ \xi_A)\circ \veps f\circ (\psi\circ \xi_A)) .
	\end{align*}
	Since the right hand side equals 
	$(\Phi^{(K)})^{-1}(D_e\xi_A \circ (D_e\vphi\circ \veps f\circ \psi)\circ \xi_A)$,
	it is enough to check that
	$(\Phi^{(K')})^{-1}=
	(\Phi^{(K)})^{-1}\circ \Hom_{\bDer{A_\frakp}}(\xi_A,D_e\xi_A)$,
	or equivalently, that
	\begin{align}\label{EQ:equation-to-check-in-Prp}
	\Hom_{\bDer{A_\frakp}}(\xi_A,D_e\xi_A)\circ \Phi^{(K')}=
	\Phi^{(K)}.
	\end{align}

	By \eqref{EQ:Phi-P}, 
	in order to show \eqref{EQ:equation-to-check-in-Prp},	
	it is enough to show
	that the diagram
	\[
	\xymatrixcolsep{3.5pc}\xymatrix{
	A\otimes \tilde{k}(\frakp) \ar[r]^-{\id\otimes(\beta^{(K)})^{-1}} 
		\ar[rd]_-{\id\otimes (\beta^{(K')})^{-1}} &
	A \otimes\Hom_{\bDer{R_\frakp}}(K,\Delta_e K) 
		\ar[r]^{\Phi_{K,K}} &
	\Hom_{\bDer{A_\frakp}}(K_A,D_eK_A) \\
	&
	A \otimes \Hom_{\bDer{R_\frakp}}(K',\Delta_e K') 
		\ar[u]|{\id\otimes\Hom_{\bDer{R_\frakp}}(\xi,\Delta_e\xi)} 
		\ar[r]^{\Phi_{K',K'}} & 
	\Hom_{\bDer{A_\frakp}}(K'_A,D_eK'_A) 
		\ar[u]|{\Hom_{\bDer{A_\frakp}}(\xi_A,D_e\xi_A)}
	}	
	\]
	commutes. The right square commutes by the naturality
	of $\Phi_{-,-}$ in both inputs, so it is enough to prove that the triangle on the left
	commutes. This will follow if we show that 
	$\beta^{(K)}\circ \Hom_{\bDer{R_\frakp}}(\xi,\Delta_e \xi)
	=\beta^{(K')}$. By the definition of $\beta^{(K)}$
	in \eqref{EQ:k-frakp-iso},
	this will follow once we show that the diagram
	\[
	\xymatrix{
	\Hom_{\bDer{R_\frakp}}(K,\Delta_e K) \ar[r]^{\HH_0} &
	\Hom(k(\frakp),\HH_0(\Delta_e K)) \ar[r]^-{\phi\mapsto \phi(1)} &	
	\HH_0(\Delta_e K) \ar[r]^-{u^{-1}} &
	\tilde{k}(\frakp) \\
	\Hom_{\bDer{R_\frakp}}(K',\Delta_eK') \ar[r]^{\HH_0} 
		\ar[u]|{\Hom_{\bDer{R_\frakp}}(\xi,\Delta_e\xi)} &
	\Hom(k(\frakp),\HH_0(\Delta_e K')) \ar[r]^-{\phi\mapsto \phi(1)}
		\ar[u]|{\Hom(\id_{k(\frakp)},\HH_0(\Delta_e\xi))} &
	\HH_0(\Delta_e K') \ar[ru]_-{u'^{-1}} \ar[u]|{\HH_0\Delta_e\xi}
	}
	\]
	commutes.
	Here, $u$ is the isomorphism $\Ext^e_{R_\frakp}(k(\frakp),R_\frakp)\to \HH_{-e}(\Hom( K,R)) $,
	defined as in \ref{subsec:homological}, and similarly for $u'$.
	The left square commutes because the functor $\HH_0$ respects composition
	and $\HH_0(\xi)=\id_{k(\frakp)}$. That the middle square commutes
	is a straightforward computation. Finally, we noted in \ref{subsec:homological}
	that the  triangle on the right commutes. This completes the proof.
\end{proof}

We conclude by defining $s_e$ for $e\geq 0$.

\begin{construction}\label{CN:GW-isomorphism}
	For all $e\in\N\cup\{0\}$, define
	\[
	s'_e:\iGW{A,\sigma,\veps}_e= W_\veps^e(\bDer[e]{A}/\bDer[e+1]{A})
	\xrightarrow{\sim}\bigoplus_{\frakp\in R^{(e)}} \tilde{W}_\veps(A(\frakp) )=\GW{A,\sigma,\veps}_e 
	\]
	as the composition of the isomorphisms
	\begin{align*}
	\loc : & W_\veps^e(\bDer[e]{A}/ \bDer[e+1]{A}) 
	\to 
	\bigoplus_{\frakp\in R^{(e)}}W_\veps^e(\bDer[e]{A_\frakp}),
	\\
	\text{nat.map}^{-1}:&\bigoplus_{\frakp\in R^{(e)}}W_\veps^e(\bDer[e]{A_\frakp})
	\to
	\bigoplus_{\frakp\in R^{(e)}}W_\veps(\calC^0(A_\frakp)),
	\\
	\bigoplus F^{(\frakp)}: &
	\bigoplus_{\frakp\in R^{(e)}}W_\veps(\calC^0(A_\frakp))
	\to
	\bigoplus_{\frakp\in R^{(e)}}\tilde{W}_\veps(A(\frakp) ),
	\end{align*}
	constructed in    Propositions~\ref{PR:derived-localization} and \ref{PR:GW-reduction-I},
	and \eqref{EQ:dfn-of-Fp}, where
	we have written $F^{(\frakp)}$ for $\tilde{F}^{(K)}$ as it is independent
	of $K$. The second map is well-defined
	by Proposition~\ref{PR:GW-reduction-II}.
	Finally, set
	$s_e=(-1)^es'_e$.
\end{construction}

\section{Proof of Theorem~\ref{TH:GW-well-defined}}
\label{sec:proof-of-dd-desc}

We  use the notation of Sections~\ref{sec:GW-Balmer}
and \ref{sec:GW-propositions}.

Denote the $e$-th differentials of $\iaGW{A,\sigma,\veps}$
and $\aGW{A,\sigma,\veps}$ by $\dd_e$ and $\dd'_e$,
respectively. We need to show
that $\dd'_e\circ s_e=s_{e+1}\circ \dd_e$. The case $e=-1$
is routine and is left to the reader. We  
assume that $e\geq 0$ henceforth.
In this case, the theorem is equivalent
to showing that for every $\frakp\in R^{(e)}$, $\frakq\in R^{(e+1)}$
and $(V,f)\in\Herm[\veps]{A(\frakp),\sigma(\frakp);\tilde{k}(\frakp)}$
(notation as in Example~\ref{EX:line-bundle}),
we have
$
(s_{e+1}\dd_e s^{-1}_e[V,f])_{\frakq}=\partial_{\frakp,\frakq}[V,f]$,
or equivalently,
\begin{align}\label{EQ:what-to-prove}
(s'_{e+1}\dd_e s'^{-1}_e[V,f])_{\frakq}=-\partial_{\frakp,\frakq}[V,f],
\end{align}
with the convention that $\partial_{\frakp,\frakq}=0$ if
$\frakp\nsubseteq \frakq$.

\begin{proof}[Proof of \eqref{EQ:what-to-prove} when $\frakp\nsubseteq \frakq$.]
	By Proposition~\ref{PR:derived-localization},
	there is an isomorphism
	$\iGW{A,\sigma,\veps}_e\cong \bigoplus_{\fraka\in R^{(e)}}W^e_\veps(\bDer[e]{A_\fraka})$
	for all $e\geq 0$.
	Under this isomorphisms, the map $\dd_e:\iGW{A,\sigma,\veps}_e \to \iGW{A,\sigma,\veps}_{e+1}$
	corresponds to a map
	$\tilde{\dd}_e: \bigoplus_{\fraka\in R^{(e)}}W^e_\veps(\bDer[e]{A_\fraka})\to 
	\bigoplus_{\frakb\in R^{(e+1)}}W^{e+1}_\veps(\bDer[e]{A_\frakb})$.
	By Construction~\ref{CN:GW-isomorphism}, it is enough to show that the composition
	\begin{equation*} 
	W^e_\veps(\bDer[e]{A_\frakp})\xrightarrow{i}
	\bigoplus_{\fraka\in R^{(e)}}W^e_\veps(\bDer[e]{A_\fraka})\xrightarrow{\tilde{\dd}_e} 
	\bigoplus_{\frakb\in R^{(e+1)}}W^{e+1}_\veps(\bDer[e+1]{A_\frakb})
	\xrightarrow{p}
	W^{e+1}_\veps(\bDer[e+1]{A_\frakq}),
	\end{equation*}
	in which $i$ and $p$ are the evident embedding and projection,
	is $0$. 
	
	We show this by comparing to the Gersten--Witt complex of $(A_\frakq,\sigma_\frakq,\veps)$.
	Indeed, localization-at-$\frakq$ defines a functor $\bDer{A}\to\bDer{A_\frakq}$
	which respects the  
	triangulated hermitian structures
	and codimension filtrations \eqref{EQ:codim-filtration} on $\bDer{A}$
	and $\bDer{A_\frakq}$. Thus, it induces a morphism 
	$\iGW{A,\sigma,\veps}\to\iGW{A_\frakq,\sigma_\frakq,\veps}$.
	Applying  Proposition~\ref{PR:derived-localization} to $A_\frakq$,
	we see
	that $\iGW{A_\frakq,\sigma_\frakq,\veps}_e\cong \bigoplus_{\fraka\in R_\frakq^{(e)}}
	W^e_\veps(\bDer[e]{(A_\frakq)_\fraka})=\bigoplus_{\fraka\in R_\frakq^{(e)}:\fraka\subseteq \frakq}
	W^e_\veps(\bDer[e]{A_\fraka})$, and likewise $\calB^{A_\frakq,\sigma_\frakq,\veps}_{e+1}
	\cong W_\veps^{e+1}(\bDer[e+1]{A_\frakq})$.
	This gives rise to a commutative diagram 
	\[
	\xymatrix{
	\bigoplus_{\fraka\in R^{(e)}}W^e_\veps(\bDer[e]{A_\fraka}) \ar[r]^-{\tilde{\dd}_e} \ar[d]^{p'} & 
	\bigoplus_{\frakb\in R^{(e+1)}}W^{e+1}_\veps(\bDer[e+1]{A_\frakb}) \ar[d]^{p} 
	\\
	\bigoplus_{\fraka\in R^{(e)}:\fraka\subseteq\frakq}W^e_\veps(\bDer[e]{A_\fraka}) \ar[r]  &
	W_\veps^{e+1}(\bDer[e+1]{A_\frakq}) 
	}
	\]
	in which 
	$p'$ and $p$  are induced by the morphism 
	$\iGW{A,\sigma,\veps}\to\iGW{A_\frakq,\sigma_\frakq,\veps}$.
	It is straightforward to check that $p'$ and $p$ are just the evident
	projections (so $p$ agrees with $p$ from the previous paragraph). 
	Since $\frakp\nsubseteq \frakq$, this implies that
	$p'\circ p=0$, and thus $i\circ\tilde{\dd}_e\circ p=0$.
\end{proof}

We henceforth set our attention to the case
$\frakp\subseteq \frakq$.
It is easy to see that localizing at $\frakq$ is harmless,
so we further restrict to the case where $R$ is regular local and $\frakq$ is its maximal
ideal. 
We now evoke the notation   used in the construction
of $\partial_{\frakp,\frakq}$ in Section~\ref{sec:GW-second-res}. In
particular, 
\begin{align*}
S& =R/\frakp, & \tilde{S}&=\Ext^e_R(S,R), \\
\frakm &= \frakq/\frakp, &  
\tilde{\frakm}^{-1}&=\Ext^e_R(\frakm,R).
\end{align*}  
We also let $A_S=A\otimes S$ and $\sigma_S=\sigma\otimes\id_S$.
Note that $S$ is a $1$-dimensional domain,   and thus Cohen-Macaulay as an 
$R$-module. By \cite[Proposition~1.3(i)]{Bayer_2019_Gersten_Witt_complex_prerprint}, 
the modules $\tilde{S}$ and $\tilde{\frakm}^{-1}$
are $S$-torsion-free. Recall that localization-at-$\frakp$
defines an embedding of both $\tilde{S}$ and $\tilde{\frakm}^{-1}$
in $\tilde{k}(\frakp)=\Ext^e_{R_\frakp}(k(\frakp),R_\frakp)$,
and $\tilde{S}\subseteq \tilde{\frakm}^{-1}$ as $R$-submodules
of $\tilde{k}(\frakp)$.

\medskip

We introduce additional notation:
\begin{enumerate}[label=(n\arabic*)]
	\item $\calD=\bDer{A}$. 
	As in Section~\ref{sec:GW-Balmer}, the shifted hermitian structures
	of $\calD $ induced by $\sigma $ are   denoted
	$(D_n,\delta_n,\omega_n)_{n\in \Z}$. 
\end{enumerate}	
Contrary to the convention elsewhere in this work, unadorned $\Hom$-sets
are  taken in $\calD$.
If $\alpha:M\to N$ is a morphism in $\calD$,
we write $\Hom(\alpha,-):\Hom(N,-)\to \Hom(M,-)$ as $\alpha^\diamond$
and $\Hom( -,\alpha):\Hom(-,M)\to \Hom(-,N)$ as $\alpha_\diamond$;
these natural transformations are given by pre-composition
and post-composition with $\alpha$, respectively.
\begin{enumerate}[resume, label=(n\arabic*)]
	\item $\calD'=\bDer{A_\frakp}$. The shifted hermitian structures
	of $\calD'$ induced by $\sigma_\frakp$ are   denoted
	$(D'_n,\delta_n,\omega'_n)_{n\in \Z}$.
	\item  The shifted hermitian structures
	of $\bDer{R}$ induced by the involution $\id_R$ are   denoted
	$(\Delta_n,\delta_n,\zeta_n)_{n\in \Z}$ (as in Section~\ref{sec:GW-propositions}).
	\item $\calA$ is the full subcategory
of $\calD$ consisting of 
chain complexes $P$
supported in non-negative degrees and
satisfying $\HH_0(P)\in\rmod{A_S}$ and $\HH_i(P)=0$ for all $i\neq 0$;
	this is an abelian category, because
	$\HH_0:\calA\to \rmod{A_S}$ is an equivalence
	(Lemma~\ref{LM:global-dim-of-Az}, \cite[Theorem~12.4]{Buhler_2010_exact_categories}).
	
	\item $\calL$ is the full subcategory
	of $\rmod{A_S}$ consisting of $A_S$-modules which are $S$-torsion-free.
	It inherits an exact structure from $\rmod{A_S}$.
	
	\item $\calB$ is the full subcategory
	of $\calA$ consisting of complexes $P\in \calA$
	supported in degrees
	$0,\dots,e$ and
	satisfying $H_0(P)\in\calL$. It inherits an exact structure from the abelian category 
	$\calA$.
	
	\item \label{item:tilde-star} $(\tilde{*},\tilde{\omega})$
	is the exact hermitian structure on $\calL$
	obtained by  applying   Example~\ref{EX:line-bundle} to $(A_S,\sigma_S)$
using the \emph{possibly non-invertible} $S$-module $\tilde{S}$
and allowing modules in $\calL$, rather than $\rproj{A_S}$. 
	This is an  exact hermitian structure by 
	\cite[Proposition~1.5, Corollary~1.8]{Bayer_2019_Gersten_Witt_complex_prerprint}
	($\tilde{S}$ is a dualizing module for $S$ by \cite[Theorem~3.3.7(b)]{Bruns_1993_cohen_macaulay_rings}).	
	
	\item \label{item:L} $L$ is a minimal projective resolution of the $R$-module $S$. 
	\item \label{item:K} $K$ is a minimal projective resolution of the $R$-module $k(\frakq)$.
	\item \label{item:J} $J$ is a minimal projective resolution of the $R$-module $\frakm:=\frakq/\frakp$.
	\item $L_A=A\otimes L$, $K_A=A\otimes K$ and $J_A=A\otimes J$; they are objects of $\calA$
	because 
	Since $A$ is flat over $R$. We have right $A$-module 
	isomorphisms $\HH_0(L_A)\cong A_S$ and $\HH_0(K_A)\cong A(\frakq)$,
	which we suppress.
	\item \label{item:J-L-K-A-triangle} 
	The short exact sequence $\frakm\embeds S\onto k(\frakq)$ gives rise to a distinguished
	triangle
	\[
	J  \xrightarrow{\iota_0} L \xrightarrow{p_0} K \xrightarrow{q_0} TJ  
	\] in $\bDer{R}$.
	Tensoring with $A$ gives rise to a distinguished
	triangle in $\calD$:
	\begin{equation*}
	J_A \xrightarrow{\iota} L_A\xrightarrow{p} K_A\xrightarrow{q} TJ_A.
	\end{equation*}
	
	\item \label{item:F-functor} $F:=\Hom(L_A,-):\calA\to \rmod{A_S}$.
	The right $A_S$-module structure on $\Hom(L_A,P)$  arises
	from the $R$-algebra isomorphism  
	\[\End(L_A)\xrightarrow[\HH_0]{\sim} 
	 \End_A(A_S)\xrightarrow[\phi\mapsto\phi(1)]{\sim} A_S.\]
	Explicitly, if $\quo{a}\in A_S$ lifts to $a\in A$ and $f\in \Hom(L_A,P)$,
	then $ f\cdot  \quo{a} =f\circ (\ell_a\otimes\id_L)$, where $\ell_a:A\to A$
	is left-multiplication by $a$.  
	
	\item \label{item:Hom-L-DL} 
	We endow $\Hom(L_A,D_eL_A)$ with an $(A_S^\op,A_S)$-bimodule structure
	as follows: if $\quo{a},\quo{b}\in A_S$ lift to $a,b\in A$,
	then $\quo{a}^\op \cdot f\cdot  \quo{b} =D_e(\ell_a\otimes\id_L)\circ f\circ (\ell_b\otimes\id_L)$
	with $\ell_a$   as in \ref{item:F-functor}.  
	
	\item \label{item:Phi-L} As in \eqref{EQ:k-frakp-iso},
	there is an isomorphism $\beta:\Hom_{\bDer{R}}(L,\Delta_eL)\to \Ext^e_R(S,R)= \tilde{S}$.
	We use   $\Phi_{-,-}$ defined in Lemma~\ref{LM:Phi-P-Q-dfn}(i) 
	and $\beta$ to construct an 
	$R$-module isomorphism
	\begin{equation*}
\Phi^{(L)}:=\Phi_{L,L}\circ (\id\otimes \beta)^{-1}:A\otimes \tilde{S} \to \Hom(L_A,D_eL_A).
	\end{equation*}
	
	\item \label{item:K-prime}
	$K'=L_\frakp\in \bDer{R_\frakp}$. 
	It  is a projective resolution of the $R_\frakp$-module $k(\frakp)$.
	We shall see below (Lemma~\ref{LM:length-of-L}) 
	that $L$, and hence $K'$, is supported in degrees $0,\dots,e$,
	so $K'\in\calC^0(A_\frakq)$.
	Comparing \ref{item:Phi-L} with
\eqref{EQ:Phi-P} shows that
	\[ (\Phi^{(L)})_{\frakp}=\Phi^{(K')}:A\otimes \tilde{k}(\frakp)\to \Hom_{\bDer{A_\frakp}}(K'_A,D_eK'_A),  \]
	where $\Phi^{(K')}$ is as in  Section~\ref{sec:GW-propositions}.\footnote{
		Here, we are   using the fact
		that forming $\Hom$-sets and $\Ext$-groups of finitely presented $A$-modules
		commutes with localization at $\frakp$ up to a natural isomorphism,
		see  \cite[Theorems~2.38, 2.39]{Reiner_2003_maximal_orders_reprint}.
		Also, taking homology   commutes with localization at $\frakp$,
		because $R_\frakp$ is flat over $R$.
}
	In particular, we have an identification of  $(A(\frakp)^\op,A(\frakp))$-bimodules
	\[\Hom(L_A,D_eL_A)_\frakp= \Hom_{\calD'}(K'_A,D'_eK'_A).\]
	
\end{enumerate}

\medskip

We shall need several lemmas.

\begin{lem}\label{LM:F-H0-equiv-lem}
	There is a natural isomorphism 
	\[FP=\Hom(L_A,P)\cong \HH_0(P)\] for every 
	$P\in \calA$. 
	Moreover, there is an natural isomorphism $\Hom(K_A,Q)\cong \HH_0(Q)$
	for every $Q\in \calA$ with $\HH_0(Q)\cdot \frakq=0$
	such that  the induced
	isomorphism $\Hom(K_A,Q)\to \HH_0(Q)\to\Hom(L_A,Q)$ is $p^\diamond$
	(with $p$ as in~\ref{item:J-L-K-A-triangle}).
\end{lem}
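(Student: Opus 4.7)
The plan is to reduce both isomorphisms to the standard fact that, for bounded complexes of projectives which are projective resolutions of modules in degree $0$, morphisms in the derived category are computed by $\Hom$ on $\HH_0$. Both $L_A$ and $K_A$ lie in $\calA$: since $R$ is regular, $L$ and $K$ are bounded, and flatness of $A$ over $R$ gives $\HH_i(L_A)=\HH_i(K_A)=0$ for $i\neq 0$ together with $\HH_0(L_A)\cong A_S$ and $\HH_0(K_A)\cong A(\frakq)$. In particular, $L_A$ and $K_A$ are projective resolutions (as objects of $\calD$) of $A_S$ and $A(\frakq)$, respectively.

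First I would establish the general statement: for any $X\in\calA$ and any $P\in\calA$, the map
\[
\Hom_\calD(X,P)\;\longrightarrow\;\Hom_A(\HH_0(X),\HH_0(P)),\qquad f\mapsto \HH_0(f),
\]
is an $R$-module isomorphism, natural in both variables. Indeed, because $\HH_i(P)=0$ for $i\neq 0$, the canonical map $P\to \HH_0(P)[0]$ is a quasi-isomorphism, so $\Hom_\calD(X,P)\cong \Hom_\calD(X,\HH_0(P)[0])$; and since $X$ is a bounded complex of projectives supported in non-negative degrees, this equals $\HH_0(\Hom_A^\bullet(X,\HH_0(P)))$, which by the exactness of $X_1\to X_0\to\HH_0(X)\to 0$ is $\Hom_A(\HH_0(X),\HH_0(P))$.

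Applying this with $X=L_A$ and composing with the canonical isomorphism $\Hom_A(A_S,\HH_0(P))\cong\HH_0(P)$ (evaluation at $1$, using that $\HH_0(P)\in\rmod{A_S}$) yields the first isomorphism $FP=\Hom(L_A,P)\cong\HH_0(P)$. Applying it with $X=K_A$ and using that the hypothesis $\HH_0(Q)\cdot\frakq=0$ makes $\HH_0(Q)$ an $A(\frakq)$-module, so $\Hom_A(A(\frakq),\HH_0(Q))\cong\HH_0(Q)$ via evaluation at $\bar 1$, yields the second isomorphism $\Hom(K_A,Q)\cong\HH_0(Q)$.

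The compatibility with $p^\diamond$ then follows by naturality: the triangle of item~\ref{item:J-L-K-A-triangle} arises by tensoring $\frakm\embeds S\onto k(\frakq)$ with the flat $R$-algebra $A$, and applying $\HH_0$ gives that $\HH_0(p):A_S\to A(\frakq)$ is the canonical projection sending $1\mapsto\bar 1$. Under the identifications above, $p^\diamond:\Hom(K_A,Q)\to\Hom(L_A,Q)$ corresponds to the map $\Hom_A(A(\frakq),\HH_0(Q))\to\Hom_A(A_S,\HH_0(Q))$ given by pre-composition with $\HH_0(p)$, and both sides identify with $\HH_0(Q)$ via evaluation at $1$ (respectively $\bar 1$); these evaluations agree because $\HH_0(p)(1)=\bar 1$. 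There is no real obstacle here — the whole lemma is essentially bookkeeping around the derived-category identity $\Hom_\calD(X,P)\cong\Hom_A(\HH_0 X,\HH_0 P)$ for projective resolutions — the only point requiring care is keeping the naturality sharp enough to read off the $p^\diamond$ compatibility.
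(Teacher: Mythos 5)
Your proposal is correct and follows essentially the same route as the paper: the isomorphism is $\HH_0$ (i.e.\ full faithfulness of $\HH_0$ on $\calA$) followed by evaluation at $1$, with $\HH_0(Q)\cdot\frakq=0$ handling the $K_A$ case and the fact that $\HH_0(p)$ is the quotient $A_S\to A(\frakq)$ giving the $p^\diamond$ compatibility. The only difference is that you re-derive the full-faithfulness step directly via the quasi-isomorphism $P\to\HH_0(P)[0]$ and the homotopy-category computation of $\Hom$ out of a bounded complex of projectives (which, strictly speaking, uses the equivalence $\bDer{A}\simeq\bDer{\rmod{A}}$ of Corollary~\ref{CR:bDer-iso} to view $\HH_0(P)[0]$ in $\calD$), whereas the paper simply invokes the already-established equivalence $\HH_0:\calA\to\rmod{A_S}$.
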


\begin{proof}
	Recall from  \ref{subsec:homological} that the data of $L$
	includes a map $ \alpha_L:L_0\to S$ which we use to identify 
	$\HH_0(L)$ with $R$. Since $A$ is flat over $R$, this gives rise
	to an isomorphism $\HH_0(L_A)\to A_S$, denoted $\quo{(\alpha_L)_A}$.
	Let  $\psi_P: FP=\Hom(L_A,P)\to \HH_0(P)$ denote the composition
	\begin{equation}\label{EQ:Phi-L}
	\Hom (L_A,P)\xrightarrow{\HH_0}\Hom_{A }(A_S,\HH_0(P))\xrightarrow{\phi\mapsto \phi(1)} \HH_0(P),
	\end{equation}
	where in the first arrow, we used $\quo{(\alpha_L)_A}$ to
	identify
	$\HH_0(L_A)$ with  $A_S$.
	Explicitly, given $f:L_A\to P$, we have $\psi_P(f)=(\HH_0(f)\circ (\quo{(\alpha_L)_A})^{-1})(1_{A_S})$.
	The left arrow in \eqref{EQ:Phi-L} is a bijection because
	$\HH_0:\calA\to \rmod{A_S}$ is an equivalence, and the right
	arrow is a bijection because $\HH_0(P)\cdot  \frakp=0$
	and $A_S=A/A\frakp$.	
	Thus, $\psi_P$ is an isomorphism,
	and it is routine
	to check that it is natural in $P$.

	The natural homomorphism $\Hom(K_A,Q)\to \HH_0(Q)$ --- call it $\psi'_Q$ --- is defined similarly, 
	using the isomorphism 
	$\quo{(\alpha_K)_A} :\HH_0(K_A)\to A(\frakq)$ induced by $\alpha_K:K_0\to k(\frakq)$.
	It is an isomorphism because $\HH_0(Q)\cdot \frakq=0$.
	
	Checking the last assertion amounts to showing that $\psi'_Q=\psi_Q\circ p^\diamond$.
	Observe that 
	in the construction of the first distinguished triangle  in \ref{item:J-L-K-A-triangle},
	$p_0:L\to K$ is taken to be compatible with $\alpha_L:L_0\to S$
	and $\alpha_K:K_0\to k(\frakq)$, and thus $ \quo{(\alpha_K)_A} \circ \HH_0(p)= \gamma\circ \quo{(\alpha_L)_A} $, where $\gamma$ is the quotient map $A_S\to A(\frakq)$.
	Now,  for all $f:K_A\to P$, we have $\psi'_Q(f)=(\HH_0(f)\circ (\quo{(\alpha_K)_A})^{-1})(1_{A(\frakq)})
	=(\HH_0(f)\circ \HH_0(p)\circ (\quo{(\alpha_L)_A})^{-1})(1_{A_S})=
	(\HH_0(f\circ p)\circ (\quo{(\alpha_L)_A})^{-1})(1_{A_S})=\psi_Q(p^\diamond(f))$,
	as required.
\end{proof}

\begin{lem}\label{LM:duality-on-B}
	$(D_e,\omega_e)$ restricts to an exact hermitian structure on $\calB$.
\end{lem}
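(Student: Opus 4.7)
The plan is to transport the problem along the homology functor $\HH_0$ and reduce to the already-established exact hermitian structure $(\tilde{*},\tilde{\omega})$ on $\calL$ (item~\ref{item:tilde-star}).

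First, I would verify that $\HH_0$ restricts to an equivalence of exact categories $\calB \xrightarrow{\sim} \calL$. Essential surjectivity requires that every $M \in \calL$ admit an $A$-projective resolution of length $\le e$: since $M$ is $S$-torsion-free over the one-dimensional Cohen-Macaulay ring $S = R/\frakp$, it is maximal Cohen-Macaulay over $S$, so $\depth_R M = \depth_S M = 1$, and Auslander-Buchsbaum forces the projective dimension of $M$ over $R$ to equal $e$. Lemma~\ref{LM:global-dim-of-Az} then bounds the projective dimension of $M$ over $A$ by $e$. Short exact sequences in $\calB$ correspond under $\HH_0$ to short exact sequences in $\calL$, since $\calL$ is extension-closed in $\rmod{A_S}$.

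Second, I would identify $D_e$ on $\calB$ with $\tilde{*}$ on $\calL$. For $P \in \calB$ with $M := \HH_0(P)$, the complex $D_e P = T^e P^*$ is supported in degrees $0,\dots,e$ and
\[ \HH_i(D_e P) \cong \Ext^{e-i}_A(M, A) \]
with the $\sigma$-twisted right $A$-action. The adjunction $\Hom_A(A \otimes L, A) \cong \Hom_R(L, A) \cong A \otimes L^\vee$ applied to the resolution $L_A$ of $A_S$ gives
\[ \Ext^q_A(A_S, A) = 0 \; (q \ne e), \qquad \Ext^e_A(A_S, A) \cong A_S \otimes_S \tilde{S}, \]
as $(A_S^\op, A_S)$-bimodules with the $\sigma_S$-twisted right action used in item~\ref{item:tilde-star}. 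The Grothendieck change-of-rings spectral sequence
\[ E_2^{p,q} = \Ext^p_{A_S}(M, \Ext^q_A(A_S, A)) \Longrightarrow \Ext^{p+q}_A(M, A) \]
collapses to the single row $q = e$, yielding $\Ext^n_A(M, A) \cong \Ext^{n-e}_{A_S}(M, A_S \otimes_S \tilde{S})$ for all $n$. By the exactness of $\tilde{*}$ on $\calL$ (\cite[Proposition~1.5, Corollary~1.8]{Bayer_2019_Gersten_Witt_complex_prerprint}), these $\Ext_{A_S}$-groups vanish in positive degree and recover $\tilde{*}(M) \in \calL$ in degree zero. Hence $D_e P \in \calB$, and naturality in $P$ produces an isomorphism $\HH_0 \circ D_e \xrightarrow{\sim} \tilde{*} \circ \HH_0$ of functors $\calB \to \calL$.

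Finally, exactness of $D_e$ on $\calB$ follows by transport along the equivalence of the first step, and $\omega_e$ restricts from $\calD$ to $\calB$ by naturality, with the duality axiom inherited from $\calD$. The hard part will be the second step, specifically tracking the $\sigma$-twists through the chain of identifications so that the isomorphism $\Ext^e_A(A_S, A) \cong A_S \otimes_S \tilde{S}$ holds as $(A_S^\op, A_S)$-bimodules compatibly with the involutions, ensuring that the natural isomorphism $\HH_0 \circ D_e \cong \tilde{*} \circ \HH_0$ intertwines $\omega_e$ with $\tilde{\omega}$ rather than merely the underlying dualities.
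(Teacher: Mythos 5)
Your proposal is correct in substance but takes a genuinely different route from the paper's, which is much shorter: the paper quotes \cite[Propositions~1.5, 1.7]{Bayer_2019_Gersten_Witt_complex_prerprint} for the facts that $\Ext^i_A(M,A)=0$ for $i\neq e$ and that $\Ext^e_A(M,A)$ is $S$-torsion-free when $M\in\calL$ (this gives $D_e(\calB)\subseteq\calB$ at once), and then gets exactness of $D_e$ directly from the long exact $\Ext$-sequence of a conflation in $\calB$ together with the exact equivalence $\HH_0:\calA\to\rmod{A_S}$ --- no identification of $D_e|_{\calB}$ with $\tilde{*}$ is needed at this point. You instead first establish the equivalence $\HH_0:\calB\to\calL$ (this is exactly the paper's Lemma~\ref{LM:B-L-equiv}, proved by the same Auslander--Buchsbaum argument, and it does not depend on the present lemma, so there is no circularity) and then identify $\HH_0\circ D_e$ with $\tilde{*}\circ\HH_0$ via the change-of-rings spectral sequence; this buys more structural insight --- in effect a weak form of the later Lemmas~\ref{LM:Phi-L-AS-linear} and~\ref{LM:equiv-of-L-and-B} --- at the cost of extra bookkeeping. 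Two caveats on your write-up. First, the compatibility you single out as ``the hard part'' (intertwining $\omega_e$ with $\tilde{\omega}$) is not needed for this lemma: $\omega_e$ is a natural isomorphism on all of $\bDer{A}$ and restricts automatically once $D_e$ preserves $\calB$, and for exactness you only need the identification of the underlying functors (indeed, only an isomorphism $\Ext^e_A(A_S,A)\cong A\otimes\tilde{S}$ of one-sided $A_S$-modules, so that the exactness of $\tilde{*}$ from item~\ref{item:tilde-star} can be quoted, while $S$-torsion-freeness of $\HH_0(D_eP)$ concerns only the $S$-structure); the hermitian compatibility is what the paper needs later, for Lemma~\ref{LM:equiv-of-L-and-B}, where it is handled via $\Phi^{(L)}$, localization at $\frakp$ and the Koszul computation in Lemma~\ref{LM:Phi-P-Q-dfn}. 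Second, your claim that $\Ext^{i}_{A_S}(M,A\otimes\tilde{S})=0$ for $i>0$ ``by the exactness of $\tilde{*}$'' is not a valid inference (exactness of a duality functor does not yield $\Ext$-vanishing); the vanishing is true, but it is also unnecessary: in degrees above $e$ the groups $\Ext^n_A(M,A)$ vanish because the projective dimension of $M$ over $A$ is at most $e$ (Auslander--Buchsbaum and Lemma~\ref{LM:global-dim-of-Az}), and the lemma only uses vanishing in degrees below $e$, which the collapsed spectral sequence gives for free.
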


\begin{proof}
	By \cite[Propositions~1.5, 1.7]{Bayer_2019_Gersten_Witt_complex_prerprint},
	for all $M\in\calL$, we have $\Ext^i_A(M,A)=0$ if $i\neq e$,
	and $\Ext^e_A(M,A)$ is $S$-torsion-free, so $(D_e,\omega_e)$
	restricts to a hermitian structure on $\calB$. 
	To see that $D_e$ is exact,
	consider a short exact sequence $P'\embeds P\onto P''$   in $\calB$.
	Since $\Ext^{e-1}_A(\HH_0(P'),A)=0$
	and $\Ext^{e+1}_A(\HH_0(P''),A)=0$,
	we have a short exact sequence
	$\Ext^e_A(\HH_0(P''),A)\embeds
	\Ext^e_A(\HH_0(P),A)\onto \Ext^e_A(\HH_0(P'),A)$.
	This sequence is isomorphic to 
	$\HH_0(D_eP'')\to  \HH_0(D_eP)\to \HH_0(D_eP')$, so the latter
	is also short exact sequence in $\rmod{A_S}$.
	Since  $\HH_0:\calA\to \rmod{A_S}$ is an exact equivalence,
	it follows that $D_eP'\to D_eP\to D_eP''$ is a
	short exact sequence in $\calB$.
\end{proof}

\begin{lem}\label{LM:B-L-equiv}
	Both $\HH_0$ and $F=\Hom(L_A,-)$ restrict
	to an exact equivalence from $\calB$ to $\calL$.
\end{lem}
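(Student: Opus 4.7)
The plan is to first reduce to the functor $\HH_0$ and then verify the three properties (essential surjectivity, fully faithfulness, and biexactness) by invoking the equivalence $\HH_0:\calA\to\rmod{A_S}$ of Lemma~\ref{LM:F-H0-equiv-lem} together with a projective-dimension computation based on Auslander--Buchsbaum.

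First, by Lemma~\ref{LM:F-H0-equiv-lem}, the natural transformation $\psi:F\to \HH_0$ is an isomorphism of functors $\calA\to\rmod{A_S}$. Hence the restrictions of $F$ and $\HH_0$ to $\calB$ are naturally isomorphic, and it suffices to prove the statement for $\HH_0$. Since $\HH_0:\calA\to\rmod{A_S}$ is an equivalence of abelian categories and $\calL$ is a full subcategory of $\rmod{A_S}$, the restricted functor $\HH_0:\calB\to\calL$ is tautologically fully faithful and lands in $\calL$ by definition of $\calB$.

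The crux is essential surjectivity: given $M\in\calL$, we must show $M\cong \HH_0(P)$ for some $P\in\calB$, equivalently that $M$ admits a projective resolution of length $\leq e$ over $A$. By Lemma~\ref{LM:global-dim-of-Az}, it is enough to bound $\mathrm{pd}_R M\leq e$. We recall that $R$ is regular local of dimension $e+1$, so Auslander--Buchsbaum gives $\mathrm{pd}_R M=e+1-\depth_R M$. Since $M$ is $S$-torsion-free and $S=R/\frakp$ is a one-dimensional Cohen--Macaulay local domain with maximal ideal $\frakm$, any nonzero element of $\frakm$ is a non-zero-divisor on $M$, so $\depth_S M=1$, and since $\frakp M=0$ this equals $\depth_R M$. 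Therefore $\mathrm{pd}_R M=e$, so a minimal projective resolution of $M$ over $A$ is supported in degrees $0,\dots,e$, and representing $M$ by such a resolution produces an object of $\calB$ with $\HH_0\cong M$.

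It remains to check that $\HH_0:\calB\to\calL$ is biexact. The exact structures on $\calB$ and $\calL$ are by definition those inherited from $\calA$ and $\rmod{A_S}$, i.e., a short exact sequence in $\calB$ (resp.\ $\calL$) is simply a short exact sequence in $\calA$ (resp.\ $\rmod{A_S}$) whose terms happen to lie in $\calB$ (resp.\ $\calL$). Since $\HH_0:\calA\to\rmod{A_S}$ is an equivalence of abelian categories, it preserves and reflects short exact sequences, and the preimage of any short exact sequence in $\calL$ has all terms in $\calB$ by essential surjectivity of $\HH_0:\calB\to\calL$ applied termwise. This gives the biexactness, concluding the proof. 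The main obstacle is the projective-dimension bound in the previous paragraph; everything else is formal from Lemma~\ref{LM:F-H0-equiv-lem}.
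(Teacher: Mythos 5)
Your proposal is correct and follows essentially the same route as the paper: reduce to $\HH_0$ via the natural isomorphism of Lemma~\ref{LM:F-H0-equiv-lem}, get full faithfulness and exactness formally from the equivalence $\HH_0:\calA\to\rmod{A_S}$, and settle essential surjectivity by bounding the projective dimension of modules in $\calL$ by $e$ using Auslander--Buchsbaum together with Lemma~\ref{LM:global-dim-of-Az}. The only cosmetic difference is that you verify $\depth_R M=1$ directly from $S$-torsion-freeness, where the paper cites the fact that nonzero finite $S$-torsion-free modules are Cohen--Macaulay of dimension $1$.
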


\begin{proof}
	Since $\HH_0\cong F$ as functors from $\calA$ to $\rMod{A_S}$
	(Lemma~\ref{LM:F-H0-equiv-lem}), it is enough to consider $\HH_0$.
	We already know  that $\HH_0:\calA\to \rmod{A_S}$ is an exact equivalence,
	and $\HH_0$ maps to $\calB$ to $\calL$ by the definition of $\calB$,
	so it remains to check that $\HH_0:\calB\to \calL$ is essentially
	surjective. 
	By \cite[Proposition~1.5]{Bayer_2019_Gersten_Witt_complex_prerprint},
every nonzero finite $S$-torsion-free $S$-module   is Cohen-Macaulay of dimension $1$.
Thus, by  the Auslander--Buchsbaum formula \cite[Theorem~1.3.3]{Bruns_1993_cohen_macaulay_rings} 
and Lemma~\ref{LM:global-dim-of-Az},
every right $A$-module in $\calL$ has projective dimension at most $e$,
and is therefore isomorphic to $\HH_0(P)$ for some $P\in \calB$.
\end{proof}

\begin{lem}\label{LM:length-of-L}
	$L$ and $J$ (see~\ref{item:L} and~\ref{item:J}) are supported in degrees $0,\dots,e$,
	and $L_A,J_A\in \calL$.
\end{lem}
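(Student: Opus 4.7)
The statement has two parts: a support claim for the minimal resolutions $L$ and $J$, and a torsion-freeness claim for $\HH_0(L_A)$ and $\HH_0(J_A)$. Since $L$ and $J$ are minimal projective resolutions over the regular local ring $R$ of dimension $e+1$, their lengths equal the projective dimensions $\mathrm{pd}_R S$ and $\mathrm{pd}_R \frakm$, so the first claim reduces to showing $\mathrm{pd}_R S = \mathrm{pd}_R \frakm = e$. The plan is to compute these via the Auslander--Buchsbaum formula $\mathrm{pd}_R M = \dim R - \depth_R M$.

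For $S = R/\frakp$: since $\frakp$ has height $e$ and $R$ is Cohen--Macaulay, $\dim S = 1$; being a one-dimensional domain, $S$ is Cohen--Macaulay with $\depth_R S = 1$, so $\mathrm{pd}_R S = e$. For $\frakm = \frakq/\frakp$: apply the depth lemma to the short exact sequence $0 \to \frakm \to S \to k(\frakq) \to 0$. Using $\depth_R S = 1$ and $\depth_R k(\frakq) = 0$, this gives $\depth_R \frakm \geq \min(1,0+1) = 1$; conversely $\depth_R \frakm \leq \dim_R \frakm = 1$ since $\frakm$ is a nonzero ideal of $S$ with $\dim S = 1$. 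Hence $\depth_R \frakm = 1$ and $\mathrm{pd}_R \frakm = e$.

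For the second claim, note that $\HH_0(L_A) = A_S = A \otimes_R S$ and $\HH_0(J_A) = A \otimes_R \frakm$. Because $A$ is separable projective over $R$, it is in particular a projective $R$-module, hence a direct summand of $R^n$ for some $n$. Tensoring with $S$ exhibits $A_S$ as a direct summand of $S^n$, which is $S$-torsion-free because $S$ is a domain; hence $A_S \in \calL$. Since $A$ is flat over $R$, the inclusion $\frakm \embeds S$ tensors to an embedding $A \otimes \frakm \embeds A_S$, so $A \otimes \frakm$ is $S$-torsion-free as a submodule of the $S$-torsion-free module $A_S$, giving $A \otimes \frakm \in \calL$. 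Combined with the support claim, this yields $L_A, J_A \in \calB$, and under the equivalence $\HH_0 : \calB \to \calL$ of Lemma~\ref{LM:B-L-equiv}, the conclusion $L_A, J_A \in \calL$ follows.

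There is no serious obstacle here: the two ingredients---the Auslander--Buchsbaum computation and the flatness/projectivity of $A$ over $R$---are entirely standard. The only mild subtlety is verifying $\depth_R \frakm = 1$, which requires the depth lemma together with the nonvanishing of $\frakm$ (guaranteed by the strict containment $\frakp \subsetneq \frakq$).
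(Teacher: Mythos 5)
Your proof is correct, and at bottom it rests on the same pillar as the paper's argument: the Auslander--Buchsbaum formula over the regular local ring $R$. The paper's own proof is a one-liner: it applies Lemma~\ref{LM:B-L-equiv} with $A=R$ (whose proof uses the cited fact that nonzero finite $S$-torsion-free modules are Cohen--Macaulay of dimension $1$, together with Auslander--Buchsbaum) to conclude that $S$ and $\frakm$ lie in the image of $\HH_0:\calB\to\calL$, i.e.\ have projective dimension at most $e$ over $R$, whence the minimal resolutions $L$ and $J$ are supported in degrees $0,\dots,e$. You instead compute $\depth_R S=\depth_R\frakm=1$ by hand, using the depth lemma on $\frakm\embeds S\onto k(\frakq)$ rather than the torsion-free-implies-Cohen--Macaulay input; this is a fair trade: slightly longer, but self-contained, and it makes explicit where $\frakp\subsetneq\frakq$ (i.e.\ $\frakm\neq 0$) is needed. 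You also spell out the second assertion --- that $\HH_0(L_A)\cong A_S$ and $\HH_0(J_A)\cong A\otimes\frakm$ are $S$-torsion-free (summand of $S^n$, respectively a submodule of $A_S$ by flatness), so that $L_A,J_A\in\calB$ --- which the paper leaves implicit; the statement's ``$L_A,J_A\in\calL$'' is indeed to be read as membership in $\calB$ (equivalently, that $\HH_0$ of these complexes lies in $\calL$), exactly as you interpret it.
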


\begin{proof}
	Applying Lemma~\ref{LM:B-L-equiv} with $A=R$ shows that $S$ and $\frakm$
	are in the image of $\HH_0:\calB\to \calL$ (when $A=R$).
	Since   $L$ and $J$ are minimal $R$-projective resolutions of $S$ and $\frakm$,
	respectively, the lemma follows.
\end{proof}

\begin{lem}\label{LM:Phi-L-AS-linear}
	The isomorphism $\Phi^{(L)}:A\otimes\tilde{S}\to \Hom(L_A,D_eL_A)$
	of \ref{item:Phi-L}
	is an $(A_S^\op,A_S)$-bimodule
	isomorphism. See \ref{item:Hom-L-DL} for the module structure on 
	the range;	
	the module structure on $A\otimes\tilde{S}$
	is determined by $a^\op \cdot x\cdot b=a^\sigma xb$ ($a,b\in A$, $x\in A\otimes\tilde{S}$). 
	Under this isomorphism, 
	$\sigma\otimes \id_{\tilde{S}}$
	corresponds to $\vphi\mapsto D_e\vphi \circ \omega_{e,L_A}$.
\end{lem}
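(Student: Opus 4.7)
The plan is to adapt the proof of Lemma~\ref{LM:Phi-P-Q-dfn}(ii) line by line, with $L$ in place of $K$ and $\tilde S$ in place of $\tilde k(\frakp)$. There are three items to check: (a) that both bimodule structures in the statement are well-defined, i.e.\ factor through $A_S^{\op}\otimes_R A_S$; (b) that $\Phi^{(L)}$ is right $A_S$-linear; and (c) that under $\Phi^{(L)}$ the operator $\sigma\otimes \id_{\tilde S}$ corresponds to $\vphi\mapsto D_e\vphi\circ \omega_{e,L_A}$. Left $A_S^{\op}$-linearity will then follow formally from (b) and (c).

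For (a), on $A\otimes\tilde S$ the action factors through $A_S$ because $\frakp\subseteq R$ is central in $A$ and annihilates $\tilde S$ (which is an $S$-module); on $\Hom(L_A,D_eL_A)$ it factors through $A_S$ by the identification $\End(L_A)\cong A_S$ of~\ref{item:F-functor}. Step (b) is verbatim the right-linearity computation from the proof of Lemma~\ref{LM:Phi-P-Q-dfn}(ii): $t_L\circ(\ell_a\otimes u)\circ(\ell_b\otimes \id_L)=t_L\circ(\ell_{ab}\otimes u)$. Granting (b) and (c), the left $A_S^{\op}$-linearity follows from the identity $a^{\op}\cdot z=(z^\theta a)^\theta$ valid in any bimodule with a compatible involution, exactly as at the end of the proof of Lemma~\ref{LM:Phi-P-Q-dfn}(ii).

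The main obstacle is step (c). Mirroring the cited proof, the map $\vphi\mapsto D_e\vphi\circ \omega_{e,L_A}$ corresponds under $\Phi_{L,L}$ to $\sigma\otimes\iota_L$, where $\iota_L(u)=\Delta_e u\circ \zeta_{e,L}$, so the task reduces to showing $\iota_L=\id$ on $\Hom_{\bDer{R}}(L,\Delta_e L)$. In Lemma~\ref{LM:Phi-P-Q-dfn}(ii) this was done via a Koszul calculation, exploiting that the analogous $\Hom$-group is one-dimensional over the field $k(\frakp)$; here $\Hom_{\bDer{R}}(L,\Delta_e L)\cong\tilde S$ is typically larger, so a direct Koszul argument is not available. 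Instead, I would reduce to the Koszul case by localizing at $\frakp$: by Lemma~\ref{LM:derived-cat-base-change} together with~\ref{item:K-prime}, localization provides a natural map $\Hom_{\bDer{R}}(L,\Delta_e L)\to \Hom_{\bDer{R_\frakp}}(K',\Delta_e K')$ intertwining $\iota_L$ with $\iota_{K'}$ (since $\iota$ is defined from data stable under base change). Under the identifications $\beta^{(L)}$ and $\beta^{(K')}$ this map becomes $\tilde S\to\tilde k(\frakp)$, which is injective because $\tilde S$ is $S$-torsion-free. Since $\iota_{K'}=\id$ by the proof of Lemma~\ref{LM:Phi-P-Q-dfn}(ii), injectivity forces $\iota_L=\id$, completing the argument.
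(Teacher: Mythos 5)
Your proposal is correct and follows essentially the same route as the paper: the paper also observes that $A\otimes\tilde S$ and $\Hom(L_A,D_eL_A)$ are $S$-torsion-free, so the claimed identities may be checked after localizing at $\frakp$, where they reduce (via $(\Phi^{(L)})_\frakp=\Phi^{(K')}$ from~\ref{item:K-prime}) to Lemma~\ref{LM:Phi-P-Q-dfn}(ii) applied to $K'=L_\frakp$. Your only deviation is cosmetic --- you verify right $A_S$-linearity directly and localize only the statement $\iota_L=\id$, whereas the paper localizes the whole lemma at once --- but the key mechanism (torsion-freeness making localization injective, plus the Koszul computation already done for $K'$) is identical.
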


\begin{proof}
	Since $\tilde{S}$ is $S$-torsion-free  and $A$ is finite projective
	over $R$, the $S$-module $A\otimes\tilde{S}$ is $S$-torsion-free.
	As $\Phi^{(L)}$ is an $R$-module isomorphism,
	$\Hom(L_A,D_eL_A)$ is also $S$-torsion-free. It is therefore enough
	to prove the lemma after localizing at $\frakp$.
	The
	required statements now follow from Lemma~\ref{LM:Phi-P-Q-dfn}(ii)
	(applied to $K'$)
	and the second equation of \ref{item:K-prime}.
\end{proof}

\begin{lem}\label{LM:equiv-of-L-and-B}
	For $P\in\calB$, define a natural transformation
	$j_P: FD_eP\to (FP)^{\tilde{*}}$
	by
	\[(j_P \vphi)\psi=(\Phi^{(L)})^{-1}(D_e\vphi \circ \omega_{e,P}\circ \psi)\]
	for all $\vphi\in \Hom(L_A,D_eP)$ and $\psi\in \Hom(L_A,P)$.
	Then  $(F,j):(\calB,D_e,\omega_e)\to (\calL,\tilde{*},\tilde{\omega})$
		is an exact $1$-hermitian  equivalence.
\end{lem}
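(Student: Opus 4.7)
The plan is to assemble the statement from results already in hand. Lemma~\ref{LM:B-L-equiv} gives that $F=\Hom(L_A,-):\calB\to\calL$ is an exact equivalence, and Lemma~\ref{LM:duality-on-B} gives that $(D_e,\omega_e)$ really does restrict to an exact hermitian structure on $\calB$. What remains is to verify that $j=(j_P)_{P\in\calB}$ is well-defined, $A_S$-linear, natural, an isomorphism, and satisfies the hermitian-functor axiom. The engine throughout is Lemma~\ref{LM:Phi-L-AS-linear}, which provides the $(A_S^\op,A_S)$-bimodule isomorphism $\Phi^{(L)}$ and, crucially, the identification of $\sigma\otimes\id_{\tilde S}$ with $\vphi\mapsto D_e\vphi\circ\omega_{e,L_A}$.

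First I would check that $j_P\vphi:FP\to A\otimes\tilde S$ is a right $A_S$-module homomorphism, so lands in $(FP)^{\tilde*}$. For $\quo b\in A_S$ lifting to $b\in A$, precomposition of $\psi$ with $\ell_b\otimes\id_L$ corresponds to the right $A_S$-action on $\Hom(L_A,D_eL_A)$ described in \ref{item:Hom-L-DL}, and Lemma~\ref{LM:Phi-L-AS-linear} says $(\Phi^{(L)})^{-1}$ is right $A_S$-equivariant. Dually, for $\quo a\in A_S$ lifting to $a$, the equality
\[
D_e\bigl(\vphi\circ(\ell_a\otimes\id_L)\bigr)\circ\omega_{e,P}\circ\psi
=\quo a^\op\cdot\bigl(D_e\vphi\circ\omega_{e,P}\circ\psi\bigr)
\]
together with the left $A_S^\op$-equivariance of $(\Phi^{(L)})^{-1}$ and the formula $\quo a^\op\cdot x=\quo a^{\sigma_S}x$ on $A\otimes\tilde S$ show that $j_P(\vphi\cdot\quo a)=j_P(\vphi)\cdot\quo a$ with respect to the $\tilde*$-action on $(FP)^{\tilde*}$ from Example~\ref{EX:line-bundle}. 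Naturality in $P$ is a direct check from the naturality of $\omega_e$ and functoriality of $D_e$.

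To show each $j_P$ is an isomorphism, I would factor it as
\[
\Hom(L_A,D_eP)\xrightarrow{\;\alpha\;}\Hom(P,D_eL_A)\xrightarrow{\;F\;}
\Hom_{A_S}(FP,FD_eL_A)\xrightarrow{(\Phi^{(L)})^{-1}_\diamond}\Hom_{A_S}(FP,A\otimes\tilde S),
\]
where $\alpha(\vphi)=D_e\vphi\circ\omega_{e,P}$. The first map is bijective because $D_e$ is a duality on $\calB$ (Lemma~\ref{LM:duality-on-B}) and $\omega_{e,P}$ is an isomorphism; the second because $F:\calB\to\calL$ is fully faithful (Lemma~\ref{LM:B-L-equiv}) and $D_eL_A\in\calB$; the third because $\Phi^{(L)}$ is an isomorphism (\ref{item:Phi-L}). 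Unwinding shows the composition really is $j_P$.

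Finally, the hermitian-functor identity $j_{D_eP}\circ F\omega_{e,P}=j_P^{\tilde*}\circ\tilde\omega_{FP}$ is evaluated at a pair $(\vphi,\psi)$ with $\vphi\in FD_eP=\Hom(L_A,D_eP)$ and $\psi\in FP=\Hom(L_A,P)$ on both sides. After unfolding the definition of $\tilde*$ and $\tilde\omega$ from \ref{item:tilde-star}, both sides reduce to $(\Phi^{(L)})^{-1}$ applied to a composition that differs between the two sides only by replacing $D_e\vphi\circ\omega_{e,L_A}$ on the appropriate slot by its image under $\sigma\otimes\id_{\tilde S}$. This is exactly the content of the second assertion of Lemma~\ref{LM:Phi-L-AS-linear}. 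The sign $(-1)^{e(e+1)/2}$ in $\omega_e$ cancels symmetrically. I expect the main obstacle to be not any conceptual difficulty but the careful sign- and bimodule-bookkeeping in this last identity; no new input beyond Lemma~\ref{LM:Phi-L-AS-linear} and the fact that $D_e$ is a duality is needed.
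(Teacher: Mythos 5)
Your proposal is correct, and the bookkeeping parts (that $j_P\vphi$ is $A_S$-linear, naturality, and the identity $j_{D_eP}\circ F\omega_{e,P}=j_P^{\tilde{*}}\circ\tilde{\omega}_{FP}$) are handled exactly as the paper does, namely as routine consequences of Lemma~\ref{LM:Phi-L-AS-linear}. Where you genuinely diverge is the proof that each $j_P$ is an isomorphism. The paper chooses a free presentation $A_S^n\to A_S^m\to \HH_0(P)\to 0$ in $\calL$, lifts it through the equivalence $\HH_0:\calB\to\calL$ to a presentation $L_A^n\to L_A^m\to P\to 0$, checks directly that $j_{L_A}$ is an isomorphism, and concludes by the Five Lemma applied to the two left-exact sequences obtained from $FD_e(-)$ and $(F-)^{\tilde{*}}$; this mirrors the transfer philosophy of Section~\ref{sec:transfer} and isolates the special role of $L_A$ even though it is not a generator of $\calB$. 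You instead factor $j_P$ as the composite
\[
\Hom(L_A,D_eP)\xrightarrow{\ \vphi\mapsto D_e\vphi\circ\omega_{e,P}\ }\Hom(P,D_eL_A)\xrightarrow{\ F\ }\Hom_{A_S}(FP,FD_eL_A)\xrightarrow{\ (\Phi^{(L)})^{-1}_\diamond\ }(FP)^{\tilde{*}},
\]
where the first map is the standard duality bijection for $(D_e,\omega_e)$ on $\calB$ (Lemma~\ref{LM:duality-on-B}), the second is bijective because $F$ is fully faithful on the full subcategory $\calB$ (Lemma~\ref{LM:B-L-equiv}) and $D_eL_A\in\calB$, and the third is post-composition with the bimodule isomorphism of Lemma~\ref{LM:Phi-L-AS-linear}; unwinding indeed recovers $j_P$. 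This is a valid and somewhat more direct argument: it avoids the presentation and the Five Lemma altogether and does not even use exactness of $D_e$ or $\tilde{*}$ at this step, only that $(D_e,\omega_e)$ is a duality on $\calB$ and that $F$ is an equivalence. The paper's route, by contrast, keeps the computation concentrated in the single verification that $j_{L_A}$ is an isomorphism, which is essentially the statement that $\Phi^{(L)}$ identifies $\Hom(L_A,D_eL_A)$ with $A\otimes\tilde{S}$. Either way, no gap: your argument goes through with the lemmas already established.
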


Once suppressing $\Phi^{(L)}$,
the hermitian functor $(F,j)$
is defined using the same formula as the  
$L_A$-transfer functor of Section~\ref{sec:transfer},
so we shall (abusively) refer to it as the $L_A$-transfer functor.
Note, however, that it is not true  in general that every
object of $\calB$ is a summand of a direct sum of copies of $L_A$.

\begin{proof}
	By Lemma~\ref{LM:B-L-equiv}, $F:\calB\to \calL$ is an exact
	equivalence.
	That $j$ is natural and satisfies 
	$j_{D_eP}\circ F\omega_{e,P}= j_P^{\tilde{*}}\circ \tilde{\omega}_{FP}  $
	for all $P\in \calB$ is routine, thanks to
	Lemma~\ref{LM:Phi-L-AS-linear}. 
	It remains to check that $j_P$ is an isomorphism
	for all $P\in \calB$.

	We can find $n,m\in \N$
	and an exact sequence $A_S^n\to A_S^m\to \HH_0(P)\to 0$ in $\calL$.
	Since $\HH_0:\calB\to \calL$ is an exact equivalence
	(Lemma~\ref{LM:B-L-equiv}),
	this gives rise to an exact
	sequence $L_A^n\to L_A^m\to P\to 0$ in $\calB$,
	which in turn gives
	rise to exact sequences
	$0\to FD_e P\to FD_eL_A^m\to FD_eL_A^n$
	and $0\to (FP)^{\tilde{*}}\to (FL_A^m)^{\tilde{*}}\to
	(FL_A^n)^{\tilde{*}}$.
	The natural transformation
	$j$ determines a morphism between
	these   sequences. It is routine to check
	that $j_{L_A}$ is an isomorphism, and hence so
	are $j_{L_A^m}$ and $j_{L_A^n}$.
	By the Five Lemma, $j_P$ is an isomorphism as well.
\end{proof}

\begin{lem}\label{LM:A-tilde-S-diag}
The isomorphisms $\Phi^{(L)}$ of \ref{item:Phi-L},
the isomorphism
$\Phi^{(K )}$ of \eqref{EQ:Phi-P}
(with $K$ as in \ref{item:K})
and the exact sequence \eqref{EQ:fundamental-exact-seq}
fit into a commutative diagram  of
$(A^\op,A)$-bimodules
\begin{equation} 
\xymatrix{
A\otimes \tilde{S} \ar@{^{(}->}[r]   \ar[d]^{\Phi^{(L)}}  &
A\otimes \tilde{\frakm}^{-1}  \ar[r]^{\calT } \ar[d]^{\Phi^{(J)}}  & 
{A}\otimes \tilde{k}(\frakq) \ar[d]^{\Phi^{(K)}} \\
\Hom (L_A,D_eL_A)\ar[r]^{(D_e \iota)_\diamond} &
\Hom (L_A,D_e J_A) \ar[r]^-{t}  &
\Hom (K_A,D_{e+1}K_A) 
}
\end{equation}
in which $\Phi^{(J)}$ is an isomorphism,
and $t$ is the composition
\[
 t:\quad \Hom (L_A,D_eJ)\xrightarrow{(D_{e+1}q)_\diamond} \Hom (L_A,D_{e+1}K_A)
  \xrightarrow{(p^\diamond)^{-1}}  \Hom  (K_A, D_{e+1}K_A)  
\]
(the right arrow is invertible by Lemma~\ref{LM:F-H0-equiv-lem}).
\end{lem}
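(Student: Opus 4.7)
The plan is to define $\Phi^{(J)}$ by the same recipe that produces $\Phi^{(L)}$ and $\Phi^{(K)}$, and then deduce commutativity of both squares from naturality of $\Phi_{P,Q}$ (Lemma~\ref{LM:Phi-P-Q-dfn}) in its two arguments, combined with an identification of the $R$-side of $\calT$ as a connecting morphism in homology.

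For the construction of $\Phi^{(J)}$: since $\tilde{\frakm}^{-1} \cong \HH_0(\Delta_e J)$ is annihilated by $\frakp$ and $\HH_i(\Delta_e J) = 0$ for $i > 0$ (because $\frakm$ has projective dimension $e$ over $R$), I would define $\beta^{(J)}:\Hom_{\bDer{R}}(L,\Delta_e J)\to \tilde{\frakm}^{-1}$ by the evident analog of \eqref{EQ:k-frakp-iso}, and set $\Phi^{(J)}:=\Phi_{L,J}\circ(\id_A\otimes\beta^{(J)})^{-1}$. Both factors are isomorphisms: Lemma~\ref{LM:Phi-P-Q-dfn}(i) applies verbatim to $\bDer{R}$ in place of $\bDer{R_\frakp}$, since its proof only uses the flatness of $A$ over the base (via Lemma~\ref{LM:derived-cat-base-change}).

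For the left square: naturality of $\Phi_{L,-}$ in the second argument, applied to $\iota_0:J\to L$ in $\bDer{R}$, yields $(D_e\iota)_\diamond\circ\Phi_{L,L}=\Phi_{L,J}\circ(\id_A\otimes(\Delta_e\iota_0)_\diamond)$. Composing with the $\beta$-identifications, the claim reduces to checking that $\HH_0(\Delta_e\iota_0):\tilde{S}\to \tilde{\frakm}^{-1}$ coincides with the natural inclusion, which is immediate from the functoriality of $\Ext^e_R(-,R)$ applied to $\frakm\hookrightarrow S$.

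The right square is the main difficulty. Applying the contravariant triangulated functor $\Delta_{e+1}$ to the distinguished triangle $J\to L\to K\to TJ$ and extracting the long exact sequence in homology, I would recover the short exact sequence $\tilde{S}\hookrightarrow \tilde{\frakm}^{-1}\twoheadrightarrow \tilde{k}(\frakq)$ underlying \eqref{EQ:fundamental-exact-seq}---with the surjection $\tilde{\frakm}^{-1}\twoheadrightarrow \tilde{k}(\frakq)$ identified with $\HH_0(\Delta_{e+1}q_0)$ via the canonical $\Delta_{e+1}T\cong \Delta_e$. Naturality of $\Phi_{-,-}$ applied to $\Delta_{e+1}q_0$ (in the second variable) and to $p_0$ (in the first variable), combined with the $\beta$-identifications and Lemma~\ref{LM:F-H0-equiv-lem} (which supplies the inverse of $p^\diamond$), yields $t\circ \Phi^{(J)}=\Phi^{(K)}\circ\calT$ at the level of $A\otimes \tilde{\frakm}^{-1}$. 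The hard part is tracking shifts and signs through the triangulated duality---in particular, the canonical isomorphism $\Delta_{e+1}T\cong \Delta_e$ may introduce a sign that must precisely cancel with signs coming from $D_{e+1}$ in the definition of $t$, so as to produce equality rather than equality up to sign.
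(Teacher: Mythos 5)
Your proposal is essentially the paper's own proof: $\Phi^{(J)}$ is defined exactly as you suggest (via $\Hom_{\bDer{R}}(L,\Delta_e J)$ and $\Phi_{L,J}$), the left square follows from naturality of $\Phi_{-,-}$ together with the identification of $\HH_0(\Delta_e\iota_0)$ with $\Ext^e_R(-,R)$ applied to $\frakm\embeds S$, and the right square from naturality in both variables combined with Lemma~\ref{LM:F-H0-equiv-lem} (whose second part supplies $(p^\diamond)^{-1}$ and the compatibility with the $\HH_0$-identifications) and the identification of $\calT$ with the map induced by the third side of the triangle \ref{item:J-L-K-A-triangle}. Two corrections to your write-up. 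First, your justification for $\HH_i(\Delta_e J)=0$ for $i>0$ is not the right one: these groups are $\Ext^{e-i}_R(\frakm,R)$, and projective dimension $\leq e$ only gives the vanishing of $\Ext^{>e}_R(\frakm,R)$, i.e.\ of the homology in negative degrees (which vanishes anyway since $J$ is supported in degrees $0,\dots,e$); the vanishing of $\Ext^{<e}_R(\frakm,R)$ is the Cohen--Macaulay/grade statement already used in Lemma~\ref{LM:duality-on-B}, namely that $\frakm$, being a nonzero $S$-torsion-free $S$-module, is Cohen--Macaulay of dimension $1$ over the $(e+1)$-dimensional regular local ring $R$ \cite[Propositions~1.5, 1.7]{Bayer_2019_Gersten_Witt_complex_prerprint}. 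Second, the sign you flag as the hard part does not arise: with the paper's explicit conventions for $T$ and $*$ one checks on components that $*\circ T^{-1}=T\circ *$ holds on the nose, so $\Delta_{e+1}=\Delta_e\circ T^{-1}$, $D_{e+1}=D_e\circ T^{-1}$ and $\Delta_{e+1}q_0=\Delta_e(T^{-1}q_0)$ literally; the paper exploits this by writing the third map as $T^{-1}q_0:T^{-1}K\to J$, staying in the $\Delta_e$/$D_e$ world throughout, and comparing with $\beta^{(K)}$ and $\Phi^{(K)}$ only through $\HH_0$ and the groups $\Hom(K_A,D_{e+1}K_A)$, where a negation of differentials is invisible, so the diagram commutes exactly and not merely up to sign.
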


\begin{proof}
	Recall that $\tilde{S}=\Ext^e_R(S,R)$, $\tilde{\frakm}^{-1}=\Ext^e_R(\frakm,R)$
	and $\tilde{k}(\frakq)=\Ext^{e+1}(k(\frakq),R)$.
	As noted in \ref{subsec:homological}, the  exact sequence 
	\[\tilde{S}\to \tilde{\frakm}^{-1}\to \tilde{k}(\frakq)\]
	is   isomorphic to
	\[
	\HH_0(\Delta_eL)\xrightarrow{\HH_0(\Delta_e \iota_0)}
	\HH_0(\Delta_e J) \xrightarrow{\HH_0(\Delta_{e}T^{-1}q_0)}
	\HH_0(\Delta_e T^{-1} K)=\HH_0(\Delta_{e+1}K) 
	\]
	(see \ref{item:J-L-K-A-triangle}).
	Applying Lemma~\ref{LM:F-H0-equiv-lem} with $A=R$,
	we get a get a commutative diagram
	\[
	\xymatrix{
	\HH_0(\Delta_eL) \ar[d] \ar[r]^{\HH_0(\Delta_e \iota_0)} &
	\HH_0(\Delta_e J) \ar[d] \ar[r]^{\HH_0(\Delta_{e}T^{-1}q_0)} &
	\HH_0(\Delta_{e+1}K) \ar[d] \ar[dr] & \\
	\Hom(L,\Delta_eL) \ar[r]^{(\Delta_e \iota)_\diamond } &
	\Hom(L,\Delta_eJ) \ar[r]^{(\Delta_e T^{-1}q_0)_\diamond} &
	\Hom(L,\Delta_eT^{-1} K)  &
	\Hom(K,\Delta_eT^{-1}K) \ar[l]_{p_0^\diamond}
	} 
	\]
	in which the vertical and diagonal arrows are the natural isomorphisms
	provided by the lemma  and   the $\Hom$-groups were taken in $\bDer{R}$.

	By Lemma~\ref{LM:Phi-P-Q-dfn}(i)
	(applied with $\frakq$ in place of $\frakp$ and 
	$(P,Q)\in\{(L,L),(L,J)$, $(L,T^{-1}K)$, $( K,T^{-1}K)\}$),
	the tensor of the bottom row of the last diagram 
	with $A$ is isomorphic via $\Phi_{-,-}$ to 
	\begin{align*}
	\Hom(L_A,D_eL_A)\xrightarrow{(D_e\iota)_\diamond}
	\Hom(L_A,&D_eJ_A)\xrightarrow{(D_eT^{-1}q)_\diamond} \\
	&\Hom(L_A,D_eT^{-1}K_A) \xleftarrow{p^\diamond}
	\Hom(K_A,D_eT^{-1}K_A).
	\end{align*}
	Putting everything together, we arrive at the following commutative diagram,
	in which the vertical maps are isomorphism.
	\[
	\xymatrixcolsep{4pc}\xymatrix{
A\otimes \tilde{S} \ar[r]   \ar[d] &
A\otimes \tilde{\frakm}^{-1}  \ar[r]  \ar[d]   & 
{A}\otimes \tilde{k}(\frakq) \ar[d]  \\
\Hom (L_A,D_eL_A)\ar[r]^{(D_e \iota)_\diamond} &
\Hom (L_A,D_e J_A) \ar[r]^-{(p^{\diamond})^{-1}  (D_eT^{-1}q)_\diamond}  &
\Hom (K_A,D_{e+1}K_A) 
}
	\]
	Comparing the construction of the left-most, resp.\ right-most, vertical arrow 
	with that of $\Phi^{(L)}$, resp.\  $\Phi^{(K)}$,
	shows that they coincide, so we have obtained the desired commutative diagram. 
\end{proof}

\begin{lem}\label{LM:faithfulness}
	The functor $P\mapsto P_\frakp:\calB\to \calC^0(A_\frakp)$ (notation
	as in Section~\ref{sec:GW-propositions})
	is faithful. 
\end{lem}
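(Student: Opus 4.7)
The plan is to use the equivalence $\HH_0 : \calB \to \calL$ of Lemma~\ref{LM:B-L-equiv} to transport the faithfulness question to one about morphisms of $A_S$-modules in $\calL$, and then to exploit the $S$-torsion-freeness guaranteed by membership in $\calL$.

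First I would observe that localization at $\frakp$ commutes with taking $\HH_0$, so for $P \in \calB$ the object $P_\frakp \in \calC^0(A_\frakp)$ corresponds, under the equivalence $\HH_0 : \calC^0(A_\frakp) \to \rproj{A(\frakp)}$ recorded before Proposition~\ref{PR:GW-reduction-II}, to $\HH_0(P) \otimes_S k(\frakp)$. Since $\HH_0 : \calB \to \calL$ is an equivalence, a morphism $h : P \to P'$ in $\calB$ vanishes if and only if $\HH_0(h)$ vanishes; likewise $h_\frakp$ vanishes in $\calC^0(A_\frakp)$ if and only if $\HH_0(h) \otimes \id_{k(\frakp)}$ vanishes in $\rproj{A(\frakp)}$. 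The lemma therefore reduces to showing that the base-change functor
\[
M \longmapsto M \otimes_S k(\frakp) : \calL \longrightarrow \rproj{A(\frakp)}
\]
is faithful.

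For this final step, suppose $h_0 : M \to M'$ is an $A_S$-morphism between objects of $\calL$ with $h_0 \otimes \id_{k(\frakp)} = 0$. Because $M' \in \calL$ is $S$-torsion-free and $k(\frakp)$ is the fraction field of the one-dimensional domain $S = R/\frakp$, the natural localization map $\iota : M' \to M' \otimes_S k(\frakp)$ is injective. Commutativity of the naturality square for localization shows that the composition $M \xrightarrow{h_0} M' \xrightarrow{\iota} M' \otimes_S k(\frakp)$ agrees with $M \to M \otimes_S k(\frakp) \xrightarrow{0} M' \otimes_S k(\frakp)$ and is therefore zero; injectivity of $\iota$ then forces $h_0 = 0$. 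There is no real obstacle here: once the reduction to $\calL$ is in place, the argument is immediate, and the only substantive input is that objects of $\calL$ have no $S$-torsion, so that morphisms into them are detected by their localizations at $\frakp$.
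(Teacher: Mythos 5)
Your proof is correct and follows essentially the same route as the paper: both transport the question through the commuting square formed by the two $\HH_0$-equivalences ($\calB\to\calL$ and $\calC^0(A_\frakp)\to\rproj{A(\frakp)}$) and the localization functor, and both conclude by noting that localization at $\frakp$ is faithful on $\calL$ because its objects are $S$-torsion-free. You merely spell out in more detail the one-line torsion-freeness argument that the paper leaves implicit.
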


\begin{proof}
	Note first that   any $P\in \calB$ is supported
	in degrees $0,\dots,e$ and satisfies $\HH_i(P)=0$
	for $i\neq 0$ and $\HH_0(P)\in \rmod{A_S}$.
	Since $R_\frakp$ is flat over $R$, we have $\HH_i(P_\frakp)=\HH_i(P)_\frakp$
	and thus $P_\frakp\in \calC^0(A_\frakp)$.
	
	There is a   diagram of functors
	\[
	\xymatrixcolsep{4pc}\xymatrix{
	\calB \ar[d]^{\HH_0} \ar[r]^{P\mapsto P_\frakp} & 
	\calC^0(A_\frakp) 	\ar[d]^{\HH_0} \\
	\calL \ar[r]^{M\mapsto M_\frakp} & \rmod{A(\frakp)}
	}
	\]
	which commutes up to natural isomorphism. By Lemma~\ref{LM:B-L-equiv}
	and the discussion in Section~\ref{sec:GW-propositions}, both vertical
	arrows are equivalences, and the bottom arrow is faithful because
	$\calL$ consists of $S$-torsion-free $S$-modules. Thus, the top arrow
	is also a faithful functor.	
\end{proof}

We are now ready to establish \eqref{EQ:what-to-prove}
in the case $\frakp\subseteq\frakq$. This will complete the proof of Theorem~\ref{TH:GW-well-defined}.
Recall our assumption that $R$ is local and $\frakq$ is its maximal ideal.

\begin{proof}[Proof \eqref{EQ:what-to-prove} when $\frakp\subseteq \frakq$.]

Recall that we are given $(V,f)\in \Herm[\veps]{A(\frakp),\sigma(\frakp);\tilde{k}(\frakp)}$, 
and we wish to prove
that
\[(s'_{e+1}\dd_e s'^{-1}_e[V,f])_{\frakq}=-\partial_{\frakp,\frakq}[V,f].\]
The proof is divided into three steps.
In the first step, we  construct an object
of $\Herm[\veps]{\bDer[e]{A}/\bDer[e+1]{A},D_e, \omega_e}$ 
representing $s'^{-1}_e[V,f]$.
In the second step, we use that object to describe
$\partial_{\frakp,\frakq}[V,f]$,
and in the third step we evaluate $s'_{e+1}\dd_e s'^{-1}_e[V,f]$
and prove the desired equality.

As in Section~\ref{sec:GW-Balmer}, we abbreviate $\bDer[e]{A}$ to $\calD_e$.
Recall that the shifted triangulated hermitian structures
of  $\calD':=\bDer{ A_\frakp }$  
are denoted $(D'_n,\delta'_n,\omega'_n)_{n\in\Z}$.

\medskip

\Step{1} 
Choose an $A $-lattice $U$ in $V$
such that $ U^f \frakm\subseteq U\subseteq U^f$;
see Section~\ref{sec:GW-second-res}.
We write $f_0$ for the map
$U\to U^{\tilde{*}}=\Hom_{A_S}(U,A\otimes \tilde{S} )$
given by $(f  u)v=\hat{f}(u,v)$. Then, up to natural identifications, 
$(f_0)_\frakp=f$.

\medskip

\Step{1.1}
By Lemma~\ref{LM:equiv-of-L-and-B}, 
the $L_A$-transfer functor,
$(F,j):(\calB,D_e,\omega_e)\to (\calL,\tilde{*},\tilde{\omega})$, 
is
a $1$-hermitian equivalence.
We may therefore assume that $(U,f_0)$ is given as the $L_A$-transfer of a 
some (non-unimodular)
$\veps$-hermitian space $(Y,g )$ over $(\calB,D_e,\omega_e)$.

By the definition of $\calB$,
we have $(Y_\frakp,g_\frakp)\in \Herm[\veps]{\calC^0(A_\frakp),D'_e,\omega'_e}$.
We claim that $\tilde{F}^{(K')}(Y_\frakp,g_\frakp)$ (notation as in \eqref{EQ:dfn-of-Fp}, \ref{item:K-prime})
is isomorphic to $(V,f)$.
Indeed, by Lemma~\ref{LM:derived-cat-base-change} and \ref{item:K-prime},
\[\Hom_{\calD'}(K'_A,Y_\frakp)=\Hom_{\calD'}((L_A)_\frakp,Y_\frakp)\cong \Hom(L_A,Y)_\frakp=U_\frakp=V.\]
Noting that $\Phi^{(K')}=(\Phi^{(L)})_\frakp$ (up to natural identifications),
it is routine to check that under the above isomorphism, $\tilde{F}^{(K')}(Y_\frakp,g_\frakp)=(V,f)$.

\medskip

\Step{1.2}
Let $Z$   be the cone of $g:Y\to D_eY$ in $\calD$. We claim that $Z\in \calC^0(A_\frakq)$,
i.e.,
$Z$ is supported in degrees $0,\dots,e+1$, $\HH_i(Z)=0$ for $i\neq 0$, and $\HH_0(Z)\in \rproj{A(\frakq)}$.

To see this, observe that we have a commutative diagram whose horizontal maps are isomorphisms:
\begin{equation}\label{EQ:FY-U-identification}
\xymatrix{
\HH_0(Y) \ar[r]^{\sim} \ar[d]^{\HH_0(g)} &
FY \ar@{=}[r] \ar[d]^{Fg} &
U \ar@{=}[r] \ar[d]^{f_0} &
U \ar@{^{(}->}[d] \\
\HH_0(D_eY) \ar[r]^{\sim} &
FD_eY \ar[r]^{j_Y} &
U^{\tilde{*}} \ar[r]^{f^{-1}} &
U^f
}
\end{equation}
Here, the horizontal maps of the left square are induced by the natural isomorphism $\HH_0\to F$
of Lemma~\ref{LM:F-H0-equiv-lem}, the middle square commutes because
$(U,f)$ is the $L_A$-transfer of $(Y,g)$ (Step~1.1, Lemma~\ref{LM:equiv-of-L-and-B}),
and the third square commutes because $f_0=f|_U$.
By our choice of $U$, the map $U\to U^f$ is injective and its cokernel is an $A(\frakq)$-module.
Thus, the same applies to $\HH_0(Y)\to \HH_0(D_eY)$. Now, by inspecting the long homology
exact sequence associated to the exact triangle $Y\to D_eY\to Z\to TY$, we see that
$\HH_i(Z)=0$ for all $i\neq 0$ and $\HH_0(Z)\cong \coker(U\to U^f)\in \rproj{A(\frakq)}$.
Since $Y$ and $D_eY$ are supported in  degrees $0,\dots,e$, the complex
$Z$ is supported in degrees $0,\dots,e+1$, so $Z\in \calC^0(A_\frakq)$.

\medskip

\Step{1.3}
We next claim that $(Y,g)$ represents a class in $W_\veps^e(\calD_e/\calD_{e+1})$
which maps onto $[Y_\frakp,g_\frakp]$ in $\bigoplus_{\frakt\in R^{(e)}} W^e_\veps(\bDer[e]{A_\frakt})$
(see Proposition~\ref{PR:derived-localization}).

Since $(Y,g)$ clearly maps to $(Y_\frakp,g_\frakp)$ in 
$\Herm[\veps]{\bDer[e]{A_\frakp},D'_e,\omega'_e}$, proving the claim amounts to showing that
\begin{enumerate}[label=(\roman*)]
	\item $g:Y\to D_eY$ is an isomorphism in  $\calD_e/\calD_{e+1}$,
and
	\item for all  $\frakt\in R^{(e)}-\{\frakp\}$, the class $[Y_\frakt,g_\frakt]$ is trivial
in $W_\veps^e(\bDer[e]{A_\frakt})$.
\end{enumerate}

By the definition of $\calD_e/\calD_{e+1}$ (see \cite[\S2]{Balmer_2002_Gersten_Witt_complex},
for instance), the morphism $g:Y\to D_eY$
in $\calD_e$ represents an isomorphism in $\calD_e/\calD_{e+1}$  if
its cone lives in $\calD_{e+1}$. The latter follows from  Step~1.2, so (i) holds.

To prove (ii), let $\frakt\in R^{(e)}-\{\frakp\}$. For all $i\in \Z-\{0\}$, we have $\HH_i(Y_\frakt)\cong
\HH_i(Y)_\frakt=0$. Since $\frakp- \frakt\neq \emptyset$
(otherwise $\frakp\subseteq\frakt$) 
and $\HH_0(Y)\cdot \frakp\cong U\cdot \frakp =0$,
we also have $\HH_0(Y_\frakt)\cong \HH_0(Y)_\frakt =0$. It follows that $Y_\frakt$
is isomorphic to the zero object in $\bDer[e]{A_\frakt}$, and thus
$[Y_\frakt,g_\frakt]=0$ in  $W_\veps^e(\bDer[e]{A_\frakt})$.

\medskip

{\noindent\it Conclusion of Step~1.} By putting Steps~1.1 and 1.3
together, we get   $s'_e[Y,g]=[V,f_\frakp]$ (see Construction~\ref{CN:GW-isomorphism}).

\medskip

\Step{2}
Since $U$ was chosen such that $U^f\frakm \subseteq U\subseteq U^f$,
we have $f(U^f,U^f)\subseteq A\otimes \tilde{\frakm}^{-1}$ (see Section~\ref{sec:GW-second-res}).
Write   
$\hat{f}_1=f|_{U^f\times U^f}:U^f\times U^f\to A\otimes\tilde{\frakm}^{-1}$.
Then   $\what{\partial f}:U^f/U\times U^f/U\to A\otimes\tilde{k}(\frakq)$
is given by
\[
\what{\partial f} (x+U ,y+U )=\calT(\hat{f}_1(x,y))
\qquad (x,y\in U^f).
\]
We now transform the presentation of the 
pairing $(U^f/U,\what{\partial f})$ into one defined by means of $(Y,g)$.
We shall freely view $U^{\tilde{*}}=\Hom_{A_S}(U,A\otimes\tilde{S})$ as an $A$-submodule
of $\Hom_{A(\frakp)}(V,A\otimes \tilde{k}(\frakp))$ via the localization-at-$\frakp$ map.

\medskip

\Step{2.1}
The diagram \eqref{EQ:FY-U-identification} gives rise to an isomorphism   $f^{-1}\circ j_Y:FD_eY\to U^f$
(here, $f^{-1}$ is restricted to a map from $U^{\tilde{*}}$ to $U^f$).
We claim that under this isomorphism,
the form $\hat{f}_1$ corresponds to the form $\hat{f}_2$
given by
\begin{align*}
\hat{f}_2(\vphi,\psi) 
=(\Phi_{\frakp}^{(L)})^{-1}(D'_e\vphi_\frakp\circ \veps D'_e g_\frakp^{-1}\circ \psi_\frakp)
\end{align*}
for all $\vphi,\psi\in FD_eY=\Hom(L_A,D_eY)$.
Indeed, 
\begin{align*}
\hat{f}_1(f^{-1}(j_Y \vphi) ,f^{-1} (j_Y \psi) )
&=\big(f(f^{-1}(j_Y \vphi))\big)(f^{-1} (j_Y \psi)) 
=(j_{Y_\frakp} \vphi_\frakp)(f^{-1}(j_{Y_\frakp}\psi_\frakp)) \\
&=(\Phi_{\frakp}^{(L)})^{-1}(D'_e\vphi_\frakp\circ \omega'_{e,Y_\frakp}\circ (f^{-1}(j_{Y_\frakp}\psi_\frakp)))\\
&=(\Phi_{\frakp}^{(L)})^{-1}(D'_e\vphi_\frakp\circ \omega'_{e,Y_\frakp}\circ g_\frakp^{-1}\circ \psi_\frakp)
=\hat{f}_2(\vphi,\psi).
\end{align*}
In the fourth equality we used
the fact that $f^{-1}\circ j_{Y_\frakp}=(F g)_\frakp^{-1}$,
see \eqref{EQ:FY-U-identification}.

\medskip

\Step{2.2} 
Since $U^f\frakm \subseteq U$,
every right $A$-module homomorphism
$A_S\to U^f$
restricts to a homomorphism $ A \otimes \frakm\to U$.
Recalling that $\HH_0:\calA\to \rmod{A_S}$
is an equivalence  taking 
$L_A$ to $A_S$,
$J_A$ to $A\otimes \frakm$,
and the morphism $g:Y\to D_eY$ 
to a morphism isomorphic to $U\embeds U^f$
--- see   \eqref{EQ:FY-U-identification} ---,
this means
that
for every $\vphi\in \Hom(L_A,D_eY)$,
there exists a unique $\psi\in \Hom(J_A,Y)$
such that $g\circ \psi=\vphi\circ \iota $ (with $\iota:J_A\to L_A$ 
defined in \ref{item:J-L-K-A-triangle}).
Write $\res \vphi:=\psi$. Then
\begin{align}\label{EQ:res-dfn}
g\circ \res\vphi=\vphi\circ \iota   
\end{align}
for all $\vphi\in FD_eY$.

Let $\Phi^{(J)}$ be as in Lemma~\ref{LM:A-tilde-S-diag}.
We claim that for all $\vphi,\psi\in FD_eY$, 
\[
\hat{f}_2(\vphi,\psi)=(\Phi^{(J)})^{-1}(\veps D_e(\res \vphi)\circ \psi).
\]
Since $U^f$ and $A\otimes\tilde{\frakm}^{-1}$
are  $S$-torsion-free, it is enough to check this after localizing at $\frakp$.
We check this using  Lemma~\ref{LM:A-tilde-S-diag}:
\begin{align*}
\hat{f}_2(\vphi,\psi)
&=(\Phi^{(L)}_{\frakp})^{-1}(D'_e\vphi_\frakp\circ \veps D'_e g_\frakp^{-1}\circ \psi_\frakp)\\
&=(\Phi^{(J)}_\frakp)^{-1}(D'_e\iota_\frakp \circ D'_e\vphi_\frakp\circ \veps D'_e g_\frakp^{-1}\circ \psi_\frakp)\\
&=(\Phi^{(J)}_\frakp)^{-1}(D'_e(g_\frakp \circ (\res \vphi)_\frakp  )\circ \veps D'_e g_\frakp^{-1}\circ \psi_\frakp)\\
&=(\Phi^{(J)}_\frakp)^{-1}(D_e(\res \vphi)_\frakp\circ D'_e g_\frakp \circ \veps D'_e g_\frakp^{-1}\circ \psi_\frakp)
= 
(\Phi^{(J)}_\frakp)^{-1}(\veps D_e(\res \vphi)\circ \psi)_\frakp.
\end{align*}

\medskip

\Step{2.3} 
Recall from Step~1.2 that $Z$ denotes the cone of $g:Y\to D_eY$. In particular, we have
a distinguished triangle
\begin{equation}\label{EQ:exact-Y-Z-triangle}
Y\xrightarrow{g} D_eY\xrightarrow{u} Z\xrightarrow{v} TY.
\end{equation}
in $\calD$. 
We observed in Step~1.2 that $Z\in \calC^0(A_\frakq)$
and the sequence $0\to\HH_0(Y)\to\HH_0(D_eY)\to \HH_0(Z)\to 0$
is exact. 
By Lemma~\ref{LM:F-H0-equiv-lem}, this means that $0\to FY\to FD_eY\to FZ\to 0$
is an exact sequence of right $A$-modules 
and   $p^\diamond:\Hom(K_A,Z)\to \Hom(L_A,Z)=FZ$
is an isomorphism.
Thus, we have an exact sequence
\begin{equation}\label{EQ:KAZ-exact-seq}
0\to FY\xrightarrow{Fg} FD_eY\xrightarrow{(p^{\diamond})^{-1}\circ Fu} \Hom(K_A,Z)\to 0.
\end{equation}
The diagram \eqref{EQ:FY-U-identification}
specifies
an isomorphism between the $A$-module homomorphism  $U\embeds U^f$
and $Fg:FY\to FD_eY$, and so we get an   
isomorphism of $A(\frakq)$-modules
\[\Hom(K_A,Z)\to U^f/U .\]
Unfolding the definitions, this isomorphism is evaluated
as follows. Given $\alpha\in \Hom(K_A,Z)$, 
the exactness of \eqref{EQ:KAZ-exact-seq} implies that there exists
$\vphi\in FD_eY=\Hom(L_A,D_eY)$ such that $\alpha=(p^\diamond)^{-1}((Fu)\vphi)$,
or rather, $u\circ\vphi = \alpha\circ p$ (note that $Fu=u_\diamond$).
Since the isomorphism $FD_eY\to U^f$
is $f^{-1}\circ j_Y$, the 
element of $U^f/U$ corresponding to $\alpha$ is the image of $ f^{-1}( j_Y\vphi)\in U^f$
in $U^f/U$.

\medskip

\Step{2.4} The pairing $\what{\partial f}:U^f/U\times U^f/U\to A\otimes \tilde{k}(\frakq)$
induces an $A\otimes \tilde{k}(\frakq)$-valued pairing on $\Hom(K_A,Z) $ via
the isomorphism $\Hom(K_A,Z)\to U^f/U$ of Step~2.3. Denote this pairing as  $\what{\partial f}_2$ 
and  
the corresponding $\veps$-hermitian form by $\partial f_2$.
We finish Step~2 by giving a way to evaluate 
\[
\what{\partial f}_2:\Hom(K_A,Z)\times \Hom(K_A,Z)\to A\otimes \tilde{k}(\frakq),
\]
thus describing $\partial_{\frakp,\frakq}[V,f]$ in terms of $(Y,g)$ and the
distinguished triangle \eqref{EQ:exact-Y-Z-triangle}.

Let $\alpha,\beta\in \Hom(K_A,Z)$. We showed in Step~2.3 that there are
$\vphi,\psi\in FD_eY$ such that 
\begin{align}\label{EQ:alpha-psi-rel}
&u\circ \vphi =  
\alpha\circ p,\\
&u\circ \psi =   
\beta \circ p, \nonumber
\end{align}
and the images of $\alpha$ and $\beta$ in $U^f/U$
are represented by $ f^{-1}( j_Y\vphi)$ and $ f^{-1}( j_Y\psi)$,
respectively. Thus,
\[\what{\partial f}_2(\alpha,\beta)=\what{\partial f}(f^{-1}( j_Y\vphi) ,
f^{-1}( j_Y\psi) ) =
\calT (\hat{f}_1(f^{-1}( j_Y\vphi) ,f^{-1}( j_Y\psi) )).\]
By Steps~2.1 and~2.2, the right hand side evaluates to 
$\calT((\Phi^{(J)})^{-1}(\veps D_e(\res \vphi)\circ \psi))$, so by Lemma~\ref{LM:A-tilde-S-diag},
we get
\begin{align*}
\what{\partial f}_2(\alpha,\beta)&=\calT((\Phi^{(J)})^{-1}(\veps D_e(\res \vphi)\circ \psi))\\
&=
\veps((\Phi^{(K)})^{-1}\circ (p^\diamond)^{-1}\circ (D_{e+1}q)_\diamond)(D_e(\res\vphi)\circ \psi)\\
&=\veps (\Phi^{(K)})^{-1}  (p^\diamond)^{-1}  (D_{e+1}q\circ D_e(\res\vphi)\circ \psi).
\end{align*}

\medskip

\Step{3}
We finally describe $s'_{e+1}\dd _es'^{-1}_e[V,f_\frakp]=s'_{e+1}\dd_e[Y,g]$
and check that its negative agrees with $\partial_{\frakp,\frakq}[V,f_\frakp]$,
which is represented by $(\Hom(K_A,Z), {\partial f}_2)$
defined Step~2.4.

\medskip

\Step{3.1} 
Recall the distinguished triangle \eqref{EQ:exact-Y-Z-triangle}.
By 
\cite[Theorem~2.6]{Balmer_2000_Triangular_Witt_I},
we have a commutative diagram  
\begin{equation}\label{EQ:map-between-triangles}
\xymatrix{
Y \ar[r]^{g } \ar[d]^{\delta_e\veps\omega_e} & 
D_eY \ar[r]^u \ar@{=}[d] &
Z \ar[r]^v \ar@{.>}[d]^{h} &
TY  \ar[d]^{\delta_e\veps T\omega_e} \\
D_eD_eY \ar[r]^{\delta_e D_e g} &
D_eY \ar[r]^{-TD_ev} &
D_{e+1}Z \ar[r]^{TD_eu} &
TD_eD_eY
}
\end{equation}
in which the top and buttom rows are distinguished triangles in $\calD $
and   $h$ is an isomorphism satisfying $h=\veps\omega_{e+1}\circ D_{e+1}h$.
We saw in Step~1.2 that $Z\in \calC^0(A_\frakq)\subseteq \calD_{e+1}$.
Thus, 
according to \cite[Definitions~5.16, 2.10]{Balmer_2000_Triangular_Witt_I},
$(Z,h)$ represents the image of $[Y,g]$ under the map $\partial_e: W^e_\veps(\calD_e/\calD_{e+1})\to 
W^{e+1}_\veps(\calD_{e+1})$
(see Section~\ref{sec:GW-Balmer}). Consequently, $(Z,h)$ represents
$\dd_e[Y,g]\in W^{e+1}(\calD_{e+1}/\calD_{e+2})$.  (In fact, $\calD_{e+2}=0$ because
we assume that $R$ is local with maximal ideal $\frakq$.)
Since $ Z\in\calC^0(A_\frakq)$,
it follows that $(Z,h)=(Z_\frakq,h_\frakq)$ 
represents the image of $\dd_e[Y,g]$ in $W_\veps(\calC^0(A_\frakq),D_e,\omega_e)$,
see Construction~\ref{CN:GW-isomorphism}. Consequently, with notation
as in \eqref{EQ:dfn-of-Fp} (and $K$ as in \ref{item:K}),  we have 
\[\tilde{F}^{(K)}[Z,h] = s'_{e+1}\dd_e[Y,g]=s'_{e+1}\dd_e s'^{-1}_e[V,f_\frakp].\]

Unfolding the definitions, $\tilde{F}^{(K)}(Z,h)$ is the $\veps$-hermitian space $(\Hom(K_A,Z),h_1)$,
where $\hat{h}_1:\Hom(K_A,Z)\times \Hom(K_A,Z)\to A\otimes\tilde{k}(\frakq)$ is given by
\[
\hat{h}_1(\alpha,\beta)
=(\Phi^{(K)})^{-1}(D_{e+1}\alpha\circ \veps h\circ \beta)
\]
for all $\alpha,\beta\in \Hom(K_A,Z)$.

\medskip

\Step{3.2} We now 
show that $\hat{h}_1 = -\what{\partial f}_2$. By
Steps~2.4 and~3.1, this would imply
that   $s'_{e+1}\dd_e s'^{-1}_e[V,f_\frakp]=-\partial_{\frakp,\frakq}[V,f_\frakp]$,
completing the proof.

Let $\alpha,\beta\in \Hom(K_A,Z)$, and let $\vphi,\psi\in \Hom(L_A,Y)$ be elements
satisfying \eqref{EQ:alpha-psi-rel}. By Steps~2.4 and~3.1, checking
that $\hat{h}_1(\alpha,\beta) = -\what{\partial f}_2(\alpha,\beta)$
amounts to showing that
\[
(\Phi^{(K)})^{-1}(D_{e+1}\alpha\circ \veps h\circ \beta)
=- \veps (\Phi^{(K)})^{-1}  (p^\diamond)^{-1}  (D_{e+1}q\circ D_e(\res\vphi)\circ \psi ),
\]
or equivalently, that
\begin{equation}\label{EQ:final-eq}
D_{e+1}\alpha\circ \veps h\circ \beta\circ p
= -
D_{e+1}q\circ D_e(\res\vphi)\circ \psi .
\end{equation}

Consider the   diagram
\[
\xymatrix{
J_A \ar[r]^\iota \ar[d]^{\res \vphi} &
L_A \ar[r]^{p} \ar[d]^{\vphi} & 
K_A \ar[r]^{q} \ar@{.>}[d]^{\alpha'} & 
TJ \ar[d]^{T(\res \vphi)} \\
Y \ar[r]^g &
D_e Y \ar[r]^u &
Z \ar[r]^v &
TY
}
\]
in which the left square is \eqref{EQ:res-dfn},
and the top and bottom rows are the distinguished triangles
\ref{item:J-L-K-A-triangle} and~\eqref{EQ:exact-Y-Z-triangle}.
Since $\calD$ is a triangulated category,
there exists $\alpha'\in\Hom(K_A,Z)$ which makes the diagram commute. 
By the middle square and \eqref{EQ:alpha-psi-rel},
we have $p^\diamond \alpha' =u\circ \vphi=p^\diamond \alpha$.
Lemma~\ref{LM:F-H0-equiv-lem} tells us that $p^\diamond:\Hom(K_A,Z)\to \Hom(L_A,Z)$
is bijective, so 
$\alpha'=\alpha$.
Plugging this into the right square of the diagram gives
$T(\res\vphi)\circ q= v\circ \alpha$.
Applying $D_{e+1}$ to both sides, we get  
\[D_{e+1}q\circ D_e(\res \vphi)=D_{e+1}\alpha\circ D_{e+1} v.\]
Using this, the middle square of \eqref{EQ:map-between-triangles}, and \eqref{EQ:alpha-psi-rel}, 
we get
\begin{align*}
-D_{e+1}q\circ D_e(\res \vphi)\circ \psi
&= -D_{e+1}\alpha\circ D_{e+1} v\circ \psi= D_{e+1}\alpha\circ h\circ u \circ \psi\\
&=D_{e+1}\alpha\circ h \circ \beta \circ p,
\end{align*}
so we proved \eqref{EQ:final-eq}. This completes the proof. 
\end{proof}

\section{Azumaya Algebras of Index $2$}
\label{sec:exactness-more}

Let $(A,\sigma)$ be an Azumaya algebra with involution
over a regular ring $R$, and let $\veps\in\mu_2(R)$.
We finish by
applying Theorem~\ref{TH:GW-well-defined} to prove  the exactness of $\aGW{A,\sigma,\veps}$
when $R$ is semilocal of dimension $\leq 3$, $\ind A=2$ and $\sigma$ is orthogonal or symplectic.
The proof will combine the exact octagon
of \cite{First_2022_octagon},
which was shown to be compatible with the differentials of $\aGW{A,\sigma,\veps}$
in \cite[\S6]{Bayer_2019_Gersten_Witt_complex_prerprint},
together with  the \emph{Gersten--Witt spectral
sequence}, introduced by Balmer and Walter \cite{Balmer_2002_Gersten_Witt_complex} 
in the case $(A,\sigma,\veps)=(R,\id_R,1)$,
and by Gille \cite{Gille_2007_hermitian_GW_complex_I}, \cite{Gille_2009_hermitian_GW_complex_II}
in general. Theorem~\ref{TH:GW-well-defined} is what allows
us to use both of these tools at the same time.

\begin{thm}\label{TH:exactness-dim-three-ind-two}
	Assume $R$ is a regular semilocal domain of dimension $\leq 3$.
	If $\ind A\leq 2$ and $\sigma$ is orthogonal or symplectic, then
	$\aGW{A,\sigma,\veps}$ is exact.
\end{thm}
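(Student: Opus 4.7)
The plan is to bring two independent inputs into contact via Theorem~\ref{TH:GW-well-defined}. The first input is the eight-periodic exact octagon of Witt groups from \cite{First_2022_octagon}. Under the hypothesis $\ind A\leq 2$ and $\sigma$ orthogonal or symplectic, the remaining vertices of the octagon are Witt groups with \emph{commutative} coefficients---over the center $\Cent(A)$ or its $\sigma$-fixed subring, each of which is finite \'etale over $R$ and hence itself regular semilocal of dimension $\leq 3$. By \cite[\S6]{Bayer_2019_Gersten_Witt_complex_prerprint}, the octagon is compatible with the second-residue differentials, so it lifts to an eight-periodic exact sequence of cochain complexes in which one entry is $\aGW{A,\sigma,\veps}$ and the remaining entries are Gersten--Witt complexes of commutative regular semilocal rings. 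The latter are exact (in dimension $\leq 4$) by Balmer--Preeti \cite{Balmer_2005_shifted_Witt_groups_semilocal}, and a diagram chase in the associated long exact sequence of cohomology then forces the cohomology of $\aGW{A,\sigma,\veps}$ to vanish in positive degrees.

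The second input, the Gersten--Witt spectral sequence of Balmer--Walter and Gille, lives naturally on $\iaGW{A,\sigma,\veps}$ and converges, for $R$ semilocal, to the Witt groups $W^n_\veps(\bDer{A})$. Under the identification $\iaGW{A,\sigma,\veps}\cong\aGW{A,\sigma,\veps}$ of Theorem~\ref{TH:GW-well-defined}, the $E_2$-page of this spectral sequence is the cohomology of $\aGW{A,\sigma,\veps}$, and the abutment is exactly the collection of Witt groups that sit at the vertices of the octagon. It is this compatibility---the cohomology of the complex abuts to the very Witt groups entering the octagon---that allows both inputs to be used in tandem: the octagon controls the abutment, and the spectral sequence transports that control back into vanishing of cohomology at the $E_2$-level.

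To execute the proof I would first separate $\ind A=1$ from $\ind A=2$. When $\ind A=1$, a Morita reduction compatible with the involution identifies $(A,\sigma)$ with a finite \'etale (hence regular semilocal of dimension $\leq 3$) extension of $R$ carrying an involution, possibly of the second kind; Proposition~\ref{PR:base-of-GW-does-not-matter} together with Morita invariance of Witt groups and Gersten--Witt complexes then reduces the statement to the known commutative dimension $\leq 4$ result of Balmer--Preeti. When $\ind A=2$, I would write down the octagon of \cite{First_2022_octagon} explicitly, invoke the compatibility of \cite[\S6]{Bayer_2019_Gersten_Witt_complex_prerprint} to promote it to an exact sequence of complexes, and perform the diagram chase using exactness of the neighbouring Gersten--Witt complexes---each of which is either commutative, or already handled by the $\dim R\leq 2$ case of \cite{Bayer_2019_Gersten_Witt_complex_prerprint} (since after the octagon step the relevant residue complex effectively has shorter length).

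The main obstacle is the bookkeeping: determining precisely which Witt groups and which commutative rings appear at each vertex of the octagon under the hypothesis $\ind A\leq 2$, and then checking that each neighbouring complex is exact in the degree range needed for the diagram chase to conclude vanishing of $\HH^i(\aGW{A,\sigma,\veps})$ for every $i$ up to $\dim R=3$. One must also be attentive to the symplectic-versus-orthogonal dichotomy, since the octagon permutes the two types $\veps\in\mu_2(R)$, and the spectral-sequence abutment must be matched up correctly with the vertex it represents in the octagon of complexes.
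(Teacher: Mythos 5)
You correctly identify the two tools that must be combined---the octagon of \cite{First_2022_octagon} promoted to a sequence of complexes via \cite[\S6]{Bayer_2019_Gersten_Witt_complex_prerprint}, and the Gersten--Witt spectral sequence, with Theorem~\ref{TH:GW-well-defined} as the bridge---but the execution has a genuine gap at the decisive step. After the Morita-theoretic reduction (which is \cite[Theorem~8.7]{Bayer_2019_Gersten_Witt_complex_prerprint}, reducing to $\deg A=1$ or to $\deg A=2$ with a quaternion-type presentation, not merely a case split on $\ind A$), the octagon degenerates to a seven-term exact sequence of complexes whose five ``other'' columns are Gersten--Witt complexes of a quadratic \'etale \emph{subalgebra} $B\subsetneq A$ with its standard involution $\tau_1$ or with $\tau_2=\id_B$. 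These are not complexes over $\Cent(A)=R$ or its fixed ring, and the $\tau_1$-columns are unitary, so their exactness is not Balmer--Preeti but \cite[Theorem~9.4]{Bayer_2019_Gersten_Witt_complex_prerprint}; your remark that some columns are ``already handled by the $\dim R\le 2$ case'' because the residue complex ``effectively has shorter length'' has no basis, since $B$ has the same dimension as $R$. More importantly, even granting exactness of all five neighbouring columns, the diagram chase does \emph{not} force $\HH^i(\aGW{A,\sigma,\pm1})=0$: it only relates the cohomologies of the two $A$-columns to one another, and it needs seeds of vanishing to propagate.

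The paper supplies exactly two seeds, and your proposal produces neither. One is $\HH^3(\aGW{A,\sigma,\pm1})=0$, obtained from Gille's top-degree exactness result for $\iaGW{A,\sigma,\pm1}$ and transported through Theorem~\ref{TH:GW-well-defined}. The other is $\HH^0(\aGW{A,\sigma,\pm1})=0$, and this is where the spectral sequence actually enters: since $\dim R\le 3$ and the third column vanishes, the spectral sequence degenerates at $E_2$ and yields a short exact sequence $0\to \HH^2(\GW{A,\sigma,\pm1})\to W_{\mp1}(A,\sigma)\to \HH^0(\GW{A,\sigma,\mp1})\to 0$ in which the surjection is the localization map $\dd_{-1}$; that surjectivity is precisely $\HH^0(\aGW{A,\sigma,\mp1})=0$. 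Your proposed mechanism---``the octagon controls the abutment, and the spectral sequence transports that control back into vanishing at the $E_2$-level''---is not an argument: the abutment consists only of the shifted Witt groups $W^n_{+1}(A,\sigma)$, i.e.\ essentially $W_{\pm1}(A,\sigma)$ (the $B$-vertices of the octagon do not occur in it), and the octagon does not compute these groups, so no vanishing can be ``transported back.'' With the two seeds in hand, repeated application of \cite[Lemma~9.2]{Bayer_2019_Gersten_Witt_complex_prerprint} to the seven-term sequence closes the argument; without them, your chase does not close.
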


\begin{proof}
	By \cite[Theorem~8.7]{Bayer_2019_Gersten_Witt_complex_prerprint},
	it is enough to consider the following two cases:
	\begin{enumerate}[label=(\arabic*)]
		\item $\deg A=1$;
		\item $\deg A=2$,
		and there are $\lambda,\mu\in \units{A}$
		such that $\lambda^\sigma=-\lambda$, $\mu^\sigma=-\mu$, $\lambda\mu=-\mu\lambda$
		and $\lambda^2\in \Cent(A)$.
	\end{enumerate} 
	Since $\sigma$ is orthogonal or symplectic, $\Cent(A)=R$,
	and so
	case (1) is  covered
	by \cite[Corollary~10.4]{Balmer_2002_Gersten_Witt_complex}
	and \cite[p.~3]{Balmer_2005_shifted_Witt_groups_semilocal}.
	We proceed with case (2).

	Define   $B,\tau_1,\tau_2$
	as in \cite[\S6]{Bayer_2019_Gersten_Witt_complex_prerprint}. Then $B$ is a quadratic
	\'etale $R$-subalgebra of $A$, $\tau_1$ is its standard $R$-involution
	and
	$\tau_2=\id_B$. Moreover,   $A=B\oplus \mu B$ and $B=\Cent(B)=R\oplus \lambda R$,
	so the set $\{1,\lambda,\mu,\mu\lambda\}$ is an $R$-basis of $A$.
	Since $\sigma$ maps $\lambda$, $\mu$ and $\mu\lambda$ to their negatives,
	$\sigma$ is necessarily symplectic (but $\veps$ can be $1$ or $-1$). 
	
	By \cite[Theorem~6.2]{Bayer_2019_Gersten_Witt_complex_prerprint},
	the data of $A,\sigma,\lambda,\mu$ gives rise to an $8$-periodic exact 
	sequence of Gersten-Witt  complexes as defined in Section~\ref{sec:GW-second-res}.
	Since   $\aGW{B,\tau_2,-1}=0$, it degenerates
	into a $7$-term exact sequence:
	\begin{align}\label{EQ:seven-term-seq}
	0  \to \,&
	\aGW{A,\sigma,1}\to
	\aGW{B,\tau_1,1} \to
	\aGW{A,\sigma,-1} \to
	\aGW{B,\tau_2,1} \to \\
	&\aGW{A,\sigma,-1}\to
	\aGW{B,\tau_1,-1} \to 
	\aGW{A,\sigma,1} \to	
	0 .\nonumber
	\end{align}
	We view this sequence as a double cochain complex.
	The columns $\aGW{B,\tau_1,\pm 1}$ and $\aGW{B,\tau_2,1}$
	are exact by \cite[Theorem~9.4]{Bayer_2019_Gersten_Witt_complex_prerprint}, and we
	have $\HH^{3}(\aGW{A,\sigma,\pm 1})=0$
	by  \cite[Theorem~5.1]{Bayer_2019_Gersten_Witt_complex_prerprint},
	or \cite[Theorem~8.4]{Gille_2020_hermitian_Springer} (applies
	to $\iaGW{A,\sigma,\pm1}$) and Theorem~\ref{TH:GW-well-defined}.
	
	Recall
	that $\GW{A,\sigma,\veps}$ denotes $\aGW{A,\sigma,\veps}$
	with the $-1$-term removed.
	The Gersten--Witt spectral sequence
	associated to $(A,\sigma)$, illustrated in Figure~\ref{FG:spectral-seq-of-GW}, 
	converges to $\{W^n_{+1}(A,\sigma)\}_{n\in\Z}$ and thus
	gives rise to an exact sequence
	\[
	0\to \HH^2(\GW{A,\sigma,\pm 1})\to W_{\mp1}(A,\sigma) \xrightarrow{(*)}\HH^0(\GW{A,\sigma,\mp1})\to 0
	\]
	in which $(*)$ is the localization map   $W_{\mp 1}(A,\sigma)\to \HH^0(\GW{A,\sigma,\mp 1})
	\subseteq \GW{A,\sigma,\mp 1}_0$, i.e., $\dd_{-1}$. 
	This means
	that $\HH^0(\aGW{A,\sigma,\pm 1})=0$.
	Note that
	here and in Figure~\ref{FG:spectral-seq-of-GW}, 
	we have implicitly used the isomorphism $\iaGW{A,\sigma,\veps}\cong \aGW{A,\sigma,\veps}$
	of Theorem~\ref{TH:GW-well-defined}.

	Now, repeated application of \cite[Lemma~9.2]{Bayer_2019_Gersten_Witt_complex_prerprint}
	to \eqref{EQ:seven-term-seq} 
	(which is essentially diagram chasing)	
	shows that $\aGW{A,\sigma, \pm 1}$
	is exact. Specifically, apply this lemma to assert the nullity
	of all cohomology groups in the following order:
	$\HH^{-1}(\aGW{A,\sigma,  1})=0$, $\HH^{1}(\aGW{A,\sigma,   1})=0$,
	$\HH^{2}(\aGW{A,\sigma,  1})=0$,
	$\HH^{-1}(\aGW{A,\sigma, - 1})=0$,
	$\HH^{1}(\aGW{A,\sigma,  -1})=0$,
	$\HH^{2}(\aGW{A,\sigma, - 1})=0$.
\end{proof}

\begin{figure}[h]
	\[
	\begin{array}{llllll}
	\vdots & \vdots & \vdots & \vdots &   \\
	\HH^0(\GW{A,\sigma,-1}) & \HH^1(\GW{A,\sigma,-1}) & \HH^2(\GW{A,\sigma,-1}) &  0 & \cdots \\
	0 & 0 & 0 &   0& \cdots \\
	\HH^0(\GW{A,\sigma,1}) & \HH^1(\GW{A,\sigma,1}) & \HH^2(\GW{A,\sigma,1}) &  0& \cdots \\
	0 & 0 & 0 &   0~~& \cdots \\
	\HH^0(\GW{A,\sigma,-1}) & \HH^1(\GW{A,\sigma,-1}) & \HH^2(\GW{A,\sigma,-1}) & 0& \cdots \\
	\vdots & \vdots & \vdots & \vdots & 
	\end{array}
	\]
	\caption{The $E_2$ page of the Gersten--Witt spectral sequence of $(A,\sigma)$.
	The term $\HH^0(\GW{A,\sigma,1})$ is in the $(0,0)$-coordinate.}
	\label{FG:spectral-seq-of-GW}
	\end{figure}

\begin{remark}\label{RM:why-both}
	It is not straightforward to 
	prove Theorem~\ref{TH:exactness-dim-three-ind-two} directly using only
	the
	Gersten--Witt complex $\iaGW{A,\sigma,\veps}$ of Balmer--Walter 
	and Gille. 
	Letting $A,\sigma,\tau_1,\tau_2$ be as in the proof of Theorem~\ref{TH:exactness-dim-three-ind-two},
	the functoriality of $\iaGW{A,\sigma,\veps}$ with
	respect to hermitian functors does give rise
	to a $7$-term sequence of cochain complexes
	analogous to \eqref{EQ:seven-term-seq}:
	\begin{align}\label{EQ:seven-term-seq-B}
	0  \to \,&
	\iaGW{A,\sigma,1}\to
	\iaGW{B,\tau_1,1} \to
	\iaGW{A,\sigma,-1} \to
	\iaGW{B,\tau_2,1} \to \\
	&\iaGW{A,\sigma,-1}\to
	\iaGW{B,\tau_1,-1} \to 
	\iaGW{A,\sigma,1} \to	
	0  \nonumber
	\end{align}
	but it is {\it a priori} not clear
	that \eqref{EQ:seven-term-seq-B} 
	is exact in levels $1$ and above, and the exactness
	at all levels is needed in the last paragraph of the proof. 
	
	In more detail,
	let $e\geq 0$  and let us identify
	$\iaGW{A,\sigma,\veps}$ with $\bigoplus_{\frakp\in R^{(e)}}\tilde{W}_\veps(A(\frakp))\cong
	\bigoplus_{\frakp\in R^{(e)}}W_\veps(A(\frakp),\sigma(\frakp))$ via
	the isomorphism constructed by Balmer--Walter \cite{Balmer_2002_Gersten_Witt_complex}
	for $A=R$, or Gille \cite{Gille_2007_hermitian_GW_complex_I}, \cite{Gille_2009_hermitian_GW_complex_II}
	in general.
	One   can show that  the $e$-th level of \eqref{EQ:seven-term-seq-B}
	decomposes as a direct sum of $7$-term   sequences $\bigoplus_{\frakp\in R^{(e)}}S(\frakp)$. 
	It is expected  that for each $\frakp\in R^{(e)}$, the sequence  $S(\frakp)$ is isomorphic to
	the exact sequence 
	\begin{align}\label{EQ:seven-term-seq-P}
	0 & \to   
	W_1(A(\frakp),\sigma(\frakp))\to
	W_1(B(\frakp),\tau_1(\frakp)) \to
	W_{-1}(A(\sigma),\sigma(\frakp)) \to
	W_1(B(\frakp),\tau_2(\frakp))\\ 
	&\to   W_{-1}(A(\frakp),\sigma(\frakp)) \to
	W_{-1}(B(\frakp),\tau_1(\frakp)) \to 
	W_1(A(\frakp),\sigma(\frakp)) \to	\nonumber
	0   
	\end{align}
	considered in \cite[Corollary~8.2]{First_2022_octagon} and earlier
	in Lewis \cite{Lewis_1982_improved_exact_sequences}, but when $e\geq 1$,
	it is not  immediate that the maps in $S(\frakp)$
	correspond to the maps appearing in \eqref{EQ:seven-term-seq-P},
	so
	we cannot assert that
	$S(\frakp)$ is exact without further work.
	(In fact, by crunching through   Construction~\ref{CN:GW-isomorphism},
	it is possible
	to show that \eqref{EQ:seven-term-seq} and \eqref{EQ:seven-term-seq-B}
	are isomorphic and as a consequence assert the exactness of  \eqref{EQ:seven-term-seq-B}
	in levels $1$ and above.)
\end{remark}

\bibliographystyle{plain}
\bibliography{MyBib_18_05}

\end{document}